\theoremstyle{plain}
\newtheorem{thm}{Theorem}[section]
\newtheorem{lem}[thm]{Lemma}
\newtheorem{prop}[thm]{Proposition}
\newtheorem{cor}[thm]{Corollary}
\theoremstyle{definition}
\newtheorem{dfn}[thm]{Definition}
\newtheorem{ex}[thm]{Example}
\theoremstyle{remark}
\newtheorem{rmk}[thm]{Remark}
\newcommand{\cF}{\mathcal{F}}
\newcommand{\veps}{\varepsilon}
\DeclareRobustCommand{\stirling}{\genfrac\{\}{0pt}{}}
\DeclareMathOperator{\Tr}{Tr}
\DeclareMathOperator{\Hom}{Hom}
\DeclareMathOperator{\Endo}{End}
\DeclareMathOperator{\im}{im}
\DeclareMathOperator{\SL}{SL}
\DeclareMathOperator{\Res}{Res}
\DeclareMathOperator{\id}{id}
\newcommand*{\df}{\mathrel{\vcenter{\baselineskip0.5ex \lineskiplimit0pt
                     \hbox{\scriptsize.}\hbox{\scriptsize.}}} =}
\providecommand{\abs}[1]{\left\lvert#1\right\rvert}
\providecommand{\pseries}[2]{#1[\![ #2 ]\!]}
\newcommand{\Qp}{\mathbf{Q}_p}
\newcommand{\Zp}{\mathbf{Z}_p}
\newcommand{\QQ}{\mathbf{Q}}
\newcommand{\CC}{\mathbf{C}}
\newcommand{\ZZ}{\mathbf{Z}}
\newcommand{\RR}{\mathbf{R}}
\newcommand{\bone}{\mathbf{1}}
\DeclareMathOperator{\kVer}{\mathbf{k-Ver}}
\numberwithin{equation}{section}
\begin{document}
\title{$p$-adic vertex operator algebras}
\author{Cameron Franc}
\address{Department of Mathematics and Statistics, McMaster University}
\email{franc@math.mcmaster.ca}
\author{Geoffrey Mason}
\address{Department of Mathematics, UC Santa Cruz}
\email{gem@ucsc.edu}
\thanks{The authors were supported by an NSERC grant and by the Simon Foundation, grant $\#427007$, respectively.\\
MSC(2020): 11F85, 17B69, 17B99, 81R10.}
\date{}

\begin{abstract} We postulate axioms for a chiral half of a nonarchimedean $2$-dimensional bosonic conformal field theory, that is, a vertex operator algebra
in which a $p$-adic Banach space replaces the traditional Hilbert space. We study some consequences of our axioms leading to the construction of various examples, including $p$-adic commutative Banach rings and $p$-adic versions of the Virasoro, Heisenberg, and the Moonshine module vertex operator algebras. Serre $p$-adic modular forms occur naturally in some of these examples as limits of classical $1$-point functions.
\end{abstract}
\maketitle
\tableofcontents

\section{Introduction}
In this paper we introduce a new notion of $p$-adic vertex operator algebra. The axioms of $p$-adic VOAs arise from $p$-adic completion of the usual or, as we call them in this paper, \emph{algebraic} VOAs with suitable integrality properties. There are a number of reasons why this direction is worth pursuing, both mathematical and physical. From a mathematical perspective, many natural and important VOAs have integral structures, including the Monster module of \cite{FLM}, whose integrality properties have been studied in a number of recent papers, including \cite{Carnahan, DongGriess1, DongGriess2, DongGriess3}. Completing such VOAs with respect to the supremum-norm of an integral basis as in Section \ref{s:completion} below provided the model for the axioms that we present here. In a related vein, the last several decades have focused attention on various aspects of algebraic VOAs over finite fields. Here we can cite \cite{Borcherds, BorcherdsRyba, DongRen1, DongRen2, LiMu1, LiMu2, Ryba}. It is natural to ask how such studies extend to the $p$-adic completion, and this paper provides a framework for addressing such questions.

In a slightly different direction, $p$-adic completions have been useful in a variety of mathematical fields  adjacent to the algebraic study of VOAs. Perhaps most impressive is the connection between VOAs and modular forms, discussed in \cite{DLMModular, FMCharacters, MT2, Zhu}. It is natural to ask to what extent this connection extends to the modern and enormous field of $p$-adic modular forms, whose study was initiated in \cite{Katz, Serre2} and which built on earlier work of many mathematicians. We provide some hints of such a connection in Sections \ref{s:heisenberg} and \ref{s:monster} below.

Finally on the mathematical side, local-global principles have been a part of the study of number-theoretic problems for over a century now, particularly in relation to questions about lattices and their genera.\ One can think of algebraic VOAs as enriched lattices (see \cite{FMCharacters} for some discussion and references on this point) and it is natural to ask to what extent local-global methods could be applied to the study of VOAs. For example, mathematicians still do not know how to prove that the Monster module is the unique holomorphic VOA with central charge $24$ and whose weight $1$ graded piece is trivial. Could the uniqueness of its $p$-adic completion be more accessible?

When considering how to apply $p$-adic methods to the study of vertex operator algebras, one is confronted with the question of which $p$ to use. In the algebraic and physical theories to date, most attention has been focused on the infinite, archimedean prime. A finite group theorist might answer that $p=2$ is a natural candidate. While focusing on $p=2$ could trouble a number theorist, there is support for this suggestion in recent papers such as \cite{Susskind} where the Bruhat-Tits tree for $\SL_2(\QQ_2)$ plays a prominent r\^{o}le. One might also speculate that for certain small primes, $p$-adic methods could shed light on  $p$-modular moonshine \cite{Borcherds, BorcherdsRyba, Ryba} and related constructions \cite{DHR}. However, a truly local-global philosophy would suggest considering all primes at once. That is, one might consider instead the adelic picture, an idea that has arisen previously in papers such as \cite{FreundWitten}. Examples of such objects are furnished by completions $\varprojlim_{n\geq 1} V/nV$ of vertex algebras $V$ defined over 
$\ZZ$. After extension by $\QQ$, such a completion breaks up into a restricted product over the various $p$-adic completions. Thus, the $p$-adic theory discussed below could be regarded as a first step toward a more comprehensive adelic theory. Other than these brief remarks, however, we say no more about the adeles in this paper.

There is a deep connection between the algebraic theory of VOAs and the study of physical fields connected to quantum and conformal field theory (CFT) which has been present in the algebraic theory of VOAs from its very inception. A `physical' $2$-dimensional CFT provides for a pair of Hilbert spaces of states, called the left- and right-moving Hilbert spaces, or  `chiral halves'. We will not need any details here; some of them are presented in \cite{Kac}, or for a more physical perspective one can consult \cite{FMS,Schottenloher}.

The current mathematical theory of vertex operator algebras is, with some exceptions, two steps removed from this physical picture: one deals exclusively with one of the chiral halves and beyond that the topology is rendered irrelevant by restricting to a dense subspace of states and the corresponding fields, which may be treated axiomatically and algebraically.

The $p$-adic theory we propose is but one step removed from the physical picture of left- and right-moving Hilbert spaces:  although we deal only with chiral objects, they are topologically complete. The metric is nonarchimedean and the Hilbert space is replaced by a $p$-adic Banach space. Thus our work amounts to an axiomatization of the chiral half of a nonarchimedean $2$-dimensional bosonic CFT.

 There is a long history of $p$-adic ideas arising in the study of string theory and related fields, cf. \cite{FreundOlson1, FreundWitten, Gubser, HuangStoicaYau, HungLiMelbyThompson, VVZ, Volovich} for a sample of some important works. In these papers the conformal fields of interest tend to be complex-valued as opposed to the $p$-adic valued fields discussed below. Nevertheless, the idea of a $p$-adic valued field in the sense meant by physicists has been broached at various times, for example in Section 4.4 of \cite{Marcolli}. 

The new $p$-adic axioms are similar to the usual algebraic ones, as most of the axioms of an algebraic VOA are stable under completion. For example, the formidable Jacobi identity axiom is unchanged, though it now involves infinite sums that need not truncate. On the other hand, in our $p$-adic story the notion of a \emph{field} in the sense of VOA theory must be adjusted slightly, and this results in a slightly weaker form of $p$-adic locality. Otherwise, much of the basic algebraic theory carries over to this new context. This is due to the fact that the strong triangle inequality for nonarchimedean metrics leads to analytic considerations that feel very much like they are part of algebra rather than analysis. Consequently, many standard arguments in the algebraic theory of VOAs can be adapted to this new setting.

The paper is organized as follows: to help make this work more accessible to a broader audience, we include in Sections \ref{s:primer} and \ref{s:banach} a recollection of some basic facts about algebraic VOAs and $p$-adic Banach spaces respectively. In Section \ref{s:padic1} we introduce our axioms for $p$-adic fields and $p$-adic vertex algebras and derive some basic results about them. In Section \ref{s:goddard} we establish $p$-adic variants of the Goddard axioms \cite{Goddard} as discussed  in \cite{Mason1, MatsuoNagatomo}. Roughly speaking, we show that a $p$-adic vertex algebra amounts to a collection of mutually $p$-adically local $p$-adic fields, a statement whose archimedean analog will be familiar to experts.\ In Section \ref{s:conformal} we discuss Virasoro structures, define $p$-adic vertex operator algebras, and constuct $p$-adic versions of the algebraic Virasoro algebra. Section \ref{s:completion} provides tools for constructing examples of $p$-adic VOAs via completion of algebraic VOAs. In Section \ref{s:locality} we elaborate on some aspects of $p$-adic locality and related topics such as operator product expansions. Finally, Sections \ref{s:heisenberg} and \ref{s:monster} use the preceding material and results in the literature to study examples of $p$-adic VOAs. In particular, we establish the following result:
\begin{thm}\label{thm1.1}
There exist $p$-adic versions of the Virasoro, Heisenberg and Monster VOAs.
\end{thm}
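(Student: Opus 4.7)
The plan is to construct each of the three $p$-adic VOAs as the completion of an integral form of a familiar algebraic VOA, relying on the machinery established in Section \ref{s:completion}. The underlying principle is that if $V$ is an algebraic VOA equipped with a $\ZZ_{(p)}$-form $V_{\ZZ_{(p)}}$ on which the state-field map has bounded denominators in a suitable sense, then the sup-norm completion of $V_{\ZZ_{(p)}} \otimes \ZZ_p$ with respect to an integral basis carries a natural $p$-adic VOA structure. Concretely, one verifies that the vacuum, the translation operator, and the Jacobi identity all extend continuously to the completion, using the strong triangle inequality to pass infinite sums through the axioms. Section \ref{s:completion} presumably packages this into a general completion functor from (integral) algebraic VOAs to $p$-adic VOAs, so the task reduces to exhibiting appropriate integral forms in each of the three cases.

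For the Virasoro VOA $\operatorname{Vir}(c)$ the work is largely done in Section \ref{s:conformal}, where the authors promise a construction of $p$-adic versions of the Virasoro algebra; one chooses a central charge $c \in \ZZ_p$ (and a highest weight $h \in \ZZ_p$ for the vacuum module) and takes the integral span of PBW monomials in the $L_{-n}$ applied to the vacuum. The only subtlety is that denominators can appear in the Virasoro bracket through the $\binom{n+1}{3}$ factor in the central term, but these are controlled, so after tensoring with $\ZZ_p$ the PBW basis provides an orthonormal Schauder basis for the completion. For the Heisenberg VOA, one takes the polynomial ring $\ZZ[h_{-1}, h_{-2}, \ldots]$ on which the operators $h_n$ with $n \geq 0$ act integrally via the canonical commutation relations; the state-field correspondence is built from normally-ordered products of the $h_n$, which preserve the $\ZZ$-lattice, and the completion is then a Tate algebra in infinitely many variables equipped with the extended vertex operators.

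The Monster module is the delicate case and will be the main obstacle. Here one must invoke the integral form of $V^\natural$ constructed and refined in the work of Dong--Griess and Carnahan cited in the introduction (\cite{Carnahan, DongGriess1, DongGriess2, DongGriess3}): this provides a $\ZZ$-form $V^\natural_{\ZZ}$ stable under the vertex operator map, at least after suitable rescaling on graded pieces. One completes $V^\natural_{\ZZ} \otimes \ZZ_p$ with respect to the sup-norm of an integral homogeneous basis to obtain a $p$-adic Banach space $\widehat{V^\natural}_p$. The nontrivial check is that the modes of $Y(v, z)$ for $v \in V^\natural_{\ZZ}$ remain uniformly bounded on this integral lattice, so that they extend continuously; this requires the explicit integrality bounds from the cited papers rather than any new estimate. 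Granted this, the Jacobi identity, translation covariance, and the Virasoro action all pass to the completion by continuity, and Section \ref{s:completion} then identifies the result as a $p$-adic VOA.

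In all three cases the Virasoro element survives the completion with integral coefficients (up to harmless denominators absorbable into the choice of lattice), so the resulting $p$-adic vertex algebras are in fact $p$-adic vertex operator algebras in the sense of Section \ref{s:conformal}. The hard part, as indicated above, is not the abstract completion procedure but rather invoking the literature carefully enough to guarantee integrality of the Monster vertex operators on a single global lattice; the Heisenberg and Virasoro constructions are essentially formal.
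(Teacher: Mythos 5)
Your proposal follows essentially the same route as the paper: each of the three examples is obtained by completing an integral form of the corresponding algebraic VOA with respect to the sup-norm of an integral graded basis, with the general transfer of structure handled by the completion results of Section \ref{s:completion} (in particular Proposition \ref{p:integralcompletion}), the Virasoro case via the $\ZZ_p$-vertex algebra $W/W_1$ with $c'\in\ZZ_p$, the Heisenberg case via the lattice $\ZZ_p[h_{-1},h_{-2},\dots]$ yielding $S_1$, and the Monster case via the Dong--Griess form. The only minor slips are attributional (the $\binom{m+1}{3}$ factor in the central term is already integral, the sole denominator being the $2$ absorbed into the quasicentral charge, and the paper works with the vacuum module rather than a general highest weight $h$), neither of which affects the argument.
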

Perhaps as interesting 
 is the fact that in the second and third cases mentioned in Theorem \ref{thm1.1}, the character maps (i.e., $1$-point correlation functions, or graded traces)  for the VOAs extend by continuity to character maps giving rise to $p$-adic modular forms as defined in \cite{Serre2}. A noteworthy fact about the Heisenberg algebra is that while the quasi-modular form $E_2/\eta$ is the graded trace of a state in the Heisenberg algebra, if we ignore the factor of $\eta^{-1}$, then for odd primes $p$ this is a genuine $p$-adic modular form \`{a} la Serre. In this sense, the $p$-adic modular perspective may be \emph{more} attractive than the algebraic case where one must incorporate quasi-modular forms into the picture. See Sections \ref{s:heisenberg}, \ref{s:kummer} and \ref{s:monster} for more details on these examples. In particular, in Section \ref{s:kummer} we show, among other things, that the image of the $p$-adic character map for the $p$-adic Heisenberg algebra contains the $p$-stabilized Eisenstein series of weight $2$,
\[
  G_2^* \df \frac{p^2-1}{24}+\sum_{n\geq 1}\sigma^*(n)q^n,
\]
with notation as in \cite{Serre2}, so that $\sigma^*(n)$ denotes the divisor sum function over divisors of $n$ coprime to $p$. It is an interesting problem to determine the images of these $p$-adic character maps, a question we hope to return to in the near future.

In a similar way, one can deduce the existence of $p$-adic versions of many well-known VOAs, such as lattice theories, and theories modeled on representations of affine Lie algebras (WZW models). However, since it is more complicated to formulate definitive statements about the ($p$-adic) characters of such objects, and in the interest of keeping the discussion to a reasonable length, we confine our presentation to the cases intervening in Theorem \ref{thm1.1}. We hope that they provide a suitable demonstration and rationale for the theory developed below.

The authors thank  Jeff Harvey and Greg Moore for comments on a prior version of this paper.

\subsection{Notation and terminology}\label{SSterm}
Researchers in number theory and conformal field theory have adopted a number of conflicting  choices of terminology. In this paper \emph{field} can mean algebraic field as in the rational, complex or $p$-adic fields. Alternately, it can be a field as in the theory of VOAs or physics. Similarly, \emph{local} can refer to local fields such as the $p$-adic numbers, or it can refer to physical locality, as is manifest in the theory of VOAs. Throughout this paper, these words generally take their physically-inspired meaning as in the theory of VOAs.
\begin{enumerate}
\item[---] If $A$, $B$ are operators, then $[A,B] = AB-BA$ is their commutator;
\item[---] For a rational prime $p$, $\QQ_p$ and $\ZZ_p$ are the rings of $p$-adic numbers and $p$-adic integers respectively;
\item[---] If $V$ is a $p$-adic Banach space, then $\cF(V)$ is the space of $p$-adic fields on $V$, cf. Definition \ref{d:pfield};
\item[---] The Bernoulli numbers $B_k$ are defined by the power series
\begin{eqnarray*}
\frac{z}{e^z-1} = \sum_{k\geq 0} \frac{B_k}{k!} z^k;
\end{eqnarray*}
\end{enumerate}

\section{Primer on algebraic vertex algebras}
\label{s:primer}
In this Section we review the basic theory of \emph{algebraic} vertex algebras over an arbitrary unital, commutative base ring $k$, and we variously call such a gadget a \emph{$k$-vertex algebra}, \emph{vertex $k$-algebra}, or \emph{vertex algebra over $k$}.  Actually, our main interests reside in the cases when $k$ is a field of characteristic $0$, or $k=\ZZ$, $\ZZ/p^n\ZZ$, or the $p$-adic integers $\ZZ_p$, but there is no reason not to work in full generality, at least at the outset.\ By \emph{algebraic}, we mean the standard mathematical theory of vertex algebras based on the usual Jacobi identity (see below) as opposed to $p$-adic vertex algebras as discussed in the present paper.\ Good references for the theory over $\CC$ are \cite{FBZ, LepowskyLi} (see also \cite{MT2} for an expedited introduction) and the overwhelming majority of papers in the literature are similarly limited to this case.\ The literature on $k$-vertex algebras for other $k$, especially when $k$ is \emph{not} a $\QQ$-algebra, is scarce indeed. There is some work \cite{DongGriess1, DongGriess2, DongGriess3} on the case when $k=\ZZ$ that we shall find helpful --- see also \cite{Borcherds, BorcherdsRyba, Carnahan, DongRen1, DongRen2, JiaoLiMu, LiMu1, LiMu2, Ryba}. A general approach to $k$-vertex algebras is given in \cite{Mason1}.

\subsection{\texorpdfstring{$k$}{k}-vertex algebras}
\begin{dfn}\label{kvertdef} An (algebraic) $k$-vertex algebra is a triple $(V, Y, \bone)$ with the following ingredients:
\begin{itemize}
\item $V$ is a $k$-module, often referred to as Fock space;
\item $Y: V\rightarrow \pseries{\Endo_k(V)}{z, z^{-1}}$ is $k$-linear, and we write $Y(v, z)\df \sum_{n\in\ZZ} v(n)z^{-n-1}$ for the map $v \mapsto Y(v,z)$;
\item $\bone\in V$ is a distinguished element in $V$ called the vacuum element.
\end{itemize}
The following axioms must be satisfied for all $u, v, w\in V$:
\begin{enumerate}

\item (Truncation condition): there is an integer $n_0$, depending on $u$ and $v$, such that $u(n)v=0$ for all $n>n_0$;
\item (Creativity): $u(-1)\bone = u$, and $u(n)\bone = 0$ for all $n\geq 0$;
\item (Jacobi identity): the following identity is satisfied for all triples of integers $(r, s, t)$:
  \begin{align}
      \label{algJI}
&\sum_{i\geq 0} \binom{r}{i} (u(t+i)v)(r+s-i)w\\
    =&\sum_{i\geq 0}(-1)^i\binom{t}{i}\left( u(r+t-i)v(s+i)w-(-1)^t v(s+t-i)u(r+i)w\right).\notag
  \end{align}
\end{enumerate}
\end{dfn}

Inasmuch as $u(n)$ is a $k$-linear endomorphism of $V$ called the $n^{th}$ mode of $u$, we may think of $V$ as a $k$-algebra equipped with a countable infinity of $k$-bilinear products $u(n)v$. The \emph{vertex operator} (or field) $Y(u, z)$ assembles the modes of $u$ into a formal distribution and we can use an obvious notation $Y(u, z)v \df\sum_{n\in\ZZ} u(n)vz^{-n-1}$. Then the truncation condition says that $Y(u, z)v\in \pseries{V[z, z^{-1}]}{z}$ and the creativity axiom says that $Y(u, z)\bone=u+O(z)$. By deleting $w$ everywhere in the Jacobi identity, equation \eqref{algJI} may be construed as an identity satisfied by modes of  $u$ and $v$.

Some obvious but nevertheless important observations need to be made. For any given triple $(u, v, w)$ of elements in $V$, the truncation condition ensures that
\eqref{algJI} is well-defined in the sense that both sides reduce to \emph{finite} sums. Furthermore, only integer coefficients occur, so that \eqref{algJI} makes perfectly good sense for any commutative base ring $k$.

It can be shown as a consequence of these axioms \cite{Mason1} that $\bone$ is a sort of identity element, and more precisely that $Y(\bone, z)=\id_V$.

Suppose that $U=(U, Y, \bone), V=(V, Y, \bone)$ are two $k$-vertex algebras. A \emph{morphism}$f: U\rightarrow V$ is a morphism of $k$-modules that preserves vacuum elements and all $n^{th}$ products. This latter statement can be written in the form $fY(u, z)=Y(f(u),z)f$ for all $u\in U$. $k$-vertex algebras and their morphisms form a category $\kVer$. A \emph{left-ideal} in $V$ is a $k$-submodule $A\subseteq V$ such that $Y(u, z)a\in \pseries{A[z, z^{-1}]}{z}$ for all $u\in V$ and all $a\in A$. Similarly, $A$ is a right-ideal if $Y(a, z)u\in\pseries{A[z, z^{-1}]}{z}$. A $2$-sided ideal is, of course, a $k$-submodule that is both a left- and a right-ideal. If $A\subseteq V$ is a $2$-sided ideal then the quotient $k$-module $V/A$ carries the structure of a vertex $k$-algebra with the obvious vacuum element and $n^{th}$ products. Kernels of morphisms $f: U\rightarrow V$ are $2$-sided ideals and there is an isomorphism of $k$-vertex algebras $U/\ker f\cong \im f$.

\subsection{\texorpdfstring{$k$}{k}-vertex operator algebras}
A definition of $k$-vertex operator algebras for a general base ring $k$ is a bit complicated \cite{Mason1}, so we will limit ourselves to the standard case where $k$ is a $\QQ$-algebra. This will suffice for later purposes where we are mainly interested in considering $k=\Qp$, which is a field of characteristic zero.

Recall the Virasoro Lie algebra with generators $L(n)$ for $n\in\ZZ$ together with a central element $\kappa$ satisfying the bracket relations 
\begin{equation}
  \label{altVir}
[L(m), L(n)]=(m-n)L(m+n) + \delta_{m, -n} \tfrac{(m^3-m)}{12}\kappa.
\end{equation}

In a $k$-vertex operator algebra the Virasoro relations \eqref{altVir} are blended into a $k$-vertex algebra as follows:
\begin{dfn}
  \label{kvertopalgdef} A $k$-vertex operator algebra is a quadruple $(V, Y, \bone, \omega)$ with the following ingredients:
\begin{itemize}
\item A $k$-vertex algebra $(V, Y, \bone)$;
\item $\omega\in V$ is a distinguished element called the Virasoro element, or conformal vector;
\end{itemize}
and the following axioms are satisfied:
\begin{enumerate}
\item $Y(\omega, z)= \sum_{n\in\ZZ} L(n)z^{-n-2}$ where the modes $L(n)$ satisfy the relations \eqref{altVir} with $\kappa=c\id_V$ for some scalar $c\in k$ called the central charge of $V$;
\item there is a direct sum decomposition $V=\oplus_{n\in\ZZ} V_n$ where
  \[V_n\df \{v\in V\mid L(0)v=nv\}\]
  is a finitely generated $k$-module, and $V_n=0$ for $n\ll 0$;
\item $[L(-1), Y(v, z)]=\partial_zY(v, z)$ for all $v\in V$.
\end{enumerate}
\end{dfn}

This definition deserves a lot of explanatory comment - much more than we will provide. That the modes $L(n)$ satisfy the Virasoro relations means, in effect, that $V$ is a module over the Virasoro algebra. Of course it is a very special module, as one sees from the last two axioms. Furthermore, because $(V, Y, \bone)$ is a $k$-vertex algebra, then the Jacobi identity \eqref{algJI} must be satisfied by the modes of all elements $u, v \in V$, including for example $u=v=\omega$. For $\QQ$-algebras $k$ this is a non-obvious but well-known fact. Indeed, it holds for any $k$, cf. \cite{Mason1}, but this requires more discussion than we want to present here, this being one of the reasons we limit our choice of base ring.

Despite their relative complexity, there are large swaths of $k$-vertex operator algebras, especially in the case when $k=\CC$ is the field of complex numbers. See the references above for examples.
\subsection{Lattices in \texorpdfstring{$k$}{k}-vertex operator algebras}
Our main source of examples of $p$-adic VOAs will be completions of algebraic VOAs with suitable integrality properties, codified in the existence of integral forms as defined below. This definition is modeled on \cite{DongGriess1}, but see also \cite{Carnahan} for a more general perspective on vertex algebras over schemes.

Let either $k = \QQ$ or $\QQ_p$ and let $A=\ZZ$ or $\ZZ_p$, respectively.
\begin{dfn}
  \label{d:lattice}
  An \emph{integral form} in a $k$-vertex operator algebra $V$ is an $A$-submodule $R\subseteq V$ such that
  \begin{enumerate}
\item[(i)] $R$ is an $A$-vertex subalgebra of $V$, in particular $\bone \in R$,
\item[(ii)] $R_{(n)}\df R \cap V_{(n)}$ is an $A$-base of $V_{(n)}$ for each $n$,
\item[(iii)] there is a positive integer $s$ such that $s\omega\in R$.
\end{enumerate}
\end{dfn}
Condition (i) above means in particular that if $v \in R$, then every mode $v(n)$ defines an $A$-linear endomorphism of $R$. In particular, if $R$ is endowed with the sup-norm in some graded basis, and if this is extended to $V$, then the resulting modes are uniformly bounded in the resulting $p$-adic topology, as required by the definition of a $p$-adic field, cf. part (1) of Definition \ref{d:pfield} below.

\subsection{Remarks on inverse limits}\label{SSprojlim}
Suppose now that we consider a $\ZZ_p$-vertex algebra $V$. For a positive integer $k$, $p^kV$ is a $2$-sided ideal in $V$, so that the quotient $V/p^kV$ is again a $\ZZ_p$-vertex algebra. (We could also consider this as a vertex algebra over $\ZZ/p^k\ZZ$.) We have canonical surjections of $\ZZ_p$-vertex rings $f^m_k: V/p^{m}V \rightarrow V/p^{k}V$ for $m\geq k$ and we may consider the inverse limit
\begin{equation}
\varprojlim V/p^kV.
\end{equation}

This is unproblematic at the level of $\ZZ_p$-modules. Elements of the inverse limit are sequences $(v_1, v_2, v_3, \ldots)$ such that $f^m_k(v_m)=v_k$ for all $m\geq k$. It is natural to define the $n^{th}$ mode of such a sequence to be
$(v_1, v_2, v_3, \ldots)(n)\df (v_1(n), v_2(n), v_3(n), \ldots)$. However, this device does not make the inverse limit into an algebraic vertex algebra over $\ZZ_p$  in general.\ This is because it is not possible, in the algebraic setting, to prove the truncation condition for such modal sequences --- see Section \ref{s:heisenberg} below for a concrete example.\ And without the truncation condition, the Jacobi identity becomes meaningless even though in some sense it holds formally.\  As we shall explain below, these problems can be overcome in a suitable $p$-adic setting by making use of the $p$-adic topology. Indeed, this observation provides the impetus for many of the definitions to follow.

\section{\texorpdfstring{$p$}{p}-adic Banach spaces}
\label{s:banach}
A \emph{Banach space} $V$ over $\Qp$, or $p$-adic Banach space, is a complete normed vector space over $\Qp$ whose norm satisfies the ultrametric inequality
\[
  \abs{x+y} \leq \sup(\abs{x},\abs{y})
\]
for all $x,y \in V$. See the encyclopedic \cite{BGR} for general background on nonarchimedean analysis with a view towards rigid geometry.  Following Serre \cite{Serre}, we shall assume that for every $v\in V$ we have $\abs{v} \in \abs{\Qp}$, so that under the standard normalization for the $p$-adic absolute value, we can write $\abs{v} = p^n$ for some $n\in\ZZ$ if $v\neq 0$. Note that we drop the subscript $p$ from the valuation notation to avoid a proliferation of such subscripts. Since $\Qp$ is discretely valued, this is the same as Serre's condition (N). We will often omit mention of this condition throughout the rest of this note. It is likely inessential and could be removed, for example if one wished to consider more general nonarchimedean fields.

A basic and well-known consequence of the ultrametric inequality in $V$ that we will use repeatedly is the following:
\begin{lem}
  \label{l:sums}
Let $v_n \in V$ be a sequence in a $p$-adic Banach space $V$. Then $\sum_{n=1}^\infty v_n$ converges if and only if $\lim_{n \to \infty} v_n=0$. $\hfill\Box$
\end{lem}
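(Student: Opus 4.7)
The plan is to treat the two directions separately and to exploit completeness of $V$ together with the strong triangle inequality.

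For the forward direction, I would use the standard observation that works in any normed vector space: if $S_N \df \sum_{n=1}^N v_n$ converges to some $S \in V$, then the partial sums form a Cauchy sequence, and in particular $v_N = S_N - S_{N-1} \to 0$ as $N \to \infty$. Nothing nonarchimedean is needed here.

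For the reverse direction, suppose $v_n \to 0$. First I would note that the ultrametric inequality extends by an easy induction from two summands to any finite sum: for any $v_{M'+1}, \ldots, v_M \in V$,
\[
\left|\sum_{n=M'+1}^{M} v_n\right| \leq \max_{M'+1 \leq n \leq M} |v_n|.
\]
Given $\varepsilon > 0$, pick $N$ so large that $|v_n| < \varepsilon$ for all $n \geq N$. Then for any $M > M' \geq N$ the displayed inequality gives $|S_M - S_{M'}| < \varepsilon$, so $(S_N)$ is Cauchy. Since $V$ is a Banach space, hence complete, the sequence $(S_N)$ has a limit in $V$, which by definition is $\sum_{n=1}^\infty v_n$.

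There is no real obstacle here; the only point worth flagging is that the proof relies on the ultrametric inequality applied to finite sums, which is exactly the feature that distinguishes nonarchimedean Banach spaces from their archimedean counterparts and makes this convergence criterion so much simpler than in the classical setting. Completeness of $V$ is of course essential for producing the actual limit from the Cauchy partial sums.
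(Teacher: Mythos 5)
Your proof is correct and is the standard argument; the paper itself omits the proof entirely (marking the lemma as a well-known consequence of the ultrametric inequality), and your two-direction argument --- Cauchy criterion for the forward implication, the finite-sum ultrametric bound plus completeness for the converse --- is exactly the expected one.
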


Proposition 1 of \cite{Serre} and the ensuing discussion shows that, up to continuous isomorphism, every $p$-adic Banach space can be described concretely as follows. Let $I$ be a set and define $c(I)$ to be the collection of families $(x_i)_{i\in I} \in \Qp^I$ such that $x_i$ tends to zero in the following sense: for every $\veps > 0$, there is a finite set $S\subseteq I$ such that $\abs{x_i} < \veps$ for all $i \in I\setminus S$.\ Then $c(I)$ has a well-defined supremum norm $\abs{x} = \sup_{i\in I}\abs{x_i}$.\ Notice that since the $p$-adic absolute value is discretely valued, this norm takes values in the same value group as $\Qp$.\ Serre shows that every $p$-adic Banach space is continuously isomorphic to a $p$-adic Banach space of the form $c(I)$.
\begin{dfn}
  \label{d:orthonormal}
  Let $V$ be a $p$-adic Banach space. Then a sequence $(e_i)_{i\in I} \in \Qp^I$ is said to be an \emph{orthonormal basis} for $V$ provided that every element $x \in V$ can be expressed uniquely as a sum $x = \sum_{i \in I} x_ie_i$ for $x_i \in \Qp$ tending to $0$ with $i$, such that $\abs{x} = \sup_{i\in I} \abs{x_i}$.
\end{dfn}
That every $p$-adic Banach space is of the form $c(I)$, up to isomorphism, is tantamount to the existence of orthonormal bases.

The underlying Fock spaces of  vertex algebras over $\QQ_p$ will be $p$-adic Banach spaces. To generalize the definitions of $1$-point functions to the $p$-adic setting, it will be desirable to work in the context of trace-class operators. We thus recall some facts on linear operators between $p$-adic Banach spaces.
\begin{dfn}
\label{d:continuous}
  If $U$ and $V$ are $p$-adic Banach spaces, then $\Hom(U,V)$ denotes the set of \emph{continuous} $\Qp$-linear maps $U \to V$. If $U=V$ then we write $\Endo(V) = \Hom(V,V)$.
\end{dfn}

The space $\Hom(U,V)$ is endowed with the usual \emph{sup-norm}
\[\abs{f} = \sup_{u \neq 0} \frac{\abs{f(u)}}{\abs{u}}.\]
Thanks to our hypotheses on the norm groups of $U$ and $V$ it follows that $\abs{f} = \sup_{\abs{x}\leq 1}\abs{f(x)}$. This norm furnishes $\Hom(U,V)$ with the structure of a $p$-adic Banach space.

\section{\texorpdfstring{$p$}{p}-adic fields and \texorpdfstring{$p$}{p}-adic vertex algebras}
\label{s:padic1}
\begin{dfn}
  \label{d:pfield}
  Let $V$ be a $p$-adic Banach space. A \emph{$p$-adic field} on $V$ associated to a state $a \in V$ consists of a series $a(z) \in \pseries{\Endo(V)}{z,z^{-1}}$ such that if we write $a(z) = \sum_{n\in \ZZ} a(n)z^{-n-1}$ then:
  \begin{enumerate}
  \item there exists $M\in\RR_{\geq 0}$ depending on $a$ such that $\abs{a(n)b}\leq M\abs{a}\abs{b}$ for all $n\in \ZZ$ and all $b \in V$;
  \item $\lim_{n\to \infty} a(n)b=0$ for all $b \in V$.
  \end{enumerate}
\end{dfn}

\begin{rmk}
Property (1) in Definition \ref{d:pfield} implies that the operators $a(n) \in \Endo(V)$ are uniformly $p$-adically bounded $\abs{a(n)} \leq M\abs{a}$ in their operator norms, defined above in Section \ref{s:banach} and recalled below. In particular, the modes $a(n)$ are continuous endomorphisms of $V$ for all $n$.\ Property (2) of Definition \ref{d:pfield} arises by taking limits of the truncation condition
(1) in Definition \ref{kvertdef}.
\end{rmk}

We single out a special case of Definition \ref{d:pfield}, of particular interest not only because of its connection with Banach rings (see below) but also because it is satisfied in most, if not all, of our examples.
\begin{dfn}
  \label{d:pfieldsn}
  Let $V$ be a $p$-adic Banach space. A $p$-adic field $a(z)$ on $V$ is called \emph{submultiplicative} provided that it satisfies the following qualitatively stronger version of (1):
    \begin{enumerate}
  \item[(1')]   $\abs{a(n)b}\leq \abs{a}\abs{b}$ for all $n\in \ZZ$ and all $a, b \in V$.
  \end{enumerate}
\end{dfn}

\begin{rmk}
In the theory of nonarchimedean Banach spaces, where each $a \in V$ defines a single multiplication by $a$ operator rather than an entire sequence of such operators, the analogues of Property (1) in both Definition \ref{d:pfield} and Definition \ref{d:pfieldsn} above are shown to be equivalent in a certain sense: the constant $M$ of Definition \ref{d:pfield} can be eliminated at a cost of changing the norm, but without changing the topology. See Proposition 2 of Section 1.2.1 in  \cite{BGR} for a precise statement. We suspect that the same situation pertains in the theory of $p$-adic vertex algebras.\ Most of the arguments below are independent of this choice of definition, and so we work mostly with the apparently weaker Definition \ref{d:pfield}.
\end{rmk}

In the following, when we refer to ($p$-adic) fields, we will always mean in the sense of Definition \ref{d:pfield}. If the intention is to refer to submultiplicative fields we will invariably say so explicitly. Notice that if $a(z)$ is a field, then we can define a norm
\[
  \abs{a(z)} = \sup_{n\in\ZZ} \abs{a(n)},
\]
where as usual $\abs{a(n)}$ denotes the operator norm:
\[
  \abs{a(n)} \df \sup_{\substack{b\in V\\ b\neq 0}} \frac{\abs{a(n)b}}{\abs{b}}.
\]

\begin{dfn}\label{dfnpadicfd}
  The space of $p$-adic fields on $V$ in the sense of Definition \ref{d:pfield}, endowed with the topology arising from the sup-norm defined above, is denoted $\cF(V)$, and $\cF_{s}(V)$ denotes the subset of submultiplicative fields.
\end{dfn}

\begin{prop}\label{propcFV}
 $\cF(V)$ is a Banach space over $\Qp$. 
\end{prop}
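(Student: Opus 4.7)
The plan is to verify three things: that $\cF(V)$ is a $\Qp$-linear subspace of $\pseries{\Endo(V)}{z,z^{-1}}$, that the indicated sup-norm is a nonarchimedean norm on it, and that $\cF(V)$ is complete. The first two items are essentially formal. Given $a(z),b(z) \in \cF(V)$ and $\lambda \in \Qp$, the modes of $a(z)+\lambda b(z)$ are $a(n)+\lambda b(n)$, and the ultrametric estimate $\abs{(a(n)+\lambda b(n))c} \leq \max(\abs{a(n)c},\abs{\lambda}\abs{b(n)c})$ immediately delivers both conditions (1) and (2) of Definition \ref{d:pfield} and shows the sup-norm is ultrametric. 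Definiteness and homogeneity of the norm are obvious; finiteness is precisely condition (1).

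For completeness, let $(a_k(z))_{k\geq 1}$ be a Cauchy sequence in $\cF(V)$. For each fixed $n\in\ZZ$, the trivial inequality $\abs{a_k(n)-a_l(n)}\leq \abs{a_k(z)-a_l(z)}$ shows that $(a_k(n))_k$ is Cauchy in $\Endo(V)$, which is itself a $p$-adic Banach space by Section \ref{s:banach}. Set $a(n)\df \lim_k a_k(n)$ and $a(z)\df \sum_{n\in\ZZ}a(n)z^{-n-1}$. Hypothesis (1) at the limit is easy: a Cauchy sequence is bounded, say $\abs{a_k(z)} \leq C$ for all $k$, and passing $k\to\infty$ gives $\abs{a(n)}\leq C$ uniformly in $n$. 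Convergence $a_k(z)\to a(z)$ in the sup-norm follows by passing $l\to\infty$ in the Cauchy estimate $\abs{a_k(n)-a_l(n)}\leq \abs{a_k(z)-a_l(z)}$, which yields $\abs{a_k(n)-a(n)}\leq \veps$ uniformly in $n$ whenever $k$ is sufficiently large.

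The main obstacle, and the only step requiring real care, is transferring the truncation condition (2) to the limit field: for every $b\in V$, showing $a(n)b \to 0$ as $n\to\infty$. The case $b=0$ is trivial, so assume $b\neq 0$. This is a standard two-parameter argument: given $\veps > 0$, choose $K$ with $\abs{a_K(z)-a_l(z)} < \veps/\abs{b}$ for all $l\geq K$, and pass $l\to\infty$ to obtain $\abs{(a_K(n)-a(n))b}\leq \veps$ \emph{uniformly in $n$}. Since $a_K(z)\in\cF(V)$, there exists $N$ with $\abs{a_K(n)b} < \veps$ for all $n\geq N$, and the ultrametric inequality then gives $\abs{a(n)b}\leq \max(\abs{a_K(n)b},\abs{(a_K(n)-a(n))b})\leq \veps$ for $n\geq N$. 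The essential point is that the uniform-in-$n$ control supplied by hypothesis (1) is exactly what allows the limits in $k$ and $n$ to be interchanged; a merely pointwise notion of field would not survive this limiting procedure, which is consistent with the remarks of Section \ref{SSprojlim} on why the algebraic truncation condition fails for naive inverse limits.
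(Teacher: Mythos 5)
Your proof is correct and follows essentially the same route as the paper's: reduce each modal sequence to a Cauchy sequence in $\Endo(V)$, transfer the uniform bound of condition (1) to the limit, and use the uniform-in-$n$ approximation by a single field $a_K(z)$ together with the ultrametric inequality to carry the truncation condition (2) to the limit. The only cosmetic differences are that you spell out the norm axioms and scalar multiplication explicitly and invoke boundedness of Cauchy sequences for condition (1), where the paper works with $\sup(\veps, M_j)$.
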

\begin{proof}
Clearly $\cF(V)$ is closed under rescaling. To show that it is closed under addition, let $a(z)$, $b(z) \in \cF(V)$ be $p$-adic fields.  Property (2) of Definition \ref{d:pfield} is clearly satisfied by the sum $a(z)+b(z)$. We must show that Property (1) holds as well, so take $c \in V$. Let $M_1$ and $M_2$ be the constants of Property (1) arising from $a(z)$ and $b(z)$, respectively. Then we are interested in bounding
  \[
  \abs{a(n)c+b(n)c}\leq \max(\abs{a(n)c},\abs{b(n)c}) \leq \max(M_1,M_2)\abs{c}.
\]
Thus Property (1) of Definition \ref{d:pfield} holds for $a(z)+b(z)$ with $M= \max(M_1,M_2)$. This verifies that $\cF(V)$ is a subspace of $\pseries{\Endo(V)}{z,z^{-1}}$.

It remains to prove that $\cF(V)$ is complete. Let $a_j(z)$ be a Cauchy sequence in $\cF(V)$. This means that for all $\veps > 0$, there exists $N$ such that for all $i,j > N$,
\[
  \sup_{n \in \ZZ} \abs{a_i(n)-a_j(n)} =\abs{a_i(z)-a_j(z)} < \veps.
\]
In particular, for each $n\in\ZZ$, the sequence $(a_j(n))_{j\geq 0}$ of elements of $\Endo(V)$ is Cauchy with a well-defined limit $a(n) \df \lim_{j\to \infty}a_j(n)$.

We shall show that $a(z) = \sum_{n\in\ZZ} a(n)z^{-n-1}$ is a $p$-adic field. Let $\veps > 0$ be given. By definition of the sup-norm on $p$-adic fields, there exists $N$ such that for all $j > N$ and all $n\in\ZZ$,
\[
  \abs{a(n)-a_j(n)} < \veps.
\]
Let $b \in V$. Then for any choice of $j > N$,
\[
  \abs{a(n)b} \leq \sup(\abs{a(n)b-a_j(n)b},\abs{a_j(n)b})\leq \sup(\veps,M_j)\abs{b}
\]
where $M_j$ is the constant of Property (1) in Definition \ref{d:pfield} associated to $a_j(z)$. It follows that $a(z)$ satisfies Property (1) of Definition \ref{d:pfield} with $M = \sup(\veps,M_j)$, which is indeed independent of $b \in V$.

Let $b \in V$ be nonzero and fixed. To show that $\lim_{n\to\infty} a(n)b=0$, let $\veps > 0$ be given, and choose $j$ such that $\abs{a(n)-a_j(n)} < \frac{\veps}{\abs{b}}$ for all $n \in \ZZ$. That is,
\[
  \sup_{b' \neq 0} \frac{\abs{a(n)b'-a_j(n)b'}}{\abs{b'}} < \frac{\veps}{\abs{b}}.
\]
Then as above we have
\[
  \abs{a(n)b} \leq \sup(\abs{a(n)b-a_j(n)b},\abs{a_j(n)b}) < \sup(\veps,\abs{a_j(n)b}).
\]
Since $a_j$ is a $p$-adic field, there exists $N$ such that for $n > N$, we have $\abs{a_j(n)b} < \veps$. We thus see that for $n > N$, we have $\abs{a(n)b} < \veps$. Therefore $\lim_{n\to \infty} a(n)b = 0$. This verifies that the limit $a(z)$ of the $p$-adic fields $a_j(z)$ is itself a $p$-adic field, and therefore $\cF(V)$ is complete.
\end{proof}

\begin{rmk}\label{rmkFs}  The set of fields $\cF_s(V)$ is not necessarily a linear subspace of $\cF(V)$, so there is no question of an analog of Proposition \ref{propcFV} for submultiplicative fields.
\end{rmk}
Now suppose that we have a continuous $\QQ_p$-linear map
\[
  Y(\bullet,z) \colon V \to \cF(V).
\]
In the submultiplicative case the continuity hypothesis is automatically satisfied, as in the following result.
\begin{lem}
  \label{l:Ycont}
  Let $Y(\bullet,z) \colon V \to \cF(V)$ be a linear map associating a \emph{submultiplicative} $p$-adic field $Y(a,z)$ to each state $a \in V$. Then $Y$ is necessarily continuous.
\end{lem}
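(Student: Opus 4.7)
The plan is to observe that submultiplicativity gives a direct norm bound on $Y(a,z)$ in terms of $|a|$, from which continuity (indeed, boundedness with operator norm at most $1$) follows immediately.

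More precisely, fix $a \in V$ and let $Y(a,z) = \sum_{n\in\ZZ} a(n)z^{-n-1}$. For each $n \in \ZZ$ and each nonzero $b \in V$, submultiplicativity (Definition \ref{d:pfieldsn}, Property (1')) gives
\[
\frac{|a(n)b|}{|b|} \leq |a|.
\]
Taking the supremum over nonzero $b$ yields $|a(n)| \leq |a|$ in operator norm, and then taking the supremum over $n$ gives
\[
|Y(a,z)| = \sup_{n\in\ZZ} |a(n)| \leq |a|
\]
in the sup-norm on $\cF(V)$ introduced just before Definition \ref{dfnpadicfd}. Since $Y$ is $\QQ_p$-linear, this uniform bound $|Y(a,z)| \leq |a|$ shows that $Y$ is a bounded linear operator from $V$ to $\cF(V)$, hence continuous.

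There is no real obstacle here: the submultiplicative axiom is precisely strong enough to make the operator norm of $Y$ at most $1$, whereas the weaker Definition \ref{d:pfield} only provides a state-dependent constant $M = M(a)$, which would not suffice to control $|Y(a,z)|$ uniformly in $a$. This gap is exactly the reason why the lemma is stated only for the submultiplicative case and why one needs a separate continuity hypothesis in the general setting.
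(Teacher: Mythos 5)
Your proof is correct and is essentially the paper's own argument: both derive $\abs{Y(a,z)} = \sup_n \abs{a(n)} \leq \abs{a}$ directly from the submultiplicativity bound $\abs{a(n)b} \leq \abs{a}\abs{b}$, and then conclude continuity from boundedness of the linear map $Y$. Your closing remark about why the weaker Definition \ref{d:pfield} would not suffice is a nice observation, but otherwise there is nothing to add.
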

\begin{proof}
  By Proposition 2 in Section 2.1.8 of \cite{BGR}, in this setting, continuity and boundedness are equivalent, though we only require the easier direction of this equivalence. We have
  \[
\abs{Y(a,z)} = \sup_{n} \abs{a(n)} =\sup_{n} \sup_{\abs{b}=1} \abs{a(n)b}\leq \sup_{n} \sup_{\abs{b}=1} \abs{a}\abs{b}= \abs{a}.
\]
In particular, $Y(\bullet,z)$ is a bounded map and thus continuous.
\end{proof}

For every $u,v \in V$, we have $u(n)v\in V$ and so there are well-defined modes $(u(n)v)(m)$ arising from the $p$-adic field $Y(u(n)v,z)$. The following Lemma concerning these modes will allow us to work with the usual Jacobi identity from VOA theory in this new $p$-adic context.

\begin{lem}\label{lemJI}
 Let $u,v,w\in V$. Then for all $r,s,t \in \ZZ$, the infinite sum
   \begin{align*}
     &\sum_{i= 0}^{\infty} \binom{r}{i} (u(t+i)v)(r+s-i)w -\\
     &\sum_{i= 0}^{\infty} (-1)^i\binom{t}{i} \left\{u(r+t-i)(v(s+i)w)-(-1)^tv(s+t-i)(u(r+i)w) \right\}
  \end{align*}
  converges in $V$.
\end{lem}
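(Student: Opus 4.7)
The plan is to invoke Lemma \ref{l:sums}: in a $p$-adic Banach space, a series converges if and only if its general term tends to $0$. I would split the expression into its three natural component series and show that each of their general terms tends to $0$ in $V$ as $i \to \infty$; convergence of the total sum then follows, since a finite sum of convergent series converges.

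Before attacking the three pieces, I would record two preliminary observations. First, the binomial coefficients $\binom{r}{i}$ and $\binom{t}{i}$ are signed integers for $r, t \in \ZZ$, hence lie in $\ZZ_p$ and have $p$-adic absolute value at most $1$; they may be dropped from the norm estimates. Second, the standing hypothesis that $Y(\bullet, z) \colon V \to \cF(V)$ is continuous furnishes a constant $C > 0$ such that $\abs{a(n) b} \leq C \abs{a} \abs{b}$ for all $a, b \in V$ and all $n \in \ZZ$. Indeed, continuity in the linear setting is equivalent to boundedness, giving $\abs{Y(a,z)} \leq C \abs{a}$, while the very definition of the norm on $\cF(V)$ yields $\abs{a(n)} \leq \abs{Y(a,z)}$. (In the submultiplicative setting of Definition \ref{d:pfieldsn}, one may simply take $C = 1$ by Lemma \ref{l:Ycont}.)

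With these in hand, each of the three general terms is bounded in the same pattern. For the first, $\abs{(u(t+i)v)(r+s-i)w} \leq C\abs{u(t+i)v}\abs{w}$, which tends to zero because $u(t+i)v \to 0$ as $i \to \infty$ by Property (2) of Definition \ref{d:pfield}. For the second, $\abs{u(r+t-i)(v(s+i)w)} \leq C\abs{u}\abs{v(s+i)w}$, which tends to zero because $v(s+i)w \to 0$. For the third, $\abs{v(s+t-i)(u(r+i)w)} \leq C\abs{v}\abs{u(r+i)w}$, which tends to zero because $u(r+i)w \to 0$. Multiplying by the bounded binomial factors $\binom{r}{i}$ or $\binom{t}{i}$ preserves the decay.

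The main point to watch is the \emph{uniformity} of the bounding constant $C$ across all states that appear in the sums. Property (1) of Definition \ref{d:pfield} alone gives a constant that a priori depends on the state, which would be inadequate here since the states $u(t+i)v$, $v(s+i)w$, and $u(r+i)w$ vary with $i$. It is precisely the continuity of $Y(\bullet, z)$ that promotes the state-dependent bound to a uniform one and thereby closes the argument.
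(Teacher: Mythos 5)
Your proof is correct and follows the paper's own argument in all essentials: the same three-way decomposition, the same appeal to Lemma \ref{l:sums}, the same observation that the integer binomial coefficients are harmless, and the same crucial use of the continuity of $Y(\bullet,z)$ to control the terms $(u(t+i)v)(r+s-i)w$ whose acting state varies with $i$. The only (cosmetic) difference is that you convert continuity into a single global bound $\abs{a(n)b}\leq C\abs{a}\abs{b}$ via the equivalence of continuity and boundedness, whereas the paper runs an $\veps$-argument directly on the convergent sequence $Y(u(t+i)v,z)\to 0$ and uses the state-dependent constants of Definition \ref{d:pfield} for the other two terms; both are valid.
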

\begin{proof}
   Since $\abs{n} \leq 1$ for all $n \in \ZZ$ by the strong triangle inequality, Lemma \ref{l:sums} implies that it will suffice to show that
  \begin{align*}
    \lim_{i\to \infty}\abs{(u(t+i)v)(r+s-i)w} &=0,\\
    \lim_{i \to \infty}\abs{u(r+t-i)(v(s+i)w)} &=0,\\
    \lim_{i\to \infty}\abs{v(s+t-i)(u(r+i)w)} &=0.
  \end{align*}
  
  We first discuss the second limit above, so let $M$ be the constant from Property (1) of Definition \ref{d:pfield} applied to $u$. Then we know that
  \[
\abs{u(r+t-i)(v(s+i)w)}\leq M\abs{v(s+i)w}.
\]
By Property (2) of Definition \ref{d:pfield}, this goes to zero as $i$ tends to infinity. This establishes the vanishing of the second limit above, and the third is handled similarly.

The first limit requires slightly more work. Notice that if $w =0$ there is nothing to show, so we may assume $w \neq 0$. Since $\lim_{i \to \infty} u(t+i)v =0$, continuity of $Y(\bullet,z)$ implies that
\[
\lim_{i \to \infty} Y(u(t+i)v,z) = 0.
\]
Hence, given $\veps > 0$, we can find $N$ such that for all $i > N$, we have $\abs{Y(u(t+i)v,z)} < \frac{\veps}{\abs{w}}$. By definition of the norm on $p$-adic fields, this means that
\[
  \sup_n\abs{(u(t+i)v)(n)w} < \veps.
\]
In particular, taking $n=r+s-i$, then for $i > N$ we have $\abs{(u(t+i)v)(r+s-i)w} < \veps$ as required. This concludes the proof.
\end{proof}

The reader should compare the next Definition with Definition \ref{kvertdef}.
 \begin{dfn}
  \label{d:pva}
  A \emph{$p$-adic vertex algebra} is a triple $(V, Y, \bone)$ consisting of a $p$-adic Banach space $V$ equipped with a distinguished state (the vacuum vector) $\bone \in V$ and a $p$-adic vertex operator, that is, a continuous $p$-adic linear map
  \[Y\colon V \to \cF(V),\]
  written $Y(a,z) = \sum_{n \in \ZZ} a(n)z^{-n-1}$ for $a\in V$,  satisfying the following conditions:
  \begin{enumerate}
  \item (Vacuum normalization) $\abs{\bone}\leq 1$.
  \item (Creativity) We have $Y(a,z)\bone \in a + z\pseries{V}{z}$, in other words,  $a(n)\bone=0$ for $n\geq 0$ and $a(-1)\bone=a$.
    \item (Jacobi identity) Fix  any $r,s,t \in \ZZ$ and $u,v,w \in V$. Then
  \begin{align*}
&\sum_{i= 0}^{\infty} \binom{r}{i} (u(t+i)v)(r+s-i)w =\\
 &\sum_{i= 0}^{\infty} (-1)^i\binom{t}{i} \left\{u(r+t-i)v(s+i)w-(-1)^tv(s+t-i)u(r+i)w \right\}.
  \end{align*}
\end{enumerate}
\end{dfn}

\begin{dfn}
  \label{d:pvasm}
  A $p$-adic vertex algebra $V$ is said to be \emph{submultiplicative} provided that every $p$-adic field $Y(a,z)$ is submultiplicative.
\end{dfn}
\begin{rmk}
If $V$ is a submultiplicative $p$-adic vertex algebra, then Lemma \ref{l:Ycont} implies that continuity of the state-field correspondence follows from the other axioms.
\end{rmk}

As in the usual algebraic theory of Section \ref{s:primer}, the vertex operator $Y$ is sometimes also called the \emph{state-field correspondence}, and both it and the vacuum vector will often be omitted from the notation. That is, we shall often simply say that $V$ is a $p$-adic vertex algebra. The vacuum property $Y(\bone, z)=\id_V$ follows from these axioms and does not need to be itemized separately. See Theorem \ref{thmvac} below for more details on this point.

\begin{rmk}\label{JIrmk}
  In the formulation of the Jacobi identity we have used the completeness of the $p$-adic Banach space $V$ to ensure that the infinite sum exists, via Lemma \ref{lemJI}. If $V$ is not assumed complete, one could replace the Jacobi identity  with corresponding \emph{Jacobi congruences}: fix $r,s,t\in \ZZ$ and $u,v,w\in\ZZ$. Then the Jacobi congruences insist that for all $\veps > 0$, there exists $j_0$ such that for all $j \geq j_0$, one has
  \begin{align*}
&\left\lvert\sum_{i= 0}^{j} \binom{r}{i} (u(t+i)v)(r+s-i)w -\right.\\
 &\left.\sum_{i= 0}^{j} (-1)^i\binom{t}{i} \left\{u(r+t-i)(v(s+i)w)-(-1)^tv(s+t-i)(u(r+i)w) \right\}\right\rvert < \veps.
  \end{align*}
  In the presence of completeness, this axiom is equivalent to the Jacobi identity.
\end{rmk}

\subsection{Properties of $p$-adic vertex algebras}
In this Subsection we explore some initial consequences of Definition \ref{d:pva}.
\begin{prop}
  \label{p:closedmap}
If $V$ is a $p$-adic vertex algebra, then for $a \in V$ we have
\[
\abs{a} \leq \abs{Y(a,z)}.
\]
In particular, the state-field correspondence $Y$ is an injective closed mapping. If $V$ is furthermore assumed to be submultiplicative, then $\abs{a} = \abs{Y(a,z)}$ for all $a \in V$.
\end{prop}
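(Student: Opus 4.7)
The plan is to extract the inequality $\abs{a} \leq \abs{Y(a,z)}$ directly from the creativity axiom, and then to deduce the remaining assertions as formal consequences of this bound together with standard Banach space principles.

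First, I would observe that the creativity axiom gives $a = a(-1)\bone$ for every $a \in V$. Since $a(-1) \in \Endo(V)$ is a continuous operator with operator norm $\abs{a(-1)} = \sup_{b \neq 0} \abs{a(-1)b}/\abs{b}$, one has $\abs{a(-1)b} \leq \abs{a(-1)}\abs{b}$ for every $b \in V$. Applying this with $b = \bone$ and using the vacuum normalization $\abs{\bone} \leq 1$ gives
\[
  \abs{a} = \abs{a(-1)\bone} \leq \abs{a(-1)}\abs{\bone} \leq \abs{a(-1)}.
\]
Since $\abs{Y(a,z)} = \sup_{n \in \ZZ} \abs{a(n)} \geq \abs{a(-1)}$ by definition of the sup-norm on $\cF(V)$, this yields the desired inequality $\abs{a} \leq \abs{Y(a,z)}$.

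Next I would deduce injectivity and the closed mapping property. Injectivity is immediate: if $Y(a,z) = 0$ then $\abs{a} \leq \abs{Y(a,z)} = 0$, so $a = 0$. For the closed mapping property, the inequality $\abs{a} \leq \abs{Y(a,z)}$ means precisely that $Y$ is bounded below on $V$. This is the standard criterion guaranteeing that $Y$ is a topological embedding of $V$ onto its image, and in particular that the image is closed in $\cF(V)$: if $Y(a_j) \to f$ in $\cF(V)$, then $\{a_j\}$ is Cauchy in $V$ by the bound, hence converges to some $a \in V$ by completeness, and continuity of $Y$ then forces $f = Y(a,z)$. By the same argument, $Y$ carries closed subsets of $V$ to closed subsets of $\cF(V)$.

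For the submultiplicative refinement, the hypothesis (1') gives $\abs{a(n)b} \leq \abs{a}\abs{b}$ for all $n \in \ZZ$ and all $b$, hence $\abs{a(n)} \leq \abs{a}$ for every $n$, and consequently $\abs{Y(a,z)} = \sup_n \abs{a(n)} \leq \abs{a}$. Combined with the previously established lower bound, this yields equality $\abs{Y(a,z)} = \abs{a}$. There is no substantive obstacle in this proof; the only point requiring a little care is ensuring that $\abs{\bone} \leq 1$ suffices (rather than $\abs{\bone} = 1$) to conclude $\abs{a} \leq \abs{a(-1)}$, which it does, since the inequality $\abs{a(-1)\bone} \leq \abs{a(-1)}\abs{\bone}$ only becomes stronger when $\abs{\bone}$ is smaller.
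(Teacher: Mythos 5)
Your proof is correct and follows essentially the same route as the paper: the inequality $\abs{a}\leq\abs{Y(a,z)}$ comes from creativity plus vacuum normalization, the injectivity and closed-mapping claims are the standard consequences of being bounded below (which the paper leaves to the reader), and the submultiplicative refinement is exactly the bound from Lemma \ref{l:Ycont}. No issues.
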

\begin{proof}
  If $a \in V$ then $a(-1)\bone = a$ by the creativity axiom, and so
  \[\abs{a}=\abs{a(-1)\bone}\leq \abs{Y(a, z)}\abs{\bone} \leq \abs{Y(a,z)},\]
  where the last inequality follows by vacuum normalization. This proves the first claim, and the second follows easily from this.

  For the final claim, observe that the proof of Lemma \ref{l:Ycont} shows that when $V$ is submultiplicative, we also have $\abs{Y(a,z)} \leq \abs{a}$. This concludes the proof.
\end{proof}

 \begin{dfn}
   \label{d:integral}
  A $p$-adic vertex algebra $V$ is said to \emph{have an integral structure} provided that for the $\ZZ_p$-module $V_0 \df \{v\in V \mid \abs{v} \leq 1\}$, the following condition holds:
  \begin{enumerate}
   \item the restriction of the state-field correspondence $Y$ to $V_0$ defines a map
    \[
  Y \colon V_0 \to \pseries{\Endo_{\Zp}(V_0)}{z,z^{-1}}.
    \]
  \end{enumerate}
\end{dfn}

Note that $\bone\in V_0$ by definition, and if $a\in V_0$ then $Y(a, z)\bone \in a+z\pseries{V_0}{z}$. In effect, then, the triple $(V_0, \bone, \Res_{V_0}Y)$ is a $p$-adic vertex algebra over $\ZZ_p$, though we have not formally defined such an object.

The axioms for a $p$-adic vertex algebra are chosen so that the following holds:
\begin{lem}
  \label{l:reduction}
Suppose that $V$ is a $p$-adic vertex algebra with an integral structure $V_0$.\ Then $V_0/p^kV_0$ inherits a natural structure of algebraic vertex algebra over $\ZZ/p^k\ZZ$ for all $k\geq 0$.
\end{lem}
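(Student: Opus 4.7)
The plan is to reduce the $p$-adic axioms modulo $p^k$ and verify that what results is an algebraic vertex algebra over $\ZZ/p^k\ZZ$ in the sense of Definition \ref{kvertdef}. The integral structure hypothesis supplies the starting point: for $u \in V_0$, each mode $u(n)$ is already a $\ZZ_p$-linear endomorphism of $V_0$, so it descends to a $\ZZ/p^k\ZZ$-linear endomorphism $\overline{u(n)}$ of $V_0/p^kV_0$. Because $(p^k u)(n) = p^k u(n)$ acts as zero modulo $p^k$, the assignment $\bar u \mapsto \sum_n \overline{u(n)} z^{-n-1}$ factors through the quotient and defines the candidate state-field correspondence $\bar Y \colon V_0/p^kV_0 \to \pseries{\Endo(V_0/p^kV_0)}{z,z^{-1}}$. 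I would take the image of $\bone$ as the vacuum vector.

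The key step — and the whole point of the axioms — is verifying the algebraic truncation condition, which typically fails in $V$ itself. Given $u,v \in V_0$, property (2) of Definition \ref{d:pfield} gives $\lim_{n\to \infty} u(n)v = 0$; for $n$ sufficiently large one has $\abs{u(n)v}\leq p^{-k}$, so $u(n)v \in p^k V_0$ and therefore $\overline{u(n)}\,\bar v = 0$ in the quotient. This supplies an integer $n_0$ (depending on $\bar u$ and $\bar v$) above which the modes annihilate $\bar v$, as required. Creativity is immediate from reducing its $p$-adic counterpart modulo $p^k$, and $\bone$ behaves as a vacuum by the general fact $Y(\bone, z)=\id_V$ alluded to after Definition \ref{d:pva}.

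The remaining task is to derive the algebraic Jacobi identity \eqref{algJI} from the $p$-adic Jacobi identity of Definition \ref{d:pva}. The latter is an equality of $p$-adically convergent series in $V$; applying the mod-$p^k$ truncation established above to each of the three families of terms $(u(t+i)v)(r+s-i)w$, $u(r+t-i)(v(s+i)w)$, and $v(s+t-i)(u(r+i)w)$, only finitely many summands survive after reduction modulo $p^k$. Since the binomial coefficients $\binom{r}{i}$ and $\binom{t}{i}$ are integers, no denominators obstruct the reduction, and the $p$-adic identity collapses to the ordinary finite identity \eqref{algJI} in $V_0/p^kV_0$.

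The main thing requiring care will be confirming that the truncation holds uniformly enough to discard all but finitely many terms in each of the three families appearing in the Jacobi identity simultaneously; this is not really an obstacle but rather an application of property (2) of Definition \ref{d:pfield} to the pairs $(u(t+i)v,w)$, $(u, v(s+i)w)$, and $(v, u(r+i)w)$, combined with the continuity of $Y$ to control the first family as $u(t+i)v \to 0$.
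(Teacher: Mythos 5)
Your proposal is correct and follows essentially the same route as the paper: the paper's proof likewise identifies the truncation condition as the only real issue and derives it from $\lim_{n\to\infty}a(n)b=0$ forcing $a(n)b\in p^kV_0$ for large $n$, leaving the remaining axioms (creativity, Jacobi) to reduce formally modulo $p^k$. Your additional care about discarding all but finitely many terms in each of the three families of the Jacobi identity is a fuller write-up of what the paper leaves implicit, not a different argument.
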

\begin{proof}
  For a formal definition of a vertex ring over $\ZZ/p^k\ZZ$, the reader can consult \cite{Mason1}. Since we do not wish to go into too many details on this point, let us simply point out that if $a,b\in V_0$, then since $a(n)b$ tends to $0$ as $n$ grows, it follows that for any $k\geq 1$, the reduced series
  \[
    \sum_{n\in\ZZ} a(n)b z^{-n-1} \pmod{ p^kV_0} \in \pseries{\Endo_{\ZZ/p^k\ZZ}(V_0/p^kV_0)}{z,z^{-1}}
  \]
  has a finite Laurent tail. Since this is the key difference between algebraic and $p$-adic vertex rings, one deduces the Lemma from this fact.
\end{proof}

\section{\texorpdfstring{$p$}{p}-adic Goddard axioms}
\label{s:goddard}
In this Section we draw some standard consequences from the $p$-adic Jacobi identity in \eqref{d:pva}. The general idea is to show that the axioms for a $p$-adic vertex algebra, especially the Jacobi identity,  are equivalent to an alternate set of axioms that are ostensibly more intuitive and easier to recognize and manipulate.\ In the classical case of algebraic vertex algebras over $\CC$, or indeed any commutative base ring $k$, these are known as Goddard axioms \cite{Goddard, Mason1, MatsuoNagatomo}. At the same time, we develop some facts that we use later. Let $V$ be a $p$-adic vertex algebra.
\subsection{Commutator, associator and locality formulas}\label{SSacloc}
We begin with some immediate consequences of the Jacobi identity.
\begin{prop}[Commutator formula]
  \label{p:commutator}
For all $r, s\in\ZZ$ and all $u, v, w\in V$ we have
\begin{equation}
    \label{commform}
[u(r), v(s)]w= \sum_{i=0}^{\infty}\binom{r}{i} (u(i)v)(r+s-i)w.
\end{equation}
\end{prop}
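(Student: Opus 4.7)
The plan is to derive the commutator formula by specializing the $p$-adic Jacobi identity to the case $t=0$. First I would substitute $t=0$ into the Jacobi identity from Definition \ref{d:pva}. On the right-hand side, the factor $\binom{0}{i}$ vanishes for all $i\geq 1$ and equals $1$ for $i=0$, and $(-1)^t = 1$, so the entire infinite sum collapses to the single term
\[
u(r)v(s)w - v(s)u(r)w = [u(r),v(s)]w.
\]
On the left-hand side, the specialization $t=0$ gives exactly $\sum_{i\geq 0}\binom{r}{i}(u(i)v)(r+s-i)w$, which is the desired right-hand side of \eqref{commform}.

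The only point requiring a word of justification is convergence of this last series, but this is covered by Lemma \ref{lemJI} applied with $t=0$: the general vanishing statement $\lim_{i\to\infty}\abs{(u(t+i)v)(r+s-i)w}=0$ specializes to the case at hand, and Lemma \ref{l:sums} then ensures the sum converges in $V$. Since the binomial coefficients are integers, they have $p$-adic absolute value at most $1$ and do not disrupt the convergence.

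I do not anticipate any real obstacle here — the commutator formula is essentially a tautological consequence of the $t=0$ specialization of the Jacobi identity, and all the analytic machinery needed to make sense of the infinite sum has already been established in Lemmas \ref{l:sums} and \ref{lemJI}. The proof should therefore amount to little more than a single line of substitution accompanied by a pointer back to Lemma \ref{lemJI} for convergence.
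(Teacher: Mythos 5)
Your proof is correct and matches the paper's approach exactly: the paper's entire proof is ``Take $t=0$ in the Jacobi identity.'' Your additional remarks on the collapse of the right-hand side and on convergence via Lemma \ref{lemJI} are accurate elaborations of the same one-line argument.
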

\begin{proof}
Take $t=0$ in the Jacobi identity.
\end{proof}

\begin{prop}[Associator formula]\label{p:associator}
For all $s, t\in\ZZ$ and all $u, v, w\in V$ we have
\begin{equation}
\label{assocform}
(u(t)v)(s)w= \sum_{i=0}^{\infty} (-1)^i\binom{t}{i} \left\{u(t-i)v(s+i)w-(-1)^tv(s+t-i)u(i)w \right\}. 
\end{equation}
\end{prop}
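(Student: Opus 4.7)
The plan is to obtain the associator formula as a specialization of the Jacobi identity in Definition \ref{d:pva}, in direct analogy with the algebraic case. Observe that the commutator formula of Proposition \ref{p:commutator} was obtained by setting $t=0$ on both sides of the Jacobi identity; symmetrically, I would obtain the associator formula by setting $r=0$ in the Jacobi identity.

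More precisely, first I would note that on the left-hand side of the Jacobi identity, the binomial coefficient $\binom{r}{i}$ with $r = 0$ vanishes for all $i \geq 1$ and equals $1$ for $i = 0$. Thus the entire infinite sum on the left collapses to the single term $(u(t)v)(s)w$, which is precisely the left-hand side of \eqref{assocform}. On the right-hand side of the Jacobi identity, substituting $r=0$ directly yields
\[
\sum_{i=0}^{\infty} (-1)^i \binom{t}{i} \left\{ u(t-i)v(s+i)w - (-1)^t v(s+t-i)u(i)w \right\},
\]
which matches the right-hand side of \eqref{assocform} verbatim.

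The only conceptual subtlety is that, unlike in the algebraic case, these are genuine infinite series of elements of $V$, so one must know that both sides make sense as convergent sums in the $p$-adic Banach space $V$. However, this is precisely the content of Lemma \ref{lemJI}, whose hypotheses apply here with $r = 0$, so convergence of the right-hand side is automatic and no further analytic work is required. Consequently the identity claimed in Proposition \ref{p:associator} follows immediately from the Jacobi identity axiom, and there is no significant obstacle to surmount beyond invoking the result already established. The proof is therefore a one-line specialization, entirely parallel in structure to that of Proposition \ref{p:commutator}.
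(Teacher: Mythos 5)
Your proof is correct and is exactly the paper's argument: the paper's entire proof reads ``Take $r=0$ in the Jacobi identity,'' and your specialization of the binomial coefficient $\binom{0}{i}$ together with the appeal to Lemma \ref{lemJI} for convergence fills in precisely the details that are implicit there. No differences to report.
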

\begin{proof}
Take $r=0$ in the Jacobi identity.
\end{proof}

The next result requires slightly more work.
\begin{prop}[$p$-adic locality]
  Let  $u, v, w \in V$. Then
    \begin{equation}\label{locform}
      \lim_{t\to \infty} (x-y)^t[Y(u, x), Y(v, y)]w = 0.
    \end{equation}
  \end{prop}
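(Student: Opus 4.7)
The plan is to interpret the statement coefficient-wise: the series $(x-y)^t[Y(u,x),Y(v,y)]w$ lives in $\pseries{V[x,x^{-1},y,y^{-1}]}{\ldots}$ (a formal double Laurent series with coefficients in $V$), and the assertion $\lim_{t\to\infty}(x-y)^t[Y(u,x),Y(v,y)]w=0$ should mean that the sup-norm $\sup_{A,B\in\ZZ}|\text{coef of }x^{-A-1}y^{-B-1}|$ tends to $0$ as $t\to\infty$. So I would first compute this coefficient explicitly and then bound it $p$-adically.

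For the coefficient computation, expand $(x-y)^t=\sum_{j=0}^t\binom{t}{j}(-1)^jx^{t-j}y^j$ and multiply with $[Y(u,x),Y(v,y)]w=\sum_{r,s}[u(r),v(s)]w\cdot x^{-r-1}y^{-s-1}$. Matching the $x^{-A-1}y^{-B-1}$ coefficient forces $r=A+t-j$ and $s=B+j$, giving
\[
C_t(A,B)\df \sum_{j=0}^{t}(-1)^j\binom{t}{j}\bigl[u(A+t-j),v(B+j)\bigr]w.
\]
The next step is to recognize this as exactly the right-hand side of the Jacobi identity in Definition \ref{d:pva} with the substitutions $r\mapsto A$, $s\mapsto B$ (and with the same $t$). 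Indeed, reindexing $i\mapsto t-i$ in the $v(s+t-i)u(r+i)w$ term of the Jacobi identity collapses its two sums into a single commutator sum matching $C_t(A,B)$. Therefore the Jacobi identity gives the crucial identification
\[
C_t(A,B)=\sum_{i=0}^{\infty}\binom{A}{i}\bigl(u(t+i)v\bigr)(A+B-i)w.
\]

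Now the $p$-adic estimates take over. Since $A\in\ZZ$ implies $\binom{A}{i}\in\ZZ$, we have $|\binom{A}{i}|\leq 1$, and by the definition of the norm on $\cF(V)$ each term is bounded by
\[
\bigl|\bigl(u(t+i)v\bigr)(A+B-i)w\bigr|\leq \bigl|Y(u(t+i)v,z)\bigr|\cdot|w|.
\]
The field axiom (2) of Definition \ref{d:pfield} applied to $Y(u,z)$ says $u(n)v\to 0$ as $n\to\infty$, hence for any $\delta>0$ there is $N$ with $|u(n)v|<\delta$ for all $n\geq N$. Thus for $t\geq N$ and every $i\geq 0$ we have $|u(t+i)v|<\delta$ uniformly in $i$. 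Continuity of the state-field correspondence $Y:V\to\cF(V)$ then yields $|Y(u(t+i)v,z)|\to 0$ uniformly in $i\geq 0$ as $t\to\infty$. Applying the ultrametric inequality to the sum defining $C_t(A,B)$ shows $|C_t(A,B)|$ is bounded by the supremum of its summands, and this supremum is controlled uniformly in $A$ and $B$. Hence $\sup_{A,B}|C_t(A,B)|\to 0$, which is the desired $p$-adic locality.

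The main obstacle is the algebraic step: recognizing $C_t(A,B)$ as one side of the Jacobi identity, which requires carefully reindexing to fold the two terms on the right-hand side of \eqref{d:pva}(3) into a single commutator sum. Once this identification is in place, the $p$-adic estimate is immediate from the field axioms and continuity of $Y$, with the ultrametric inequality doing the essential work by transforming a sum bound into a supremum bound, thereby preserving uniformity in $A$ and $B$.
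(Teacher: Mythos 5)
Your proof is correct and follows essentially the same route as the paper: identify the coefficient of a fixed monomial in $(x-y)^t[Y(u,x),Y(v,y)]w$ with the right-hand side of the Jacobi identity, pass to the left-hand side $\sum_i\binom{r}{i}(u(t+i)v)(r+s-i)w$, and bound it using $u(n)v\to 0$ together with continuity of $Y$. Your version additionally makes explicit that the resulting bound is uniform in the coefficient indices $A,B$, which the paper leaves implicit, but the underlying argument is the same.
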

\begin{proof}
Because $\lim_{n \to \infty} a(n)b=0$, continuity of $Y$ yields $\lim_{n\to \infty} Y(a(n)b,z) =0$ in the uniform sup-norm. Thus, as in the last stages of the proof of Lemma \ref{lemJI}, for any $\veps>0$ and fixed $r$, $s$, $i$, we have the inequality $\abs{(u(t+i)v)(r+s-i)w}<\veps$ for all large enough $t$. 
Apply this observation to the left side of the Jacobi identity to deduce the following:

Let $r, s \in \ZZ$ and $u, v, w \in V$. For any $\veps>0$ there is an integer $N$ such that for all $t>N$ we have
\[
\abs{\sum_{i=0}^{\infty} (-1)^i\binom{t}{i}\left( u(r+t-i)v(s+i)w-(-1)^t v(s+t-i) u(r+i)w\right)} < \veps.\]

By a direct calculation, we can see that the summation on the left side of this inequality is exactly the coefficient of $x^ry^s$ in $(x-y)^t[Y(u, x), Y(v, y)]w$. Thus, we arrive at the statement of $p$-adic locality, and this concludes the proof.
\end{proof}

\begin{dfn}
  \label{d:locality}
  If equation \eqref{locform} holds for two $p$-adic fields $Y(u,x)$ and $Y(v,y)$, we say that $Y(u, x)$ and $Y(v, y)$ are \emph{mutually local $p$-adic fields}. When the context is clear, we will drop the word $p$-adic from the language.
\end{dfn}
\subsection{Vacuum vector}\label{SSvac}
We will prove
\begin{thm}
  \label{thmvac} Suppose that $(V, Y, \bone)$ is a p-adic vertex algebra. Then
\[
Y(\bone, z) = \id_V.
\]
\end{thm}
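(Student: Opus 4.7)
The plan is to identify each mode $\bone(n) \in \Endo(V)$ explicitly, showing $\bone(-1) = \id_V$ and $\bone(n) = 0$ for all $n \neq -1$; then $Y(\bone,z) = \sum_{n} \bone(n)z^{-n-1} = \id_V$. The first observation is that vacuum modes are central, i.e.\ $[u(r),\bone(s)] = 0$ for every $u \in V$ and $r,s \in \ZZ$. This is immediate from the commutator formula of Proposition \ref{p:commutator} applied with $v=\bone$: the right-hand side becomes $\sum_{i\geq 0}\binom{r}{i}(u(i)\bone)(r+s-i)w$, and every term vanishes by creativity, which forces $u(i)\bone = 0$ whenever $i\geq 0$. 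Centrality together with creativity then identifies $\bone(-1)$ at once, since
\[
\bone(-1)v \;=\; \bone(-1)v(-1)\bone \;=\; v(-1)\bone(-1)\bone \;=\; v(-1)\bone \;=\; v,
\]
so $\bone(-1) = \id_V$.

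Next I would pin down the nonnegative modes by applying the commutator formula a second time, now with $u=\bone$. By centrality the left-hand side is zero, giving $\sum_{i\geq 0}\binom{r}{i}(\bone(i)v)(r+s-i)w = 0$ for all $v,w\in V$ and $r,s\in\ZZ$. For integer $r\geq 0$ the binomial forces this to be a finite sum. Induct on $r$: at $r=0$ the relation reads $(\bone(0)v)(s)w=0$ for all $s,w$, so $Y(\bone(0)v,z)=0$, and the injectivity of the state-field correspondence (Proposition \ref{p:closedmap}) yields $\bone(0)v=0$, hence $\bone(0)=0$; the inductive step eliminates the lower-order terms by hypothesis, again isolates $(\bone(r)v)(s)w=0$, and concludes $\bone(r)=0$ by the same injectivity argument.

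It remains to show $\bone(n)=0$ for $n\leq -2$, and this is the step where the $p$-adic infinite-sum structure really enters, hence where I expect the main obstacle. I would specialize the Jacobi identity of Definition \ref{d:pva} to $u=\bone$, $r=0$, $t=-1$. The left-hand side collapses via $\binom{0}{i}=\delta_{i,0}$ to $(\bone(-1)v)(s)w = v(s)w$, while the right-hand side, using $\binom{-1}{i}=(-1)^i$ together with the vanishing of $\bone(i)$ for $i\geq 0$ proved above, simplifies to the convergent (by Lemma \ref{lemJI}) sum $\sum_{i\geq 0}\bone(-1-i)v(s+i)w$. Subtracting the $i=0$ term using $\bone(-1)=\id_V$ yields
\[
\sum_{j\geq 0}\bone(-2-j)\,v(s+1+j)\,w \;=\; 0.
\]
Now specialize $w=\bone$ and $s=-k-1$ for each $k\geq 1$ in turn: creativity kills the tail $j\geq k$ (since $v(j-k)\bone=0$), the inductive hypothesis kills $0\leq j\leq k-2$ (as $\bone(-2-j)$ with $2\leq 2+j\leq k$ already vanishes), and only the $j=k-1$ term $\bone(-k-1)v$ survives. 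Hence $\bone(-k-1)v=0$ for every $v$, so $\bone(-k-1)=0$, completing the induction. The point absent from the classical algebraic theory is that the final step genuinely requires Lemma \ref{lemJI} to make sense of the infinite sum, and the specialization $w=\bone$ is crucial for reducing an a priori infinite identity to a single surviving term.
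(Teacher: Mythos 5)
Your proof is correct, and while it follows the same overall strategy as the paper's (commutator formula plus creativity to handle the modes $\bone(n)$ for $n\geq -1$, then the Jacobi identity with $t=-1$ and a downward induction for $n\leq -2$), the execution differs in two worthwhile ways. For the negative modes, you run the induction with a \emph{general} state $v$ and only $w=\bone$, deriving the identity $\sum_{j\geq 0}\bone(-2-j)v(s+1+j)w=0$ and isolating the single surviving term $\bone(-k-1)v$; the paper instead specializes $u=v=w=\bone$ throughout, proves only $\bone(n)\bone=0$, and then transfers to general states via the separately established identity $\bone(s)u=u(-1)\bone(s)\bone$. Your route avoids that transfer step but pays for it by needing the full strength of Lemma \ref{lemJI} for a sum over general $v$. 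For the nonnegative modes, you invoke injectivity of the state--field correspondence (Proposition \ref{p:closedmap}) to pass from $Y(\bone(r)v,z)=0$ to $\bone(r)v=0$; the paper gets the same conclusion more cheaply from $\bone(s)u=u(-1)\bone(s)\bone$ together with $\bone(s)\bone=0$ for $s\geq 0$ --- and in fact your own centrality observation already yields $\bone(s)v=v(-1)\bone(s)\bone=0$ for $s\geq 0$ directly, so the injectivity argument and the induction on $r$ could be dispensed with. Both arguments are sound; yours is slightly more self-contained in the negative-mode step and slightly more roundabout in the nonnegative-mode step.
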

\begin{proof}
  We have to show that for all states $u\in V$ we have 
\begin{equation}\label{eq0}
\bone(n)u = \delta_{n, -1}u.
\end{equation}
 To this end, first we assert that for all $s\in\ZZ$,
\begin{equation}\label{eq1}
u(-1)\bone(s)\bone = \bone(s)u.
\end{equation}
This follows directly from the creativity axiom and the special case of the commutator formula \eqref{commform} in which $v=w=\bone$ and $r=-1$, $t=0$.

Now, by the creativity axiom once again, we have $\bone(s)\bone=0$ for $s\geq 0$ as well as $\bone(-1)\bone=\bone$. Feed these inequalities into \eqref{eq1} to see that \eqref{eq0} holds whenever $n\geq -1$.

Now we prove \eqref{eq0} for $n\leq-2$ by downward induction on $n$.\ First we choose $u=v=w=\bone$ and $t=-1$ in the Jacobi identity and fix $r, s\in\ZZ$, to see that
\begin{align*}
&\sum_{i= 0}^{\infty} \binom{r}{i} (\bone(-1+i)\bone)(r+s-i)\bone \\
=& \sum_{i= 0}^{\infty} \left\{\bone(r-1-i)(\bone(s+i)\bone)+\bone(s-1-i)(\bone(r+i)\bone) \right\} ,
\end{align*}
and therefore since we already know that $\bone(n)\bone=\delta_{n, -1}\bone$ for $n\geq -1$ we obtain
  \begin{equation}\label{eq2}
\bone(r+s)\bone =
  \sum_{i= 0}^{\infty} \left\{\bone(r-1-i)(\bone(s+i)\bone)+\bone(s-1-i)(\bone(r+i)\bone) \right\} 
  \end{equation}
  Specialize \eqref{eq2} to the case $r=s=-1$ to see that
   \[
 \bone(-2)\bone =
  \left\{\bone(-2)(\bone(-1)\bone)+\bone(-2)(\bone(-1)\bone) \right\}=2\bone(-2)\bone,
  \]
  whence $\bone(-2)\bone=0$. This begins the induction.
 
  Now choose $r+s=-n$ with $0\leq r<-s-1$. Then \eqref{eq2} reads
   \begin{equation}\label{eq3}
\bone(-n)\bone =
  \sum_{i= 0}^{\infty} \bone(r-1-i)\bone(s+i)\bone 
  \end{equation}
  By induction, the expression in \eqref{eq3} under the summation sign vanishes whenever $s+i>-n$ and $s+i\neq-1$, i.e., $i>r$, $i\neq-s-1$. It also vanishes if $i< r$ thanks to the commutator formula \eqref{commform}. So, the only possible nonzero contribution arises when $i=r$ and $i=-s-1$, in which case we obtain $\bone(-n)\bone = 2\bone(-n)\bone$ and therefore $\bone(-n)\bone=0$.
\end{proof}

\subsection{Translation-covariance and the canonical derivation $T$}\label{SStranscov} The canonical endomorphism $T$ of $V$ is defined by the formula
\begin{align}\label{Tdef}
Y(a, z)\bone &= a+T(a)z+ O(z^2),
\end{align}
that is, $T(a) \df a(-2)\bone$. The endomorphism $T$ is called the \emph{canonical derivation} of $V$ because of the next result. \emph{Translation-covariance} (with respect to  $T$) may refer to either of the equalities in \eqref{transcov} below, but we will usually retain this phrase to mean only the second equality.
\begin{thm}
  We have $T(a)(n)=-na(n-1)$. Indeed,
  \begin{align}
    \label{transcov}
Y(T(a), z)&=\partial_zY(a, z) = [T, Y(a, z)].
\end{align}
Moreover, $T$ is a derivation of $V$ in the sense that for all states $u, v\in V$ and all  $n$ we have
\begin{align*}
T(u(n)v) &= T(u)(n)v+u(n)T(v).
\end{align*}
\end{thm}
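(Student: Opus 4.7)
The plan is to derive all three assertions from two applications of the associator formula \eqref{assocform}, using the definition $T(a) = a(-2)\bone$, the creativity axiom, and Theorem \ref{thmvac}, which says that $\bone(n) = \delta_{n,-1}\id_V$.

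First I would establish the mode formula $T(a)(n) = -na(n-1)$ by applying the associator formula with $u = a$, $v = \bone$, $t = -2$, and arbitrary $s \in \ZZ$, $w \in V$, obtaining
\begin{align*}
T(a)(s)w &= (a(-2)\bone)(s)w \\
&= \sum_{i \geq 0}(-1)^i \binom{-2}{i}\bigl[a(-2-i)\bone(s+i)w - \bone(s-2-i)a(i)w\bigr].
\end{align*}
Invoking Theorem \ref{thmvac} collapses each infinite sum to at most one term: the first survives only for $i = -s-1 \geq 0$ (so $s \leq -1$), and the second only for $i = s-1 \geq 0$ (so $s \geq 1$). Using $\binom{-2}{k} = (-1)^k(k+1)$ for $k \geq 0$, both surviving contributions simplify to $-s\cdot a(s-1)w$; for $s = 0$ neither contributes and the right side is $0 = -0\cdot a(-1)w$. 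This proves the mode formula.

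The first equality in \eqref{transcov}, $Y(T(a),z) = \partial_z Y(a,z)$, is then immediate by extracting coefficients of $z^{-n-1}$: the left-hand coefficient is $T(a)(n) = -n\,a(n-1)$, and the right-hand coefficient from $\partial_z \sum_m a(m) z^{-m-1}$ is $-n\,a(n-1)$ as well.

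For the derivation property, I would apply the associator formula a second time with $u = a$, arbitrary $v$, $t = n$, $s = -2$, and with the outermost state taken to be $\bone$:
\[
(a(n)v)(-2)\bone = \sum_{i\geq 0}(-1)^i \binom{n}{i}\bigl[a(n-i)\,v(i-2)\bone - (-1)^n v(n-i-2)\,a(i)\bone\bigr].
\]
Creativity kills the second sum entirely since $a(i)\bone = 0$ for $i \geq 0$, and it restricts the first sum to $i \in \{0,1\}$ since $v(i-2)\bone = 0$ for $i \geq 2$; the surviving terms produce $a(n)T(v) - n\,a(n-1)v$. Since the left side equals $T(a(n)v)$ by definition of $T$, this yields
\[
T(a(n)v) = a(n)T(v) - n\,a(n-1)v = a(n)T(v) + T(a)(n)v,
\]
which is exactly the stated Leibniz rule. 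Rearranging as $[T, a(n)] = T(a)(n)$ and reassembling generating series recovers the remaining identity $[T, Y(a,z)] = Y(T(a),z)$.

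The only delicate point is the case analysis in the first step, confirming that the single surviving $i$ in each of the regimes $s \leq -1$ and $s \geq 1$ really gives $-s\cdot a(s-1)w$, and that the boundary $s = 0$ vanishes on both sides; once creativity and the vacuum computation of Theorem \ref{thmvac} have been invoked, the rest of the argument is a short, finite consequence of the associator formula with no convergence issues to worry about.
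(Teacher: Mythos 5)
Your proposal is correct and follows essentially the same route as the paper: both arguments specialize the associator formula first at $t=-2$, $v=\bone$ (using Theorem \ref{thmvac} to collapse the sums and obtain $T(a)(s)=-sa(s-1)$), and then at $s=-2$, $w=\bone$ (using creativity to obtain the Leibniz rule), finally deducing $[T,Y(a,z)]=Y(T(a),z)$ by rearranging the derivation identity. Your treatment is in fact slightly more careful than the paper's at the boundary case $s=0$ and in the explicit evaluation of $\binom{-2}{i}$.
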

\begin{proof}
Take $v=\bone$ and $t=-2$ in the associator formula \eqref{assocform} to get
\begin{align}\label{assoc1}
 (u(-2)\bone)(s)w &= \sum_{i= 0}^{j} (-1)^i\binom{-2}{i} \left\{u(-2-i)\bone(s+i)w-\bone(s-2-i)u(i)w \right\}.
\end{align}
Now use Theorem \ref{thmvac}. The expression under the summation is nonzero in only two possible cases, namely $s+i=-1$ and $s-i=1$, and these cannot occur simultaneously.\ In the first case \eqref{assoc1} reduces to
 \begin{align}\label{specform}
  (u(-2)\bone)(s)w + s u(s-1)w  &=0.
  \end{align}
This is the first stated equality in \eqref{transcov}). In the second case, when $s-i=1$, we get exactly the same conclusion by a similar argument. This proves \eqref{transcov} in all cases.

Next we will prove that $T$ is a derivation as stated in the Theorem.\ Use the  creativity axiom and the associator formula \eqref{assocform} with $s=-2$ to see that
\begin{align*}
(u(t)v)(-2)\bone &= \sum_{i= 0}^{j} (-1)^i\binom{t}{i} u(t-i)v(-2+i)\bone = u(t)v(-2)\bone- tu(t-1)v,
  \end{align*}
Using the special case \eqref{specform} of \eqref{transcov} that has already been established, the previous display reads $Tu(t)v=  u(t)T(v) + T(u)(t)v$. This proves the derivation property of $T$.

Finally, we have
\begin{align*}
[T, a(n)]w &= T(a(n)w)-a(n)T(w)= T(a)(n)w
\end{align*}
by the derivation property of $T$. This is equivalent to the second equality
\[Y(T(a), z)=[T, Y(a, z)]\]
in \eqref{transcov}. We have now proved all parts of the Theorem.
\end{proof}

\subsection{Statement of the converse}
We have shown that a $p$-adic vertex algebra $(V, Y, \bone)$ consists, among other things, of a set of \emph{mutually local}, \emph{translation-covariant}, \emph{creative}, $p$-adic fields on $V$.\ Our next goal is to prove a converse to this statement.\ Specifically,

\begin{thm}
  \label{thmconverse}
  Let the quadruple $(V, Y, \bone, T)$ consist of a $p$-adic Banch space $V$; a continuous $\QQ_p$-linear map $Y: V\rightarrow \cF(V)$, notated $a \mapsto Y(a, z) \df \sum_{n\in\ZZ} a(n)z^{-n-1}$; a distinguished state $\bone\in V$; and a linear endomorphism $T\in \Endo(V)$ satisfying $T(\bone)=0$.

  Suppose further that:
  \begin{enumerate}
  \item[(a)] any pair of fields $Y(a, x), Y(b, y)$ are mutually local in the sense of \eqref{locform};
  \item[(b)] they are creative with respect to $\bone$ in the sense of Definition \ref{d:pva} (1);
  \item[(c)] and they are translation covariant with respect to $T$ in the sense that the second equality of \eqref{transcov} holds.
  \end{enumerate}
Then the triple $(V, Y, \bone)$ is a $p$-adic vertex algebra as in Definition \ref{d:pva}, and $T$ is the canonical derivation.
\end{thm}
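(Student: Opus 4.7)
My plan is to follow the classical Goddard-axioms recipe of Section \ref{SSacloc} in reverse, using the $p$-adic locality \eqref{locform}, translation-covariance, and creativity to recover the Jacobi identity. There are three things to establish: (i) $Y(\bone, z) = \id_V$; (ii) $T(a) = a(-2)\bone$, so that $T$ coincides with the canonical derivation of \eqref{Tdef}; and (iii) the Jacobi identity of Definition \ref{d:pva}(3). The $p$-adic analytic inputs are Lemma \ref{lemJI}, which guarantees convergence of the infinite sums that arise, and continuity of $Y$.

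I would first show that $Y(a, z)\bone = \exp(zT)\, a$ for every $a \in V$. Translation-covariance applied to $\bone$ together with the hypothesis $T\bone = 0$ gives
\[
\partial_z\bigl(Y(a, z)\bone\bigr) = [T, Y(a, z)]\bone = T\bigl(Y(a, z)\bone\bigr),
\]
a formal ODE in $\pseries{V}{z}$ whose unique solution with initial value $a$ (supplied by creativity) is $\exp(zT)\, a$. Comparing coefficients yields $a(-n-1)\bone = T^n(a)/n!$ for $n \geq 0$; in particular $a(-2)\bone = T(a)$, which is assertion (ii). Specialising to $a = \bone$ also gives $Y(\bone, z)\bone = \bone$, which is half of assertion (i). To upgrade this to $Y(\bone, z) = \id_V$ I would establish a $p$-adic version of Goddard's uniqueness theorem: a translation-covariant $p$-adic field that is mutually local with every $Y(v, y)$ is determined by its action on $\bone$. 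Both the constant field $\id_V$ and $Y(\bone, z)$ act as $\bone \mapsto \bone$, are translation-covariant (since $T\bone = 0$), and are mutually local with every $Y(v, y)$, so uniqueness forces $Y(\bone, z) = \id_V$.

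The heart of the proof is the extraction from $p$-adic locality of the commutator formula
\[
[u(r), v(s)]w = \sum_{i=0}^{\infty} \binom{r}{i}(u(i)v)(r+s-i)w
\]
and its companion associator formula. For fixed $r, s \in \ZZ$ and $w \in V$, the coefficient of $x^r y^s$ in $(x-y)^t[Y(u, x), Y(v, y)]w$ is a finite sum of modes of $u$ and $v$ applied to $w$, and $p$-adic locality says this coefficient tends to $0$ in $V$ as $t \to \infty$. Rearranging this vanishing into a convergent expression on the right-hand side uses Lemma \ref{lemJI} to control the tail. To identify the modes appearing on the right as $(u(i)v)(r+s-i)$ rather than modes of some other field, one again applies the $p$-adic Goddard uniqueness: both the candidate commutator kernel and the field $Y(u(i)v, y)$ (assembled with appropriate delta-derivative weights) are mutually local with every $Y(w, y)$, translation-covariant, and agree on $\bone$ thanks to $Y(u(i)v, z)\bone = e^{zT}u(i)v$, so they must coincide.

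With the commutator and associator formulas in hand, the Jacobi identity of Definition \ref{d:pva}(3) is obtained by combining them exactly as in Section \ref{SSacloc}, with convergence of every infinite sum supplied by Lemma \ref{lemJI}. I expect the main obstacle to be the passage from the limit form of $p$-adic locality to the $p$-adically convergent commutator formula with the correct right-hand side: in the algebraic setting, the locality polynomial $(x-y)^N$ annihilates the commutator for a finite $N$ and only finitely many terms ever appear, whereas here one must track $p$-adic convergence at every stage and establish a $p$-adic Goddard uniqueness result strong enough to identify the constituent fields. Once uniqueness is in place, the remaining work is the translation-covariant bookkeeping familiar from the algebraic theory.
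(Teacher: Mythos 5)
Your overall strategy --- establish a $p$-adic Goddard uniqueness theorem and use it, together with translation covariance and creativity, to identify the commutator/associator kernels with the fields $Y(u(i)v,z)$ --- is the same skeleton as the paper's proof, which introduces the residue products $a(z)_tb(z)$, shows they are creative $p$-adic fields creating $a(t)b$, and concludes $a(z)_tb(z)=(a(t)b)(z)$ from a uniqueness lemma (Lemma \ref{lemd}). Your $e^{zT}$ computation and the identification $T(a)=a(-2)\bone$ are fine. However, there is a genuine gap at the crux: to apply the uniqueness result to the difference $d(z)=a(z)_tb(z)-(a(t)b)(z)$ (or to your ``candidate commutator kernel''), you must first know that this candidate is a $p$-adic field that is \emph{mutually local with every} $Y(c,y)$. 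You assert this in passing, but it is Dong's lemma, and in the $p$-adic setting it is the most delicate step of the whole argument (Lemma \ref{thmDongslem}): one expands $(x-y)^n$ through an auxiliary variable, splits the resulting sum at $i=-t$, observes that the two tails cancel, and chooses the splitting parameter $m=N_0-t$ so that every surviving term carries a factor $(w-y)^{m-i}[a(w),c(y)]$ of norm less than $\veps$. Without this, the uniqueness lemma has nothing to act on, and the identification of the kernels with $Y(u(i)v,z)$ does not go through.

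A second, smaller problem is your final step: ``the Jacobi identity is obtained by combining [the commutator and associator formulas] exactly as in Section \ref{SSacloc}.'' That section derives those formulas \emph{from} the Jacobi identity; the converse is the content of the paper's Lemma \ref{Jequiv} and is not a formal rearrangement. One propagates the $r=0$ case (associativity) to all $r\geq 0$ via Pascal's rule, uses locality to get the identity to within $\veps$ for all $t\gg 0$, and then runs a maximality argument on $r+t$ together with the strong triangle inequality and the completeness of $V$ to cover all triples $(r,s,t)$. The commutator formula ($t=0$ case) turns out not to be needed; what is needed is precisely this interplay between associativity and locality, which your sketch does not supply.
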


We will give the proof of Theorem \ref{thmconverse} over the course of the next few Sections. Given that $(V, Y, \bone)$ \emph{is} a $p$-adic vertex algebra, it is easy to see that $T$ is its canonical derivation. For, by translation covariance, we get
\[
T(Y(a, z)\bone) = \partial_zY(a, z)\bone,
\]
so that $T(a)=T(a(-1)\bone)=a(-2)\bone$, and this is the definition of the canonical derivation \eqref{Tdef}. What we have to prove is that the Jacobi identity from Definition \ref{d:pva} is a consequence of the assumptions in Theorem \ref{thmconverse}.

\begin{lem}
  \label{Jequiv}
  The Jacobi identity is equivalent to the conjunction of the associativity formula \eqref{assocform} and the locality formula \eqref{locform}.
\end{lem}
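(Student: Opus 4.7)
The forward implication is essentially the content of Subsection \ref{SSacloc}: Proposition \ref{p:associator} gives the associator formula from Jacobi by setting $r=0$, and the proof of the $p$-adic locality proposition above extracts \eqref{locform} from Jacobi by identifying the right-hand side of Jacobi as the coefficient of $x^r y^s$ in $(x-y)^t [Y(u,x), Y(v,y)] w$ and exploiting continuity of $Y$. The plan for this half is simply to collate and cite those arguments.

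For the reverse direction, the plan is to start from the left-hand side of Jacobi at arbitrary $(r,s,t)$ and expand each summand $(u(t+i)v)(r+s-i)w$ via the associator formula \eqref{assocform} applied with parameters $s \mapsto r+s-i$ and $t \mapsto t+i$. This produces a double sum indexed by $(i,j)$; the next step is to interchange the order of summation and invoke Vandermonde-style binomial identities (sums of the form $\sum_i \binom{r}{i}\binom{t+i}{j}(\pm 1)^i$) to collapse the inner sum. The resulting expression should match the right-hand side of Jacobi up to a ``commutator remainder'' that, by a direct calculation mirroring the derivation of $p$-adic locality, is recognizable as the coefficient of $x^r y^s$ in $(x-y)^{t+N} [Y(u,x), Y(v,y)]w$ for every sufficiently large auxiliary parameter $N$. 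The locality hypothesis \eqref{locform} then forces this remainder to be arbitrarily small $p$-adically, hence zero.

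The main obstacle I anticipate is the rigorous handling of the infinite $p$-adic sums that arise from the rearrangement, in particular the interchange of summation order. In the classical archimedean theory these manipulations are trivial because the sums truncate, but here they are genuinely infinite. Fortunately the ultrametric inequality together with Lemma \ref{l:sums} make such rearrangements routine once one verifies that the relevant joint terms tend to zero, and this follows from Property (1) of Definition \ref{d:pfield} (uniform boundedness of modes) combined with the vanishing $u(n)v \to 0$ and $v(n)w \to 0$ supplied by Property (2). Lemma \ref{lemJI} already guarantees convergence of the individual sums on each side of Jacobi, providing the minimum prerequisite for the argument to be well-defined.
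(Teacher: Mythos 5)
Your forward direction is fine and is exactly what the paper does. The reverse direction, however, has a genuine gap at precisely the step you flag as the main obstacle: the rearrangement of the double sum. After substituting the associator formula \eqref{assocform} into $\sum_{i\geq 0}\binom{r}{i}(u(t+i)v)(r+s-i)w$ you obtain terms $a_{ij}=\binom{r}{i}(-1)^{j}\binom{t+i}{j}\,u(t+i-j)v(r+s-i+j)w$ (plus the analogous $vu$ terms), and your Vandermonde collapse amounts to regrouping this family according to the value of $i-j$. In the nonarchimedean setting such a regrouping is legitimate only if the family is unconditionally summable, i.e.\ $a_{ij}\to 0$ jointly; but along the diagonal $i=j$ the operator factor is the \emph{constant} vector $u(t)v(r+s)w$ while $\binom{r}{i}\binom{t+i}{i}$ is a generically nonzero integer, so $\abs{a_{ii}}$ does not tend to $0$. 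Lemma \ref{l:sums} together with Properties (1) and (2) of Definition \ref{d:pfield} gives convergence of the \emph{iterated} sum, not joint summability, and iterated sums of non-summable families cannot be reordered or regrouped. Worse, for $r<0$ the collapsed coefficients $\sum_{j}\binom{r}{j+m}(-1)^{j}\binom{t+j+m}{j}$ are infinite sums of integers with no $p$-adic limit, so the inner sum you propose to evaluate by a binomial identity does not even converge.

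The paper circumvents this entirely. Writing the Jacobi identity as $A(r,s,t)=B(r,s,t)-C(r,s,t)$, it observes that associativity is the case $r=0$, that Pascal's rule gives the exact recurrence $A(r+1,s,t)=A(r,s+1,t)+A(r,s,t+1)$ (and likewise for $B$ and $C$), whence the identity propagates by induction to all $r\geq 0$; no divergent sum over $i$ arises there since $\binom{r}{i}=0$ for $i>r$. For the remaining cases it argues by contradiction: locality \eqref{locform} gives $\abs{A-B+C}<\veps$ for all $t\geq t_0$, so any triple violating this bound has $r<0$ and $t<t_0$, hence one may choose a violating triple with $r+t$ maximal; the recurrence then expresses $A(r,s,t)-B(r,s,t)+C(r,s,t)$ in terms of triples with strictly larger $r+t$, and the strong triangle inequality yields a contradiction. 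If you want to salvage a computational proof along your lines, you would need to restrict the expansion to $r\geq 0$ (where the $i$-sum is finite) and supply a separate mechanism for negative $r$ --- which is exactly what the paper's descent on $r+t$ provides.
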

\begin{proof}
  We have already seen in Subsection \ref{SSacloc} that the Jacobi identity implies the associativity and locality formulas.\ As for the converse, fix states $a, b, c\in V$, $r, s, t\in\ZZ$ and introduce the  notation:
  \begin{align*}
    A(r, s, t)\df&\sum_{i=0}^{\infty} \binom{r}{i} (a(t+i)b)(r+s-i)c,\\
B(r, s, t)\df&\sum_{i=0}^{\infty} (-1)^i\binom{t}{i} a(r+t-i)b(s+i)c,\\
C(r, s, t)\df&\sum_{i=0}^{\infty}    (-1)^{t+i} \binom{t}{i}b(s+t-i)a(r+i)c
\end{align*}
In these terms, the Jacobi identity is just the assertion that for all $a, b, c$ and all $r, s, t$ we have
\begin{align}\label{JIequiv}
A(r, s, t)&=B(r, s, t)-C(r, s, t).
\end{align}
On the other hand, in Subsection \ref{SSacloc} we saw that the associativity formula is just the case $r=0$ of \eqref{JIequiv}. Now use the standard formula
\begin{align*}
\binom{m}{n}&= \binom{m-1}{n} + \binom{m-1}{n-1}
\end{align*}
to see that $A(r+1, s, t)= A(r, s+1, t)+A(r, s, t+1)$, and similarly for $B$ and $C$.\ Consequently, \eqref{JIequiv} holds for all $r\geq0$ and all $s$, $t\in\ZZ$.

Now we invoke the locality assumption. In fact, we essentially saw in Subsection \ref{SSacloc} that locality is equivalent to the statement that for any $\veps>0$ there is an integer $t_0$ such that for all $t\geq t_0$ we have 
\begin{align}\label{JIineq}
\abs{A(r, s, t)-B(r, s, t)+C(r, s, t)}<\veps,
\end{align}
and we assert that \eqref{JIineq} holds uniformly for all $r$, $s$, $t$. We have explained that it holds for all $t\geq t_0$ and all $r\geq0$, so if there is a triple  $(r, s, t)$ for which it is false
then there is a triple for which $r+t$ is \emph{maximal}. But we have
\begin{align*}
&\abs{A(r, s, t)-B(r, s, t)+C(r, s, t)} \\
=&\left|A(r+1, s-1, t)-A(r, s-1, t+1) -B(r+1, s-1, t)+B(r, s-1, t+1) \right.\\
  & \left. +C(r+1, s-1, t)-C(r, s-1, t+1)\right|\\
  <& \veps
\end{align*}
by the strong triangle inequality. This completes the proof that \eqref{JIineq} holds for all $r$, $s$, $t$ and all $a$, $b$, $c\in V$. Now because $V$ is complete we can invoke Remark \ref{JIrmk} to complete the proof of the Lemma.
\end{proof}

\subsection{Residue products}
Lemma \ref{Jequiv} facilitates reduction of the proof of Theorem \ref{thmconverse} to the assertion that \emph{locality} implies \emph{associativity}.\ With this in mind we introduce \emph{residue products of fields} following \cite{LianZuckerman}. To ease notation, in the following for a state $a\in V$, we write $a(z)=Y(a, z)=\sum_n a(n)z^{-n-1}$, etc. For states $a$, $b$, $c\in V$ and any integer $t$, we define the $t^{th}$ residue product of $a(z)$ and $b(z)$ as follows:
\begin{align}\label{rpmode}
(a(z)_tb(z))(n) \df& \sum_{i=0}^{\infty} (-1)^i\binom{t}{i} \left\{a(t-i)b(n+i)-(-1)^tb(t+n-i)a(i)\right\}.
\end{align}
To be clear, $a(z)_tb(z)$ is defined to have a modal expansion with the $n^{th}$ mode being the expression \eqref{rpmode} above. The holistic way to write \eqref{rpmode} is as follows:
\begin{align}
  \label{rpalt}
a(z)_tb(z)=&\Res_y (y-z)^t a(y)b(z)-(-1)^t\Res_y (z-y)^tb(z)a(y).
\end{align}
It is important to add that here we are employing the convention\footnote{In general, expand $(z-w)^n$ as a power series in the second variable. This is inconsequential if $n\geq 0$.} that $(y-z)^t$ is expanded as a power series in $z$, whereas $(z-y)^t$ is expanded as a power series in $y$.

Notice that in a $p$-adic vertex algebra, the associativity formula \eqref{assocform} may be provocatively reformulated in the following way:
\begin{align}
  \label{rpproduct}
Y(u(t)v, z)w&= Y(u, z)_tY(v, z).
\end{align}

In the present context we do not know that $V$ is a vertex algebra: indeed, it is equation \eqref{rpproduct} that we are in the midst of proving on the basis of locality alone! Nevertheless, it behooves us to scrutinize residue products. First we have
\begin{lem}
  \label{lemfd}
  Given $a(z)$, $b(z)\in\cF(V)$, we have $a(z)_tb(z)\in\cF(V)$ for every $t \in \ZZ$. Moreover,
  \[
  \abs{a(z)_tb(z)} \leq \abs{a(z)}\abs{b(z)}.
  \]
\end{lem}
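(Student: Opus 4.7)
The plan is to verify the two axioms of Definition \ref{d:pfield} for the series $f(z) \df a(z)_t b(z)$ defined by the modal formula \eqref{rpmode}, and simultaneously obtain the claimed bound $|f(z)| \leq |a(z)||b(z)|$. Two observations streamline the estimates. First, since $|a(m)| \leq \sup_m |a(m)| = |a(z)|$ and similarly for $b$, we have $|a(m)c| \leq |a(z)||c|$ and $|b(m)c| \leq |b(z)||c|$ for every integer $m$ and every vector $c \in V$. Second, the $p$-adic estimate $|\binom{t}{i}| \leq 1$ lets me ignore the binomial factors when taking ultrametric suprema.

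First I would show that, for each $n \in \ZZ$ and $c \in V$, the series defining $f(n)c$ converges in $V$. By Lemma \ref{l:sums} it suffices to check that the $i$-th summand tends to zero in norm. For the first term inside the braces, Property (2) for $b(z)$ gives $b(n+i)c \to 0$ as $i \to \infty$, and then $|a(t-i)b(n+i)c| \leq |a(z)||b(n+i)c| \to 0$. Similarly, Property (2) for $a(z)$ yields $a(i)c \to 0$, hence $|b(t+n-i)a(i)c| \leq |b(z)||a(i)c| \to 0$. With convergence in hand, the strong triangle inequality applied to the convergent sum gives
\[
|f(n)c| \leq \sup_{i\geq 0}\max\bigl(|a(t-i)b(n+i)c|,\ |b(t+n-i)a(i)c|\bigr) \leq |a(z)||b(z)||c|.
\]
Taking the supremum over nonzero $c$ yields $|f(n)| \leq |a(z)||b(z)|$ for every $n$, so Property (1) of Definition \ref{d:pfield} holds and $|f(z)| \leq |a(z)||b(z)|$.

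The main obstacle is Property (2), namely $\lim_{n \to \infty} f(n)c = 0$ for each fixed $c \in V$. The plan is to split the index $i$ at a cutoff chosen so that the tail is controlled uniformly in $n$ while the head consists of finitely many terms each going to zero with $n$. One may assume $|a(z)|$ and $|b(z)|$ are nonzero, the zero case being trivial. Given $\veps > 0$, first choose $I$ with $|a(i)c| < \veps/|b(z)|$ for $i \geq I$, using Property (2) for $a(z)$; then $|b(t+n-i)a(i)c| \leq |b(z)||a(i)c| < \veps$ holds for every $n$ and every $i \geq I$. Next choose $N_1$ with $|b(m)c| < \veps/|a(z)|$ for $m \geq N_1$; then for $n \geq N_1$ and all $i \geq 0$ we have $|a(t-i)b(n+i)c| \leq |a(z)||b(n+i)c| < \veps$.

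It remains to handle the tail of the second term for the finitely many indices $0 \leq i < I$: for each such $i$, the vector $a(i)c$ is fixed, so Property (2) for $b(z)$ produces an integer $N_2(i)$ with $|b(t+n-i)a(i)c| < \veps$ whenever $n \geq N_2(i)$. Setting $N_2 = \max_{0 \leq i < I} N_2(i)$, a maximum over a finite set, we conclude that for $n \geq \max(N_1, N_2)$ every summand in the series for $f(n)c$ has absolute value strictly less than $\veps$. The strong triangle inequality then gives $|f(n)c| < \veps$, which establishes Property (2) and completes the plan.
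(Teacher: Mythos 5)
Your proof is correct and follows the same overall strategy as the paper: verify the two properties of Definition \ref{d:pfield} directly from the modal formula \eqref{rpmode}, using $\abs{\binom{t}{i}}\leq 1$ and the ultrametric inequality to get the bound $\abs{a(z)_tb(z)}\leq\abs{a(z)}\abs{b(z)}$, which simultaneously yields Property (1). Where you differ is in the treatment of Property (2), and your version is actually the more careful one. The paper disposes of this step in a single displayed estimate bounding $\abs{(a(z)_tb(z))(n)c}$ by $M\sup_i\{\abs{b(n+i)c},\abs{b(t+n-i)c}\}$; as written this is too terse, since for the term $b(t+n-i)a(i)c$ the operator $b(t+n-i)$ acts on $a(i)c$ rather than on $c$, and the index $t+n-i$ ranges over all integers $\leq t+n$ as $i$ varies, so the displayed supremum does not visibly tend to zero. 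Your splitting of the sum at a cutoff $I$ --- controlling the tail $i\geq I$ uniformly in $n$ via $\abs{a(i)c}\to 0$, and then letting $n\to\infty$ for the finitely many fixed vectors $a(i)c$ with $i<I$ --- is exactly the argument needed to make this step rigorous. So your write-up supplies the detail the paper's proof glosses over; no changes are needed.
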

\begin{proof}
  From equation \eqref{rpmode} we obtain the bound
  \begin{align*}
    \abs{a(z)_tb(z)(n)} \leq& \sup_{i\geq 0}\left(\sup\{\abs{a(t-i)b(n+i)},\abs{b(t+n-i)a(i)}\}\right)\\
  \leq& \sup_{i\geq 0}\left(\sup\{\abs{a(t-i)}\abs{b(n+i)},\abs{b(t+n-i)}\abs{a(i)}\}\right)\\
  \leq&  \sup_{u\in\ZZ}\abs{a(u)} \cdot \sup_{v\in\ZZ}\abs{b(v)} = \abs{a(z)}\abs{b(z)}.
  \end{align*}
  
This establishes the desired inequality, and it shows that property (1) of Definition \ref{d:pfield} holds with $M= \abs{a(z)}$. Similarly, since we know that $\lim_{n\to \infty} a(n)c=\lim_{n\to \infty} b(n)c=0$ then
\begin{align*}
\abs{(a(z)_tb(z))(n)c} &= \sup_i \abs{a(t-i)b(n+i)c-(-1)^tb(t+n-i)a(i)c} \\
  &\leq  M\sup_i \left\{ \abs{b(n+i)c}, \abs{b(t+n-i)c}\right\}\\
  &\rightarrow_n 0.
\end{align*}
which establishes property (2) of Definition \ref{d:pfield}. The Lemma is proved.
\end{proof}

\begin{rmk}
Lemma \ref{lemfd} has the following interpretation: identify $V$ with its image in $\cF(V)$ under the vertex operator map $a\mapsto a(z)$, and define a new vertex operator structure on these fields using the  residue products. Then Lemma \ref{lemfd} says that these new fields are submultiplicative in the sense of Definition \ref{d:pfieldsn}.  This is so even though we have not assumed that $V$ is submultiplicative, cf. Corollary \ref{corVW} for further discussion of this point.
\end{rmk}

Now recall that we are assuming the hypotheses of Theorem \ref{thmconverse} that $a(z)$, $b(z)$ are creative with respect to $\bone$ and, in particular, $b(z)$ creates the state $b$ in the sense that $b(z)\bone=b+O(z)$.
\begin{lem}\label{lemresprodcreate} For all integers $t$, $a(z)_tb(z)$ is creative with respect to $\bone$ and creates the state $a(t)b$.
\end{lem}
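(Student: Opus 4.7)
The plan is to substitute $\bone$ directly into the modal formula \eqref{rpmode} and use the creativity hypothesis (b) of Theorem \ref{thmconverse}, which tells us both that $a(i)\bone = 0$ for $i \geq 0$ and that $b(m)\bone = \delta_{m,-1}\, b$ for $m \geq -1$. Because Lemma \ref{lemfd} already guarantees that $a(z)_t b(z) \in \cF(V)$, the modes are honest continuous endomorphisms of $V$, so this substitution is legitimate and no separate convergence analysis is required.

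First I would compute, for an arbitrary integer $n$,
\[
(a(z)_t b(z))(n)\bone \;=\; \sum_{i=0}^{\infty} (-1)^i \binom{t}{i}\bigl\{ a(t-i)\,b(n+i)\bone \;-\; (-1)^t b(t+n-i)\,a(i)\bone \bigr\}.
\]
The second summand vanishes term-by-term because $i \geq 0$ and $a$ is creative, collapsing the expression to the single series
\[
(a(z)_t b(z))(n)\bone \;=\; \sum_{i=0}^{\infty} (-1)^i \binom{t}{i}\, a(t-i)\,b(n+i)\bone.
\]
Next I would apply creativity of $b$: the factor $b(n+i)\bone$ is zero unless $n+i = -1$, in which case it equals $b$. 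Since $i \geq 0$, this forces $n \leq -1$ and $i = -n-1$.

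Two cases then finish the argument. For $n \geq 0$, there is no admissible $i$, so $(a(z)_t b(z))(n)\bone = 0$; this is precisely the vanishing-at-nonnegative-modes part of creativity. For $n = -1$, the only surviving term is $i=0$, which contributes $a(t)\,b$, so $(a(z)_t b(z))(-1)\bone = a(t)\,b$. The modes with $n \leq -2$ give possibly nonzero coefficients of $z^{-n-1}$ with $-n-1 \geq 1$, which simply constitute the $O(z)$ tail. Assembling these cases yields $(a(z)_t b(z))\bone \in a(t)b + z\,\pseries{V}{z}$, which is the required creativity statement with created state $a(t)b$.

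There is no genuine obstacle: the computation is formally identical to the corresponding archimedean VOA argument (e.g.\ as in \cite{LianZuckerman}), and the potentially worrisome infinite sums in \eqref{rpmode} actually truncate to finite sums once we act on $\bone$, so neither completeness nor the ultrametric plays a role beyond justifying that $a(z)_t b(z)$ is a $p$-adic field in the first place.
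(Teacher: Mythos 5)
Your proposal is correct and follows essentially the same route as the paper's own proof: substitute $\bone$ into the modal formula \eqref{rpmode}, kill the second summand using $a(i)\bone=0$ for $i\geq 0$, and then use creativity of $b(z)$ to see that the remaining series vanishes for $n\geq 0$ and reduces to the single term $a(t)b$ when $n=-1$. The only cosmetic caveat is that $b(m)\bone$ need not vanish for $m\leq -2$, so the blanket statement ``$b(n+i)\bone$ is zero unless $n+i=-1$'' should be read, as you in fact do, only in the range $n\geq -1$ where $n+i\geq -1$; this does not affect the argument.
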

\begin{proof} This is straightforward, for we have
\begin{align*}
(a(z)_tb(z))(n)\bone =& \sum_{i=0}^{\infty} (-1)^i\binom{i}{i}\left\{a(t-i)b(n+i)\bone-(-1)^tb(t+n-i)a(i)\bone\right\}\\
=& \sum_{i=0}^{\infty} (-1)^i\binom{t}{i}\left\{a(t-i)b(n+i)\bone\right\}
\end{align*}
and this vanishes if $n\geq0$ because $b(z)$ is creative with respect to $\bone$.\ Finally, if $n=-1$, for similar reasons the last display reduces to $a(t)b$, and the Lemma is proved.
\end{proof}

\subsection{Further properties of residue products}
\begin{lem}\label{thmDongslem} Let $a, b, c\in V$, so that $a(z)$, $b(z)$, $c(z)$ are mutually local $p$-adic fields. Then for any integer $t$, $a(z)_tb(z)$ and $c(z)$ are also mutually local.
\end{lem}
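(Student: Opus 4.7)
The plan is to adapt the classical proof of Dong's Lemma to the $p$-adic setting, where the classical vanishing condition $(z-w)^N[\alpha(z),\beta(w)] = 0$ is replaced by the $p$-adic limit condition that for every fixed $v\in V$ the coefficients of $(z-w)^N[\alpha(z),\beta(w)]v$ tend to $0$ as $N\to\infty$.

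First I would fix $v\in V$ and integers $r$, $s$, aiming to show that the coefficient of $z^r w^s$ in $(z-w)^M[(a(z)_tb(z)),c(w)]v$ converges to $0$ in $V$ as $M\to\infty$. Starting from the residue expression \eqref{rpalt}, I would pull $(z-w)^M$ inside the residue $\Res_y$ and expand the two inner commutators $[a(y)b(z),c(w)]$ and $[b(z)a(y),c(w)]$ via $[AB,C]=A[B,C]+[A,C]B$. This yields four types of terms: two in which the inner commutator is $[b(z),c(w)]$, and two in which it is $[a(y),c(w)]$.

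For the two terms featuring $[b(z),c(w)]$, the factor $(z-w)^M$ already sits in front of the inner commutator. Combining $p$-adic locality of the pair $(b,c)$ with the boundedness of the modes of $a$ from Property (1) of Definition \ref{d:pfield} should show that each fixed coefficient becomes $p$-adically small for large $M$, and taking $\Res_y$ preserves this smallness. For the two terms involving $[a(y),c(w)]$, my plan is to use the binomial identity $(z-w)^M=\sum_{j=0}^M\binom{M}{j}(z-y)^j(y-w)^{M-j}$ and to split the sum into two ranges: when $M-j\geq N$, the factor $(y-w)^{M-j}$ supplies the locality factor for the pair $(a,c)$; when $M-j<N$ (so $j>M-N$), the combined factor $(z-y)^{j+t}$ can be made arbitrarily large, and locality of the pair $(a,b)$ enters by writing $[a(y),c(w)]=a(y)c(w)-c(w)a(y)$ and commuting $a(y)$ past $b(z)$ up to $p$-adically small corrections.

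The hard part will be this second subcase, in which locality of $(a,b)$ has to be applied to an expression whose visible commutator is $[a(y),c(w)]$ rather than $[a(y),b(z)]$. A short manipulation should move $a(y)$ past $b(z)$ using $(z-y)$-locality of $(a,b)$, introducing correction terms that then need to be bounded using the remaining locality relations together with the uniform boundedness of the modes. For each fixed coefficient $(r,s)$ only finitely many such corrections appear, and each tends to $0$ $p$-adically by one of the three locality hypotheses, so the strong triangle inequality will yield the desired smallness after applying $\Res_y$, completing the argument.
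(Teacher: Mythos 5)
Your architecture --- start from \eqref{rpalt}, split the commutators via $[AB,C]=A[B,C]+[A,C]B$, kill the $[b(z),c(w)]$-terms using $(b,c)$-locality plus uniform boundedness of the modes of $a$, then attack the $[a(y),c(w)]$-terms with a binomial expansion --- is the paper's, and everything up to the ``hard subcase'' you yourself flag is sound. The gap is exactly there, and it is concrete: you expand \emph{all} of $(z-w)^M$ as $\sum_{j}\binom{M}{j}(z-y)^j(y-w)^{M-j}$, so in the range $j>M-N$ the only factors in hand are $(z-y)^{j+t}$ and a low power of $(y-w)$; no power of $(z-w)$ survives. The manipulation you then propose does not close. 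Commuting $a(y)$ past $b(z)$ via $(z-y)^{j+t}$-locality of $(a,b)$ reduces the discrepancy between the two residue pieces to terms of the shape $[a(y),[c(w),b(z)]]$ (equivalently: for $j+t\ge 0$ the two pieces combine into $[[a(y),c(w)],b(z)]$, and the identity $[[a,c],b]=[[a,b],c]+[a,[c,b]]$ leaves the second summand untouched by $(z-y)$-locality). Making $[c(w),b(z)]$ small requires a large power of $(z-w)$, and you have spent it all; bounded modes of $a$ and integrality of binomial coefficients do not rescue this, since $[c(w),b(z)]$ carries no smallness on its own.

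The repair is to reserve a growing power of $(z-w)$ before expanding: write $(z-w)^M=(z-w)^{M_1}\sum_{j=0}^{M_2}\binom{M_2}{j}(z-y)^j(y-w)^{M_2-j}$ with $M_1,M_2\to\infty$. In the range $M_2-j\ge N$ your $(a,c)$-locality argument goes through unchanged; in the complementary range both $(z-y)^{j+t}$ and the reserved $(z-w)^{M_1}$ are large, so $[[a(y),b(z)],c(w)]$ is handled by $(a,b)$-locality and $[a(y),[c(w),b(z)]]$ by $(b,c)$-locality, each combined with uniform boundedness of the remaining field's modes and the strong triangle inequality after taking $\Res_y$. This is the classical Dong's-lemma bookkeeping transplanted to the asymptotic setting. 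The paper's own proof of Lemma \ref{thmDongslem} makes a different choice in this range: it fixes $m=N_0-t$ once and for all, reserves the large factor $(x-y)^{n-m}$, and then argues that for $t+i\ge 0$ the sign identity $(-1)^{t+i}(w-x)^{t+i}=(x-w)^{t+i}$ makes the two residue pieces cancel, so that $(a,b)$-locality is never invoked. Your route is closer to the classical proof and genuinely uses all three locality hypotheses; either way, the reserved $(z-w)$-power is the missing ingredient in your write-up.
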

\begin{proof}
  We have to show that $\lim_{n\to\infty} (x-y)^n[a(x)_tb(x), c(y)]=0$. We have
\begin{align*}
& \lim_{n\to\infty} (x-y)^n[a(x)_tb(x), c(y)]\\
=&\lim_{n\to\infty} (x-y)^n   \left\{\Res_w (w-x)^t [a(w)b(x), c(y)] - (-1)^t\Res_w (x-w)^t[b(x)a(w), c(y)]\right\}\\
=&\lim_{n\to\infty} (x-y)^n  \Res_w (w-x)^t \left\{a(w)[b(x), c(y)] + [a(w), c(y)]b(x) \right\}-\\
&(-1)^t  \lim_{n\to\infty} (x-y)^n \Res_w (x-w)^t  \left\{ b(x)[a(w), c(y)]+ [b(x), c(y)]a(w)\right\}\\
=&\lim_{n\to\infty} (x-y)^n \Res_w (w-x)^t \left\{ [a(w), c(y)]b(x) \right\}-\\
&(-1)^t  \lim_{n\to\infty} (x-y)^n \Res_w (x-w)^t  \left\{ b(x)[a(w), c(y)]\right\},
\end{align*}
where we used the fact that $b(z)$ and $c(z)$ are mutually local $p$-adic fields.

Note that if $t\geq0$ then $(w-x)^t=(-1)^t(x-w)^t$ and the last expression vanishes as required.\ So we may assume without loss that $t<0$.

Now use the identity, with $n\geq m\geq 0$,
\begin{align*}
(x-y)^n =(x-y)^{n-m} (x-w+w-y)^{m}=(x-y)^{n-m}\sum_{i=0}^{m} \binom{m}{i} (x-w)^i(w-y)^{m-i}
\end{align*}
Pick any $\veps>0$.\ There is a positive integer $N_0$ such that $\left\vert (w-y)^N[a(w), b(y)]\right\vert < \veps$ whenever $N\geq N_0$.\ We will
choose $m\df N_0-t$, which is nonnegative because $t<0$. The first summand in the last displayed equality is equal to 
\begin{align*}
&  \sum_{i=0}^{m} (-1)^i\binom{m}{i}  \lim_{n\to\infty} (x-y)^{n-m}\Res_w   (w-x)^{t+i}(w-y)^{m-i} \left\{ [a(w), c(y)]b(x) \right\}\\
=&\sum_{i=0}^{-t}``" +\sum_{i=-t+1}^m``",
\end{align*}
where the quotation marks indicate that we are summing the same general term as in the left-side of the equality. We get a very similar formula for the second summand. 

Repeating the use of a previous device, if $-t+1\leq i$ then $t+i\geq 0$, so that $(-1)^{t+i}(w-x)^{t+i}=(x-w)^{t+i}$.\ From this it follows that in our expression for $\lim_{n\to\infty} (x-y)^n[a(x)_tb(x), c(y)]$ the two sums over the range $-t+1\leq i\leq m$ in fact \emph{cancel}. Hence we find that
\begin{align*}
&\lim_{n\to\infty} (x-y)^n[a(x)_tb(x), c(y)]\\
=&\sum_{i=0}^{-t}  (-1)^i\binom{m}{i}\left\{ \lim_{n\to\infty} (x-y)^{n-m}\Res_w   (w-x)^{t+i}(w-y)^{m-i}  [a(w), c(y)]b(x) \right.-\\
&\left.(-1)^{t}\lim_{n\to\infty} (x-y)^{n-m}\Res_w   (x-w)^{t+i}(w-y)^{m-i}  b(x)[a(w), c(y)]\right\}.
\end{align*}
For $i \leq -t$ we have $m-i\geq m+t=N_0$ and therefore each of the expressions $(w-y)^{m-i}[a(w), c(y)]$ have norms less than $\veps$. Therefore we see that \[\lim_{n\to\infty} (x-y)^n[a(x)_tb(x), c(y)]=0,\] and the Lemma is proved.
\end{proof}

\begin{lem}\label{rptranscov}If $a(z)$ and $b(z)$ are translation covariant with respect to $T$, then  $a(z)_tb(z)$ is translation covariant with respect to $T$ for all integers $t$.
\end{lem}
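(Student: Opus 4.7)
The plan is a direct modal computation. Translation covariance of $a(z)$ with respect to $T$ is equivalent to the modal relations $[T, a(m)] = -m\, a(m-1)$ for all $m \in \ZZ$, and similarly for $b$. What I need to show is the analogous identity for the residue product, namely
\[
[T, (a(z)_t b(z))(n)] = -n \, (a(z)_t b(z))(n-1), \qquad n \in \ZZ,
\]
which is precisely the modal form of $[T, a(z)_t b(z)] = \partial_z(a(z)_t b(z))$.

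First I would apply $[T, -]$ to the mode expansion \eqref{rpmode} of $(a(z)_t b(z))(n)$ term-by-term, using the Leibniz rule for commutators together with the modal translation covariance of $a$ and $b$. This produces, inside the sum over $i$, four summands according to which of $a(t-i)$, $b(n+i)$, $b(t+n-i)$, $a(i)$ the operator $T$ has landed on. Convergence of the resulting sum is not an issue: $T$ is continuous, the original sum converges by Lemma \ref{lemfd}, and each new summand is controlled by the same uniform bound.

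Next comes the purely combinatorial step. Grouping the two $a\otimes b$ summands, a reindexing $i \mapsto i+1$ in the one where $T$ hit $a$, combined with the identity $(t-i+1)\binom{t}{i-1} = i\binom{t}{i}$, transforms the coefficient into $-n \binom{t}{i}$, yielding exactly the $a \otimes b$ portion of $-n \,(a(z)_t b(z))(n-1)$. Grouping the two $b\otimes a$ summands, a reindexing $i \mapsto i-1$ in the one where $T$ hit $a$, combined with $(i+1)\binom{t}{i+1} = (t-i)\binom{t}{i}$, collapses the coefficient to $n\binom{t}{i}$; together with the sign $-(-1)^t$ out front, this is the $b \otimes a$ portion of $-n\,(a(z)_t b(z))(n-1)$. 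Summing the two contributions completes the proof.

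Alternatively, one can argue in the holistic form \eqref{rpalt}: bring $T$ through both residues using Leibniz, then use formal integration by parts $\Res_y f(y) \partial_y a(y) = -\Res_y f'(y) a(y)$ together with $\partial_y(y-z)^t = -\partial_z(y-z)^t$ and $\partial_y(z-y)^t = -\partial_z(z-y)^t$ to match the $\partial_y a$-type contributions directly against the $\partial_z$-derivatives appearing in $\partial_z(a(z)_t b(z))$. The main obstacle in either approach is merely careful bookkeeping of signs and binomial identities; there is no conceptual difficulty.
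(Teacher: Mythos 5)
Your proof is correct and amounts to the same direct computation the paper invokes: the paper packages it as the two Leibniz rules \eqref{rptc} for $[T,-]$ and $\partial_z$ acting on residue products, each "readily checked by direct calculation," and your modal bookkeeping (with the binomial identities $(t-i+1)\binom{t}{i-1}=i\binom{t}{i}$ and $(i+1)\binom{t}{i+1}=(t-i)\binom{t}{i}$) is exactly that calculation carried out for the combined statement. Your remark on convergence is also consistent with the paper's framework, since $T\in\Endo(V)$ is continuous by Definition \ref{d:continuous} and the modes are uniformly bounded by Lemma \ref{lemfd}.
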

\begin{proof} This is a straightforward consequence of the next two equations, both readily checked by direct caculation:
\begin{align}\label{rptc}
[T, a(z)_tb(z)]=&[T, a(z)]_tb(z)+a(z)_t[T, b(z)]   \notag  \\
 \partial_z(a(z)_tb(z))=&(\partial_za(z))_tb(z)+ a(z)_t(\partial_zb(z))
\end{align}

\end{proof}
\subsection{Completion of the proof of Theorem \ref{thmconverse}}
We have managed to reduce the proof of Theorem \ref{thmconverse} to showing that $p$-adic locality implies associativity, i.e., the formula \eqref{rpproduct} in the format
\begin{align}\label{assocnew}
(a(t)b)(z)=&a(z)_tb(z)
\end{align}
We shall carry this out now on the basis of Lemmas \ref{lemfd} through \ref{thmDongslem}. We need one more preliminary result.

\begin{lem}\label{lemd}
  Suppose that $d(z)$ is a $p$-adic field that is translation covariant with respect to $T$;  mutually local with all $p$-adic fields $a(z)\ (a\in V)$; and creative with respect to $\bone$.\ Then $d(z)=0$ if, and only if, $d(z)$ creates $0$.
\end{lem}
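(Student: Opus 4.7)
The ``only if'' direction is trivial, so I focus on the converse: assume that $d(z)$ creates $0$. The plan is to deduce $d(z)\equiv 0$ in two stages.

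\textbf{Stage 1: $d(z)\bone=0$.} Creativity of $d(z)$ with respect to $\bone$ gives $d(n)\bone=0$ for $n\geq 0$, and the hypothesis that $d(z)$ creates $0$ gives $d(-1)\bone=0$. Translation covariance $[T,d(z)]=\partial_z d(z)$ unpacks to $[T,d(n)]=-n\,d(n-1)$, and applying this to $\bone$, together with $T\bone=0$, produces the recursion $T\,d(n)\bone=-n\,d(n-1)\bone$. Starting from $d(-1)\bone=0$, a downward induction on $n$ shows $d(n)\bone=0$ for every $n\leq-1$, and hence for every $n\in\ZZ$.

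\textbf{Stage 2: $d(z)v=0$ for all $v\in V$.} Fix $v\in V$. Mutual locality of $d(x)$ with $Y(v,y)$ provides
\[
\lim_{t\to\infty}(x-y)^t\,[d(x),Y(v,y)]\,\bone=0.
\]
Stage 1 collapses the commutator to $d(x)\,Y(v,y)\,\bone$, since $Y(v,y)\,d(x)\bone=0$. Creativity of $Y(v,y)$ ensures $Y(v,y)\bone=v+y\,v(-2)\bone+y^2\,v(-3)\bone+\cdots\in v+y\pseries{V}{y}$, so the coefficient of $y^0$ in $(x-y)^t\,d(x)\,Y(v,y)\,\bone$ equals $x^t\,d(x)v$, whose coefficient of $x^p$ is $d(t-p-1)v$. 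For any prescribed mode index $n\in\ZZ$ I then pick $p$ so that $t-p-1=n$, let $t\to\infty$, and read off $d(n)v=0$. Since $v$ and $n$ are arbitrary, $d(z)=0$.

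\textbf{Main obstacle.} The delicate step is the passage from the asymptotic locality relation to the pointwise vanishing $d(n)v=0$ at a fixed mode. Classically, exact locality $(x-y)^N[d(x),Y(v,y)]=0$ lets one simply set $y=0$; $p$-adically one has only an asymptotic statement, and the extraction of $d(n)v=0$ at a fixed $n$ requires that the coefficientwise decay in $(x-y)^t\,d(x)\,Y(v,y)\bone$ be uniform in the coefficient index. This uniformity is available precisely because $|\binom{t}{i}|_p\leq 1$ and the strong triangle inequality dominate the locality sum by its maximum term, in the same spirit as in the derivation of $p$-adic locality from the Jacobi identity in Section~\ref{s:goddard}.
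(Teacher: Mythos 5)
Your proof is correct and follows essentially the same route as the paper: translation covariance together with $d(-1)\bone=0$ gives $d(z)\bone=0$ by downward induction, and then locality applied to $d(x)Y(v,y)\bone$ with the $y^0$-coefficient extracted yields $d(n)v=0$ for all $n$. The paper phrases the last step as $\lim_{t\to\infty} z^t d(z)v = 0$ combined with the observation that multiplication by $z^t$ preserves the sup-norm, which is exactly the uniformity-in-the-coefficient-index point you flag in your closing paragraph.
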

\begin{proof}
  It suffices to show that if $d(z)$ creates $0$, i.e., $d(-1)\bone=0$, then $d(z)=0$.

To begin the proof, recall that by hypothesis we have $T\bone=0$.\ Then 
\begin{align*}
[T, d(z)]\bone=&T(d(z)\bone)= \sum_{n<0} T (d(n)\bone) z^{-n-1}.
\end{align*}
By translation covariance we also have
\begin{align*}
[T, d(z)]\bone = \partial_z d(z)\bone = \sum_{n<0}  (-n-1)d(n)\bone z^{-n-2}
\end{align*}
Therefore, for all $n<-1$ we have $Td(n+1)\bone = (-n-1)d(n)\bone$.\ But $d(-1)\bone=0$ by assumption.\ Therefore $d(-2)\bone=0$, and by induction we find that
for all $n<0$ we have $d(n)\bone=0$.\ This means that $d(z)\bone=0$.

Now we prove that in fact $d(z)=0$.\ For any state $u\in V$ we know that $u(z)$ creates $u$ from $\bone$.\ Moreover we are assuming that $d(z)$ and $u(z)$ are mutually local, i.e., $\lim_{n\to\infty} (z-w)^n[d(z), u(w)]=0$. Consider
 \begin{align*}
\lim_{n\to\infty}z^nd(z)u  =&  \lim_{n\to\infty}\Res_w w^{-1}(z-w)^n d(z) u(w)\bone\\
=& \lim_{n\to\infty}\Res_w w^{-1}(z-w)^n u(w)d(z)\bone=0.
 \end{align*}
 Notice that the sup-norms of $d(z)u$ and $z^nd(z)u$ are equal, since multiplication by $z^n$ just shifts indices.  Therefore, the only way that the limit above can vanish is if $d(z)u=0$. Since $u$ is an arbitrary element of $V$, we deduce that $d(z)=0$, and this completes the proof of Lemma \ref{lemd}.
\end{proof}

Turning to the proof of Theorem \ref{thmconverse}, suppose that $a, b\in V$, let $t$ be an integer, and consider $d(z)\df a(z)_tb(z)-(a(t)b)(z)$. This is a $p$-adic field by Lemma \ref{lemfd}, and it is creative by Lemma \ref{lemresprodcreate}, indeed that Lemma implies that $d(z)$ creates $0$. Now Lemmas \ref{lemresprodcreate} through \ref{rptranscov} supply the hypotheses of Lemma \ref{lemd}, so we can apply the latter to see that $d(z)=0$. In other words, $a(z)_tb(z) = (a(t)b)(z)$. Thus we have established \eqref{assocnew} and the proof of Theorem \ref{thmconverse} is complete.

We record a Corollary of the proof that was hinted at during our discussion of equation \eqref{rpproduct}.
\begin{cor}\label{corVW}
  Suppose that $(V, Y, \bone)$ is a $p$-adic vertex algebra, and let $W\subseteq \cF(V)$ be the image of $Y$ consisting of the $p$-adic fields $Y(a, z)$ for $a\in V$. If we define the $t^{th}$ product of these fields by the residue product $Y(a, z)_tY(b, z)$, then $W$ is a \emph{submultiplicative} $p$-adic vertex algebra with vacuum state $\id_V$ and canonical derivation $\partial_z$. Moreover, $Y$ induces a bijective, continuous map of $p$-adic vertex algebras $Y: V \longrightarrow W$ that preserves all products.
\end{cor}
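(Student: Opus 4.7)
The plan is to verify each axiom of a submultiplicative $p$-adic vertex algebra for the triple $(W, \tilde Y, \id_V)$ by translating everything back to $V$ using the associativity relation \eqref{assocnew}, namely $Y(a,z)_t Y(b,z) = Y(a(t)b, z)$, which was established in the course of proving Theorem \ref{thmconverse}. Since Proposition \ref{p:closedmap} tells us that $Y$ is injective and closed, $W = Y(V)$ is a closed subspace of $\cF(V)$, hence itself a $p$-adic Banach space in the inherited norm.

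First I would define the state-field correspondence $\tilde Y \colon W \to \pseries{\Endo(W)}{\zeta, \zeta^{-1}}$ by $\tilde Y(Y(a,z), \zeta) \df \sum_{t \in \ZZ} Y(a,z)_t\, \zeta^{-t-1}$, where $Y(a,z)_t$ is viewed as the operator on $W$ sending $Y(b,z) \mapsto Y(a,z)_t Y(b,z)$. By \eqref{assocnew} this operator does take $W$ into $W$, so $\tilde Y$ is well-defined. To check that each $\tilde Y(Y(a,z), \zeta)$ is a submultiplicative $p$-adic field on $W$, I would invoke Lemma \ref{lemfd}, which gives the uniform bound $\abs{Y(a,z)_t Y(b,z)} \leq \abs{Y(a,z)}\abs{Y(b,z)}$, and then use $Y(a,z)_t Y(b,z) = Y(a(t)b, z) \to 0$ in $\cF(V)$ as $t \to \infty$, valid because $a(t)b \to 0$ in $V$ and $Y$ is continuous.

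Next I would verify the remaining axioms of Definition \ref{d:pva}. Vacuum normalization is immediate since $\abs{\id_V} = 1$, and Theorem \ref{thmvac} identifies $\id_V$ with $Y(\bone, z) \in W$. Creativity follows directly from Lemma \ref{lemresprodcreate}: $Y(a,z)_t(\id_V) = Y(a,z)_t Y(\bone, z) = Y(a(t)\bone, z)$ vanishes for $t \geq 0$ and equals $Y(a,z)$ for $t = -1$. For the Jacobi identity in $W$, applying \eqref{assocnew} repeatedly rewrites each residue-product expression appearing in the identity as $Y$ of the corresponding mode expression in $V$; the Jacobi identity for $W$ is thus the image under $Y$ of the Jacobi identity for $V$, and transports because $Y$ is linear and the relevant sums converge in $\cF(V)$ by the $p$-adic field property.

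Finally, continuity of $\tilde Y$ is automatic from submultiplicativity via Lemma \ref{l:Ycont}; $Y \colon V \to W$ is bijective by Proposition \ref{p:closedmap} and the definition of $W$; and product preservation is exactly the content of \eqref{assocnew}. The canonical derivation of $W$ sends $Y(a,z)$ to $Y(a,z)_{-2}(\id_V) = Y(a(-2)\bone, z) = Y(T(a), z) = \partial_z Y(a,z)$ by translation covariance \eqref{transcov}, so $\partial_z$ is the canonical derivation of $W$, as claimed. The argument as a whole is a mechanical transfer of structure along $Y$; the main subtlety to watch is distinguishing the norm of a state $Y(a, z) \in W$ from the operator norm of its mode $Y(a,z)_t$ acting on $W$, but Lemma \ref{lemfd} bounds the latter by the former, so no genuine obstacle remains once \eqref{assocnew} is in hand.
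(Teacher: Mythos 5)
Your proposal is correct and follows essentially the same route as the paper: transferring the vertex algebra structure from $V$ to $W$ via the associativity/residue-product identity, using Lemma \ref{lemfd} for submultiplicativity, Lemma \ref{lemresprodcreate} and Theorem \ref{thmvac} for creativity and the vacuum, and Propositions \ref{propcFV} and \ref{p:closedmap} for completeness of $W$ and bijectivity of $Y$. The only cosmetic difference is that you identify the canonical derivation via $Y(a,z)_{-2}\id_V = Y(T(a),z) = \partial_z Y(a,z)$ using \eqref{transcov}, whereas the paper cites the Leibniz-type identity \eqref{rptc} directly; both are immediate.
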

\begin{proof}
  The set of fields $Y(a, z)$ is closed with respect to all residue products by Lemma \ref{rpproduct}, and $Y(\bone, z)=\id_V$ by Theorem \ref{thmvac}. Then equation \eqref{rpproduct} says that the state-field correspondence $Y: (V, \bone)\longrightarrow (W, \id_V)$ preserves all products.\ Since $V$ is a $p$-adic vertex algebra and $\cF(V)$ is a $p$-adic Banach space by Proposition \ref{propcFV}, then so too is $W$ because $W$ is closed by Proposition \ref{p:closedmap}.\ Next we show that $\partial _z$ is the canonical derivation for $W$.\ Certainly $\partial_z\id_V=0$,  and it remains to show that $Y(a, z)_{-2}\id_V=\partial_z Y(a, z)$. But this is just the case $t=-1$, $b(z)=\id_V$ of \eqref{rptc}. This shows that $W$ is a $p$-adic vertex algebra.\ That it is submultiplicaitve follows from \eqref{rpproduct} and Lemma \ref{lemfd}. Finally,  we assert that  $Y: V\rightarrow W$ is a continuous bijection.\ Continuity follows by definition and  injectivity is a consequence of the creativity axiom.
\end{proof}

\begin{rmk}
  It is unclear to us whether the map $Y:V\longrightarrow W$ of Corollary \ref{corVW} is always a topological isomorphism. It may be that $W$ functions as a sort of completion of $V$ that rectifies any lack of submultiplicativity in its $p$-adic fields. However, we do not know of an example where $Y$ is not a topological isomorphism onto its image.
\end{rmk}

\section{\texorpdfstring{$p$}{p}-adic conformal vertex algebras}
\label{s:conformal}

Here we develop the theory of vertex algebras containing a dedicated Virasoro vector.\ We find it convenient to divide the main construction into two halves, namely \emph{weakly conformal} and \emph{conformal} $p$-adic vertex algebras.
One of the main results is Proposition \ref{propsubVOA}.

\subsection{The basic definition}
As a preliminary, we recall that the \emph{Virasoro algebra over $\QQ_p$ of central charge $c$} is the Lie algebra with a canonical $\QQ_p$-basis consisting of $L(n)$, for $n\in \ZZ$, and a central element $\kappa$ that satisfies the relations of Definition \ref{altVir}.

\begin{rmk}
  \label{rmkqcc}
  When working over a general base ring it is often convenient to replace the central charge $c$ with the \emph{quasicentral charge} $c'$ defined by $c'\df\tfrac{1}{2}c$, so that the basic Virasoro relation reads
\begin{eqnarray}\label{Virreln2}
[L(m), L(n)]= (m-n)L(m+n) + \delta_{m+n, 0}\binom{m+1}{3}c' \kappa
\end{eqnarray}
where all coefficients, with the possible exception of $c'$, lie in $\ZZ$.
\end{rmk}

\begin{dfn}
  \label{dfnpcva}
  A \emph{weakly conformal $p$-adic vertex algebra} is a quadruple $(V, Y, \bone, \omega)$ such that $(V, Y, \bone)$ is a $p$-adic vertex algebra (cf.\ Definition \ref{d:pva})
and $\omega\in V$ is a distinguished state (called the \emph{conformal vector, or Virasoro vector}) such that
$Y(\omega, z)=:\sum_{n\in\ZZ}L(n)z^{-n-2}$ enjoys the following properties:
\begin{enumerate}
\item[(a)] $[L(m), L(n)]= (m-n)L(m+n) + \tfrac{1}{12}\delta_{m+n, 0}(m^3-m)c_V \id_V$ for some $c=c_V\in \QQ_p$ called the central charge of $V$,
\item[(b)] $[L(-1), Y(v, z)]=\partial_z Y(v, z)$ for $v\in V$.
\end{enumerate}
This completes the Definition.
\end{dfn}

\begin{rmk} We are using the symbols $L(n)$ in two somewhat different ways, namely as elements in an abstract Virasoro Lie algebra in (\ref{altVir}) and as modes $\omega(n+1)$ of the vertex operator $Y(\omega, z)$ in part (a) of Definition \ref{dfnpcva}.\ This practice is traditional and should cause no confusion.\ Thus the meaning of (a) is that the modes of the conformal vector close on a representation of the Virasoro algebra of central charge $c_V$ as operators on $V$ such that the central element $\kappa$
acts as $cId_V$.
\end{rmk}

\begin{rmk} Part (b) of Definition \ref{dfnpcva} is called \emph{translation covariance}.\ Comparison with Subsection \ref{SStranscov} shows that the meaning of part (b) is that $L(-1)$ is none other than the  canonical derivation $T$ of the vertex algebra $(V, Y, \bone)$.
\end{rmk}

\subsection{Elementary properties of weakly conformal $p$-adic vertex algebras}
We draw some basic conclusions from Definition \ref{dfnpcva}.\ These are well-known in algebraic VOA theory, although the context is slightly different here.\ For an integer $k\in\ZZ$ we set
\[
V_{(k)}\df \{v\in V\mid L(0)v=kv\}
\]

\begin{lem}
  \label{lem1omega}
 We have $\bone\in V_{(0)}$ and $\omega\in V_{(2)}$.
\end{lem}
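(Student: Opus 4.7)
The plan is to work purely from the axioms of Definitions \ref{d:pva} and \ref{dfnpcva}, exploiting the creativity axiom together with the Virasoro bracket relations from (a). No $p$-adic subtleties arise: everything reduces to finite algebraic manipulations familiar from the classical VOA setting.

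First I would translate between the two mode conventions. Since $Y(\omega, z) = \sum_{n\in\ZZ} \omega(n) z^{-n-1}$ by the general definition of the state-field correspondence and simultaneously $Y(\omega, z) = \sum_{n\in\ZZ} L(n) z^{-n-2}$ by assumption (a), comparing coefficients gives $\omega(n) = L(n-1)$ for all $n\in\ZZ$. Combining this with the creativity axiom $\omega(n)\bone = 0$ for $n\geq 0$ yields $L(k)\bone = 0$ for all $k\geq -1$. In particular $L(0)\bone = 0$, which by definition of $V_{(0)}$ establishes $\bone \in V_{(0)}$. The other half of creativity, $\omega(-1)\bone = \omega$, rewrites as $L(-2)\bone = \omega$, which will be the crucial identity for the second claim.

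To show $\omega \in V_{(2)}$, I would compute $L(0)\omega$ by substituting $\omega = L(-2)\bone$ and applying the Virasoro bracket from (a):
\[
  L(0)\omega = L(0) L(-2)\bone = [L(0), L(-2)]\bone + L(-2) L(0)\bone.
\]
The second term vanishes by the first part of the lemma. For the first term, the relation in (a) with $m=0$, $n=-2$ gives $[L(0), L(-2)] = 2L(-2)$ (the anomalous term $\tfrac{1}{12}\delta_{m+n,0}(m^3-m)c_V$ is zero because $m+n = -2 \neq 0$, and in any event $m^3-m=0$ when $m=0$). Therefore $L(0)\omega = 2L(-2)\bone = 2\omega$, so $\omega \in V_{(2)}$.

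There is really no obstacle here; the only thing that requires care is the bookkeeping between the two indexing conventions for $Y(\omega, z)$, which is why I would record $\omega(n) = L(n-1)$ explicitly at the outset. Everything else is an immediate application of the creativity axiom and one instance of the Virasoro commutation relation.
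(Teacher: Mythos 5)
Your proof is correct and follows essentially the same route as the paper: creativity applied to $Y(\omega,z)\bone$ gives $L(0)\bone=0$ and $L(-2)\bone=\omega$, and then the single Virasoro bracket $[L(0),L(-2)]=2L(-2)$ yields $L(0)\omega=2\omega$. Your explicit bookkeeping of the shift $\omega(n)=L(n-1)$ is a sensible addition but does not change the argument.
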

\begin{proof}
  First apply the creation axiom to see that
\begin{eqnarray*}
Y(\omega, z)\bone=\sum_{n\in\ZZ}L(n)\bone z^{-n-2}= \omega+ O(z),
\end{eqnarray*}
in particular $L(0)\bone=0$, that is $\bone\in V_{(0)}$. This proves the first containment stated in the Lemma and it also shows that $L(-2)\bone=\omega$.

Now use the Virasoro relations to see that 
\[
L(0)\omega=L(0)L(-2)\bone = [L(0), L(-2)]\bone=2L(-2)\bone=2\omega,
\]
that is $\omega\in V_{(2)}$, which is the other needed containment.\ The Lemma is proved.
\end{proof}

\begin{lem}
  \label{lemgraded}
  Suppose that $u\in V_{(m)}$.\ Then for all integers $\ell$ and $k$ we have
\[
u(\ell): V_{(k)}\rightarrow V_{(k+m-\ell-1)}.
\]
\end{lem}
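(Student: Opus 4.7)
The plan is to compute the commutator $[L(0), u(\ell)]$ using the commutator formula from Proposition \ref{p:commutator}, and then read off the weight-shift. Writing $L(0) = \omega(1)$, I would apply \eqref{commform} with $(r,s) = (1,\ell)$ to the pair $(\omega, u)$:
\begin{equation*}
  [L(0), u(\ell)] = [\omega(1), u(\ell)] = \sum_{i\geq 0} \binom{1}{i}(\omega(i)u)(\ell+1-i).
\end{equation*}
Only the terms $i=0$ and $i=1$ survive (the binomial coefficient vanishes otherwise), so this collapses to
\begin{equation*}
  [L(0), u(\ell)] = (\omega(0)u)(\ell+1) + (\omega(1)u)(\ell) = (L(-1)u)(\ell+1) + (L(0)u)(\ell).
\end{equation*}

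Next I would simplify each term. By Definition \ref{dfnpcva}(b), $L(-1)=T$ is the canonical derivation, so the translation-covariance identity $T(u)(n) = -n \cdot u(n-1)$ established in Section \ref{SStranscov} gives $(L(-1)u)(\ell+1) = -(\ell+1)u(\ell)$. On the other hand, the hypothesis $u \in V_{(m)}$ means $L(0)u = mu$, hence $(L(0)u)(\ell) = m\cdot u(\ell)$. Combining,
\begin{equation*}
  [L(0), u(\ell)] = (m-\ell-1)\,u(\ell).
\end{equation*}

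Finally, for any $w \in V_{(k)}$ we have $L(0)w = kw$, so
\begin{equation*}
  L(0)\bigl(u(\ell)w\bigr) = u(\ell)L(0)w + [L(0),u(\ell)]w = \bigl(k + m - \ell - 1\bigr)\,u(\ell)w,
\end{equation*}
which is exactly the statement that $u(\ell)w \in V_{(k+m-\ell-1)}$. No genuine obstacle is anticipated; the only subtle point is the appeal to translation-covariance to rewrite $(L(-1)u)(\ell+1)$, which is legitimate because Definition \ref{dfnpcva}(b) identifies $L(-1)$ with the canonical derivation $T$ whose mode relation was established in \eqref{transcov}.
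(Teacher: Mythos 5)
Your proof is correct and follows essentially the same route as the paper: both compute $[L(0),u(\ell)]=(L(-1)u)(\ell+1)+(L(0)u)(\ell)$ via the commutator formula and reduce to $(m-\ell-1)u(\ell)$. The only cosmetic difference is that the paper rederives the mode relation $(L(-1)u)(s)=-su(s-1)$ inside the proof from the $r=0$ commutator formula and axiom (b), whereas you cite the already-established identity $T(u)(n)=-nu(n-1)$ together with the identification $L(-1)=T$; both justifications are legitimate.
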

\begin{proof}
  Let $v\in V_{(k)}$. We have to show that $L(0)u(\ell)v=(k+m-\ell-1)u(\ell)v$. With this in mind, first consider the commutator formula Proposition \ref{p:commutator} with $\omega$ in place of $u$ and $r=0$. Bearing in mind that $L(-1)=\omega(0)$ we obtain
\[
[L(-1), v(s+1)]= (L(-1)v)(s),
\]
and a comparison with the translation covariance axiom (b) above then shows that
\begin{align}\label{tc1}
(L(-1)v)(s)&=-sv(s-1).
\end{align}
Now we calculate
\[
L(0)u(\ell)v=[L(0), u(\ell)]v +u(\ell)L(0)v = \sum_{i=0}^1 (L(i-1)u)(1+\ell-i)v +ku(\ell)v
\]
where we made another application of the commutator formula to get the first summand.\ Consequently, by \eqref{tc1} we have
\begin{align*}
L(0)u(\ell)v&= (L(-1)u)(1+\ell)v+(L(0)u)(\ell)v+ku(\ell)v \\
&=-(\ell+1)u(\ell)v+mu(\ell)v+ku(\ell)v,
\end{align*}
which is what we needed.
\end{proof}

\subsection{$p$-adic vertex operator algebras}
\begin{dfn}
  \label{pcva1}
  A \emph{conformal $p$-adic vertex  algebra} is a weakly conformal $p$-adic vertex algebra $(V, Y, \bone, \omega)$ as in Definition \ref{dfnpcva} such that the integral part of the spectrum of $L(0)$ has the following two additional properties:
\begin{enumerate}
\item[(a)] each eigenspace $V_{(k)}$ is a finite-dimensional $\QQ_p$-linear space,
\item[(b)] there is an integer $t$ such that $V_{(k)}=0$ for all $k<t$.
\end{enumerate}
\end{dfn}

We now have
\begin{prop}
  \label{propsubVOA}
  Suppose that $(V,Y, \bone, \omega)$ is a conformal $p$-adic  vertex algebra, and let $U\df \oplus_{k\in \ZZ} V_{(k)}$ be the sum of the integral $L(0)$-eigenspaces. Then $(U, Y, \bone, \omega)$ is an algebraic
VOA over $\QQ_p$.
\end{prop}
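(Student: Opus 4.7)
The strategy is to verify each item in Definition \ref{kvertopalgdef} on $U$ by restricting the structure on $V$. The two things that are genuinely at stake are that $U$ is closed under all the modes $u(n)$, and that the truncation condition from Definition \ref{kvertdef}(1) holds on $U$ (since that is the one axiom not immediately present in the $p$-adic setting, cf.\ Remark \ref{JIrmk} and Subsection \ref{SSprojlim}). Both of these will come directly from the grading Lemma \ref{lemgraded} together with the lower-boundedness assumption (b) of Definition \ref{pcva1}.

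First I would reduce to the case of homogeneous $u \in V_{(m)}$, $v \in V_{(k)}$ by $\QQ_p$-bilinearity. By Lemma \ref{lemgraded} each mode satisfies $u(\ell)v \in V_{(k+m-\ell-1)} \subseteq U$, which shows both that $U$ is $Y$-closed and that each $u(\ell)v$ is an $L(0)$-eigenvector with integral eigenvalue. Using the lower bound $V_{(j)} = 0$ for $j < t$, we get $u(\ell)v = 0$ whenever $\ell > k+m-t-1$, which is the classical truncation condition. This is the one real obstacle; once it is in hand, the infinite sums in the $p$-adic Jacobi identity of Definition \ref{d:pva}(3) become honest finite sums when all inputs are taken from $U$ (indices $i$ for which $u(t+i)v$ or $v(s+i)w$ lie in a vanishing eigenspace contribute zero), so the $p$-adic Jacobi identity on $V$ specializes to the algebraic Jacobi identity \eqref{algJI} on $U$. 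This gives $(U, Y|_U, \bone)$ the structure of an algebraic $\QQ_p$-vertex algebra, noting that $\bone \in V_{(0)} \subseteq U$ by Lemma \ref{lem1omega}.

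Next I would check the VOA-specific axioms. By Lemma \ref{lem1omega}, $\omega \in V_{(2)} \subseteq U$, so the Virasoro modes $L(n)$ are among the modes of elements of $U$ and their commutator relations in Definition \ref{dfnpcva}(a) are precisely the Virasoro relations \eqref{altVir} with central charge $c_V$ (and $\kappa$ acting as $c_V \id_V$ restricts to $c_V \id_U$). Translation covariance $[L(-1), Y(v, z)] = \partial_z Y(v, z)$ for $v \in U$ is inherited directly from Definition \ref{dfnpcva}(b). Finally, the grading axiom of Definition \ref{kvertopalgdef} is built into the construction: $U = \oplus_{n \in \ZZ} V_{(n)}$ by definition, each $V_{(n)}$ is finite-dimensional over $\QQ_p$ by Definition \ref{pcva1}(a), and $V_{(n)} = 0$ for $n < t$ by Definition \ref{pcva1}(b). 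Assembling these observations gives the proposition.
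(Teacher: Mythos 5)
Your proof is correct and follows essentially the same route as the paper's: Lemma \ref{lem1omega} places $\bone$ and $\omega$ in $U$, Lemma \ref{lemgraded} together with the lower bound on the grading gives closure under modes and the truncation condition, and the $p$-adic Jacobi identity then specializes to the algebraic one. Your write-up is in fact slightly more careful on the truncation bound ($\ell > k+m-t-1$, correctly accounting for the cutoff $t$) and more explicit about why the infinite Jacobi sums become finite.
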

\begin{proof}
  The results of the previous Subsection apply here, so in particular $U$ contains $\bone$ and $\omega$ by Lemma \ref{lem1omega}. Moreover, if $u\in U$, then $Y(u, z)$ belongs to $\pseries{\Endo(U)}{z, z^{-1}}$ thanks to Lemma \ref{lemgraded}.\ More is true: because $V_{(k)}=0$ for $k<t$, the same Lemma implies that $u(\ell)v=0$ whenever $u\in V_{(m)}$, $v\in V_{(k)}$ and $\ell >k+m-1$. Therefore, each $Y(u, z)$ is an algebraic field on $U$.

Because $(V, Y, \bone)$ is a $p$-adic subVA of $V$, then the vertex operators $Y(u, z)$ for $u\in U$ satisfy the algebraic Jacobi identity, and therefore
$(U, Y, \bone, \omega)$ is an algebraic VOA over $\QQ_p$.
\end{proof}

\begin{dfn}
  \label{dfnpvoa1}
  A \emph{$p$-adic VOA} is a conformal $p$-adic vertex algebra $(V, Y, \bone, \omega)$ as in Definition \ref{pcva1} such that $V=\widehat{U}$.
\end{dfn}

As an immediate consequence of the preceding Definition and Proposition \ref{propsubVOA}, we have
\begin{cor}
  Suppose that $(V,Y, \bone, \omega)$ is a $p$-adic conformal vertex algebra, and let $U\df \oplus_{k\in \ZZ} V_{(k)}$ be the sum of the $L(0)$-eigenspaces. Then the completion $\widehat{U}$ of $U$ in $V$ is a $p$-adic VOA. 
$\hfill\Box$
\end{cor}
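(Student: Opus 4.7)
The plan is to verify that $\widehat{U}$ satisfies the conditions of Definition \ref{dfnpvoa1}: namely, that $\widehat{U}$ is a conformal $p$-adic vertex algebra, and that the completion in $\widehat{U}$ of its own sum of integer $L(0)$-eigenspaces is all of $\widehat{U}$.

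First I would note that $\widehat{U}$ is a closed subspace of the $p$-adic Banach space $V$, hence itself a $p$-adic Banach space. By Lemma \ref{lem1omega}, $\bone \in V_{(0)} \subseteq U$ and $\omega \in V_{(2)} \subseteq U$, so both distinguished vectors belong to $\widehat{U}$. The vacuum normalization, creativity, Jacobi identity, Virasoro relations on the modes of $\omega$, and translation covariance $[L(-1), Y(v,z)] = \partial_z Y(v,z)$ are universal identities that already hold in the ambient $V$; they therefore restrict to $\widehat{U}$ once we know that the modes preserve $\widehat{U}$.

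The crux is to show closure of $\widehat{U}$ under all modes. Fix $a,b \in \widehat{U}$ and $n \in \ZZ$, and choose sequences $a_j, b_j \in U$ converging to $a$ and $b$ respectively. By Proposition \ref{propsubVOA}, $U$ is an algebraic VOA over $\QQ_p$, so $a_j(n)b_j \in U$ for every $j$. Continuity of $Y\colon V \to \cF(V)$ gives sup-norm convergence $Y(a_j,z) \to Y(a,z)$ in $\cF(V)$, and therefore operator-norm convergence $a_j(n) \to a(n)$ uniformly in $n$. Writing
\[
a_j(n)b_j - a(n)b = \bigl(a_j(n) - a(n)\bigr)b_j + a(n)\bigl(b_j - b\bigr)
\]
and using that $\abs{b_j}$ is bounded (as a convergent sequence) while $a(n)$ is a continuous endomorphism of $V$, both summands tend to $0$; thus $a_j(n)b_j \to a(n)b$. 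Since $\widehat{U}$ is closed in $V$, we conclude $a(n)b \in \widehat{U}$. The uniform boundedness and vanishing-at-infinity properties of the modes transfer from the ambient $p$-adic field on $V$, so $Y$ restricts to a continuous map $\widehat{U} \to \cF(\widehat{U})$ and the algebraic axioms persist.

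It remains to verify the eigenspace conditions. Any $L(0)$-eigenvector in $\widehat{U}$ is by restriction an $L(0)$-eigenvector in $V$ with the same eigenvalue, so $(\widehat{U})_{(k)} \subseteq V_{(k)}$ for every $k \in \ZZ$; the finite-dimensionality and lower boundedness required by Definition \ref{pcva1} are therefore inherited from $V$. Conversely $V_{(k)} \subseteq U \subseteq \widehat{U}$ gives the opposite inclusion, so in fact $(\widehat{U})_{(k)} = V_{(k)}$ and the sum $U' \df \bigoplus_{k\in\ZZ}(\widehat{U})_{(k)}$ coincides with $U$. Taking its closure in $\widehat{U}$ recovers $\widehat{U}$ by construction, verifying Definition \ref{dfnpvoa1}. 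The main obstacle is the modal closure argument in the previous paragraph, which depends essentially on the continuity of the state-field correspondence together with the joint continuity of bilinear operations in $p$-adic Banach spaces.
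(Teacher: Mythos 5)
Your proof is correct and follows the route the paper intends: the paper states this corollary as an immediate consequence of Definition \ref{dfnpvoa1} and Proposition \ref{propsubVOA}, and your argument simply fills in the routine verifications (modal closure of $\widehat{U}$ via continuity of $Y$, identification of the eigenspaces $(\widehat{U})_{(k)} = V_{(k)}$, and inheritance of the axioms). No discrepancies with the paper's approach.
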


\subsection{Examples} We discuss some elementary examples of $p$-adic VOAs of a type that are familiar in the context of algebraic $k$-vertex algebras.\
The first two are degenerate in the informal sense that the conformal vector $\omega$ is equal to $0$

\begin{ex}
  \emph{$p$-adic topological rings.}\\
Suppose that $V$ is a commutative, associative ring with identity $1$ that is also a  $p$-adic Banach space.\ Concerning the continuity of multiplication, we only need to assume that each multiplication $a: b\mapsto ab$, for $a, b\in V$, is bounded, i.e., $\abs{ab}\leq M\abs{b}$ for a nonnegative constant $M$ depending on $a$ but independent of $b$. Define the vertex operator $Y(a, z)$ to be multiplication by $a$. We claim that $(V, Y, 1)$ is a $p$-adic vertex algebra. Indeed, $\abs{Y(a, z)}=\abs{a}$, so $Y$ is bounded and hence continuous.
The commutativity of multiplication amounts to $p$-adic locality, cf.
Definition \ref{d:locality}, and the remaining assumptions of Theorem \ref{thmconverse} are readily verified (taking $T=0$). Then this result says that indeed
$(V, Y, 1)$ is a $p$-adic vertex ring. Actually, it is a $p$-adic VOA (with $\omega=0$) as long as $V=V_{(0)}$ is finite-dimensional over $\QQ_p$.
\end{ex}

\begin{ex}
\emph{Commutative $p$-adic Banach algebras.}\\
 This is essentially a special case of the preceding Example 1.\ In a $p$-adic Banach ring $V$ with identity, one has (by definition)
that multiplication is  submultiplicative, i.e., $\abs{ab} \leq \abs{a}\abs{b}$.\ Thus in the previous Example, the vertex operator $Y(a, z)$ is submultiplicative (compare with Definition \ref{dfnpadicfd}).
\end{ex}

\begin{ex}
\emph{The $p$-adic Virasoro VOA.}\\
The construction of the Virasoro vertex algebra over $\CC$ can be basically reproduced working over $\ZZ_p$ \cite{Mason1}. We recall some details here.\

We always assume that the quasicentral charge $c'$ as defined in \eqref{Virreln2} lies in $\ZZ_p$. Form the highest weight module, call it $W$, for the Virasoro algebra over $\QQ_p$ that is generated by a state $v_0$ annihilated by all modes $L(n)$, for $n\geq 0$, and on which the central element $K$ acts as the identity. 

By definition (and the Poincar\'{e}-Birkhoff-Witt theorem) then, $W$ has a basis consisting of the states $L(-n_1)\cdots L(-n_r)v_0$ for all sequences  $n_1\geq n_2\geq \ldots \geq n_r\geq 1$.\ Thanks to our assumption that $c'\in \ZZ_p$ it is evident from \eqref{Virreln2} that the very same basis affords a module for the Virasoro Lie ring over $\ZZ_p$.\ We also let $W$ denote this $\ZZ_p$-lattice.\ Let $W_1\subseteq W$ be the $\ZZ_p$-submodule generated by $L(-1)v_0$.\ Then the quotient module $W/W_1$ has a canonical $\ZZ_p$-basis consisting of states 
$L(-n_1)\cdots L(-n_r)v_0+W_1$ for all sequences  $n_1\geq n_2\geq \ldots\geq n_r\geq 2$.\

The Virasoro vertex algebra over $\ZZ_p$ with quasicentral charge $c'$ has (by Definition) the Fock space $V\df W/W_1$ and the  vacuum element is
$\bone\df v_0+W_1$. By construction $V$ is a module for the Virasoro ring over $\ZZ_p$, and the vertex operators $Y(v, z)$ for, say $v=L(-n_1)\cdots L(-n_r)v_0+W_1$, may be defined in terms of the actions of the $L(n)$ on $V$.\ For further details see \cite[Section 7]{Mason1}, where it is also proved (Theorem 7.3, loc. cit) that when so defined, $(V, Y, \bone)$ is an algebraic vertex algebra over $\ZZ_p$.\ Indeed, it is shown (loc.\ cit, Subsection 7.5) that $(V, Y, \bone, \omega)$ is a VOA over $\ZZ_p$ in the sense defined there.\ But this will concern us less because we want to treat our algebraic vertex algebra $(W/W_1, Y, \bone)$ over $\ZZ_p$  $p$-adically.

Having gotten this far, the next step is almost self-evident. We introduce the completion
\[
V'\df \varprojlim V/p^kV
\]

\begin{thm}
  \label{t:virasoro}
  The completion $V'$ is a $p$-adic VOA in the sense of Definition \ref{dfnpvoa1}.
\end{thm}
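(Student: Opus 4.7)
The plan is to verify the axioms of Definition \ref{dfnpvoa1} by leveraging the algebraic $\ZZ_p$-structure of $V = W/W_1$ and extending everything by $p$-adic continuity.

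First, I would identify the Banach space structure. Since $V$ carries the canonical $\ZZ_p$-basis consisting of $\bone$ and the monomials $L(-n_1)\cdots L(-n_r)v_0 + W_1$ with $n_1\geq \cdots \geq n_r \geq 2$, the inverse limit $\varprojlim V/p^kV$ identifies with the $\ZZ_p$-module of formal series in this basis. Tensoring with $\QQ_p$ produces a $p$-adic Banach space (still called $V'$) whose unit ball is this completion and whose orthonormal basis in the sense of Definition \ref{d:orthonormal} is the canonical basis. Next, I would extend the state-field correspondence. Because $V$ is an algebraic $\ZZ_p$-vertex algebra, each mode $a(n)$ with $a\in V$ is a $\ZZ_p$-endomorphism of $V$ and so extends uniquely to a continuous endomorphism of $V'$ of operator norm $\leq 1$. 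For general $a\in V'$ approximated by $a_i\in V$, set $a(n)\df \lim_i a_i(n)$; the limit exists in $\Endo(V')$ uniformly in $n$, and a diagonal argument using the algebraic truncation in $V$ gives $\lim_{n\to\infty} a(n)b=0$ for every $b\in V'$. The resulting map $Y\colon V' \to \cF_s(V')$ is thus a submultiplicative $p$-adic field assignment with $\abs{Y(a,z)}\leq \abs{a}$, continuous by Lemma \ref{l:Ycont}.

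Next I would verify the $p$-adic vertex algebra axioms of Definition \ref{d:pva}. Vacuum normalization and creativity descend immediately from $V$. For the Jacobi identity, note that for triples $(u,v,w)\in V\times V\times V$ all sums are \emph{finite} by algebraic truncation in $V$ and the identity holds by the algebraic vertex algebra structure of $V$ proved in \cite[Section 7]{Mason1}. Both sides of the $p$-adic Jacobi identity are continuous functions of $(u,v,w)\in V'\times V'\times V'$ into $V'$: convergence of each full series is handled by Lemma \ref{lemJI}, and submultiplicativity of the modes provides the uniform bounds needed to pass to $p$-adic limits term by term. Since $V$ is dense in $V'$, the Jacobi identity therefore holds on all of $V'$.

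Then I would verify the conformal structure. The Virasoro relations in part (a) of Definition \ref{dfnpcva} and translation covariance in part (b) hold as operator identities on $V$ over $\ZZ_p$ by construction; since all $L(n)$ extend to continuous operators on $V'$ and the relations are operator identities, they transport to $V'$ yielding central charge $c_V = 2c'$. The weight-$k$ eigenspace $V_{(k)}$ has $\ZZ_p$-basis the monomials $L(-n_1)\cdots L(-n_r)v_0+W_1$ with $\sum n_i = k$, of which there are $p_{\geq 2}(k)$ many (partitions of $k$ into parts $\geq 2$), so $V_{(k)}$ is finite-dimensional over $\QQ_p$ and vanishes for $k<0$ and for $k=1$. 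This verifies Definition \ref{pcva1}. Finally, since the canonical basis consists entirely of $L(0)$-eigenvectors, the algebraic sum $U \df \bigoplus_k V'_{(k)}$ contains the orthonormal basis, and so $V' = \widehat{U}$ in the sense of Definition \ref{dfnpvoa1}.

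The most delicate step will be the second: propagating the Jacobi identity from $V$ to $V'$. The issue is that for general $u,v,w\in V'$ the sums in the Jacobi identity are genuinely infinite, so one cannot simply argue by density. One must establish uniform bounds on the $n$-th mode terms $(u_i(t+n)v_i)(r+s-n)w_i$ as $(u_i,v_i,w_i)\to (u,v,w)$, and control both the tail of the infinite sum and the approximation error simultaneously. Submultiplicativity of $Y$ (which holds by integrality) is what makes this tractable, as it allows all error estimates to be controlled purely by the norms $\abs{u-u_i}$, $\abs{v-v_i}$, $\abs{w-w_i}$ without constants that could blow up with $n$.
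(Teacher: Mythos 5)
Your proposal is correct and follows essentially the same route as the paper: complete the $\ZZ_p$-integral form with respect to its sup-norm, extend the modes and the Virasoro structure by continuity, and pass the (finitely truncating) algebraic Jacobi identity on the dense subspace $V$ to the completion using the uniform submultiplicative bounds coming from integrality. The paper's own proof of this theorem is terser because it outsources exactly these continuity arguments to Propositions \ref{propint} and \ref{p:integralcompletion} of the general completion section (which the paper notes ``generalizes'' this theorem); you have simply written out that deferred argument in place, including the correct identification of the delicate step --- propagating the Jacobi identity --- and the correct mechanism (uniform-in-$n$ trilinear bounds from submultiplicativity) for handling it.
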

\begin{proof}
  It goes without saying that $V' = (V', Y, \bone, \omega)$ where $V',Y$ and $\bone$ have already been explained.\ We define $\omega\df L(-2)v_0+W_1=L(-2)\bone\in V$.\ Implicit in \cite[Theorem 7.3]{Mason1} are the statements that $\omega$ is a Virasoro state in $V$, and that its mode $L(0)$ has the
properties (a) and (b) of Definition \ref{dfnpcva}. Thus $(V', Y, \bone, \omega)$ is a weakly conformal $p$-adic VOA.

Now because $V$ is an algebraic VOA then $V=\oplus V_{(k)}$ has a decomposition into $L(0)$-eigenspaces as indicated, with $V_{(k)}$ a finitely generated free $\ZZ_p$-module and $V_{(k)}=0$ for all small enough $k$. By Proposition \ref{propint}  $V$ is also the sum of the integral $L(0)$-eigenspaces in $V'$. So in fact $(V', Y, \bone, \omega)$ is a conformal $p$-adic vertex algebra. Since by definition $V'$ is the completion of $V$ then $(V', Y, \bone, \omega)$ is a $p$-adic VOA according to Definition \ref{dfnpvoa1}.\ The Theorem is proved.
\end{proof}
\end{ex}

\section{Completion of an algebraic vertex operator algebra}
\label{s:completion}
We now discuss the situation of completing algebraic VOAs. The structure that arises was the model for our definition of $p$-adic VOA above. This section generalizes Theorem \ref{t:virasoro} above.

Let $V=(V, Y, \bone)$ be an algebraic VOA over $\Qp$. Assume further that $V$ is equipped with a nonarchimedean absolute value $\abs{\cdot}$ that is compatible with the absolute value on $\Qp$ in the sense that if $\alpha \in \Qp$ and $v \in V$ then $\abs{\alpha v} = \abs{\alpha}\abs{v}$, and such that $\abs{\bone}\leq 1$. We do not necessarily assume that $V$ is complete with respect to this absolute value.\ In order for the completion of $V$ to inherit the structure of a $p$-adic VOA, it will be necessary to assume some compatibility between the VOA axioms and the topology on $V$.\ To this end we make the following definition.
\begin{dfn}\label{compatabsval} We say that $\abs{\cdot}$ is \emph{compatible} with $V$ if the following properties hold:
\begin{enumerate}
\item for each $a \in V$, the algebraic vertex operator $Y(a,z)$ belongs to $\cF(V)$, i.e., it is a $p$-adic field in the sense of Definition \ref{d:pfield}. In particular, each mode $a(n)$ is a bounded and hence continuous endomorphism of $V$.
\item the association $a \mapsto Y(a,z)$ is continuous when $\cF(V)$ is given the topology derived from the sup-norm.
\end{enumerate}
\end{dfn}
These conditions are well-defined even though $V$ is not assumed to be complete.\ The basic observation is the following:
\begin{prop}
  \label{p:compatabsval}
Assume that $\abs{\cdot}$ is compatible with $V=(V, Y, \bone)$ in the sense of Definition \ref{compatabsval}, and let $V'$ denote the completion of $V$ with respect to $\abs{\cdot}$. Then $V'$ has a natural structure of $p$-adic VOA.
\end{prop}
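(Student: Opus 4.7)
The plan is to transfer all structure from $V$ to its completion $V'$ by continuity, exploiting density of $V$ in $V'$ and completeness of $\cF(V')$ from Proposition \ref{propcFV}.

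First I would extend the modes and the state-field correspondence. For $a \in V$ and $n \in \ZZ$, the mode $a(n)\colon V\to V$ is continuous by hypothesis (it is bounded since $Y(a,z)\in\cF(V)$), so extends uniquely to a continuous endomorphism of $V'$ with the same operator norm. I would check that $Y(a,z)$ remains a $p$-adic field on $V'$: condition (1) of Definition \ref{d:pfield} is inherited because the uniform bound $|a(n)b|\leq M|a||b|$ persists under completion, while condition (2) follows by an $\veps$-approximation argument --- given $b'\in V'$ pick $b \in V$ close to $b'$, so that $|a(n)b' - a(n)b|\leq M|a||b'-b|$ is uniformly small and $a(n)b \to 0$ then forces $a(n)b' \to 0$. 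Since operator norms are preserved under extension, the hypothesized continuity of $a\mapsto Y(a,z)$ from $V$ to $\cF(V)$ remains valid as a map $V\to\cF(V')$, and by completeness of $\cF(V')$ it extends uniquely to a continuous linear $Y'\colon V'\to\cF(V')$.

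Next I would verify the $p$-adic vertex algebra axioms for $(V',Y',\bone)$. Vacuum normalization is inherited from $V$. Creativity extends by continuity of the evaluation maps $a\mapsto a(n)\bone$, combined with the algebraic creativity on $V$. The Jacobi identity is the main obstacle. For $u,v,w\in V$, the algebraic truncation condition forces both sides of the $p$-adic Jacobi identity (which converge in $V'$ by Lemma \ref{lemJI}) to collapse onto the finite algebraic sums, which are equal. It therefore suffices to show each side is jointly continuous on $(V')^3$, for density of $V^3$ in $(V')^3$ then transfers the identity. Continuity is achieved term-by-term: trilinearity and the uniform bound
\[
|(u(t+i)v)(r+s-i)w| \leq C^{2}|u||v||w|,
\]
holding uniformly in $i$ (where $C$ is a continuity modulus of $Y$, via the composition $u\mapsto Y(u,z) \mapsto u(t+i)v \mapsto Y(u(t+i)v,z)$), together with the strong triangle inequality, yield estimates such as $|L(u'-u,v,w)|\leq C^{2}|u'-u||v||w|$ for the left-hand side $L$, and analogously for the other arguments and for the right-hand side.

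Finally I would verify the conformal structure. The Virasoro vector $\omega\in V\subseteq V'$ and its modes $L(n)=\omega(n+1)$ extend by continuity, and the Virasoro relations and translation-covariance $[L(-1),Y(v,z)]=\partial_z Y(v,z)$ are identities among continuous operators that hold on the dense subspace $V$ and hence on $V'$. Each $V_{(k)}$ is finite-dimensional and so closed in $V'$, and vanishes for $k\ll 0$. Under the grading compatibility of the norm (natural in the integral-form examples motivating the proposition), every integer $L(0)$-eigenvector in $V'$ lies in some $V_{(k)}$, giving $\bigoplus_k V'_{(k)} = \bigoplus_k V_{(k)} = U$, so that $V'=\widehat{U}$ as required by Definition \ref{dfnpvoa1}. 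The crux of the whole argument is the Jacobi step: the algebraic identity concerns finite sums, but in $V'$ it becomes a statement about $p$-adically convergent infinite series, and closing the gap rests precisely on the uniform mode estimates furnished by the definition of $p$-adic field and the continuity of $Y$.
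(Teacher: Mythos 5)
Your proposal is correct and follows essentially the same route as the paper: extend the modes and the state--field correspondence to $V'$ by continuity using completeness of $\cF(V')$, observe that creativity and the Jacobi identity pass to limits, and inherit the conformal structure. You simply make explicit what the paper leaves terse --- in particular the reduction of the $p$-adic Jacobi identity on $V^3$ to the finite algebraic sums and the uniform trilinear estimates that transfer it to $(V')^3$, and the identification of the integral $L(0)$-eigenspaces, which the paper delegates to Proposition \ref{propint}.
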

\begin{proof}
  Let $a\in V'$ so that by definition we can write $a=\lim_{n\to\infty} a_n$ for $a_n\in V$. We first explain how to define $Y(a,z)$ by taking a limit of the algebraic fields $Y(a_n,z)$, and then we show that this limit is a $p$-adic field on $V'$.
  
 The modes of each $a_n$ are continuous, and so they extend to continuous endomorphisms of $V'$, and indeed, since $Y(a_n,z) \in \cF(V)$ by condition (1) of Definition \ref{compatabsval}, we can naturally view this as a $p$-adic field in $\cF(V')$ by continuity. Next, by continuity of the map $b \mapsto Y(b,z)$ on $V$, and since $\cF(V')$ is complete by Proposition \ref{propcFV}, we deduce that $Y(a,z) = \lim_{n\to \infty}Y(a_n,z)$ is also contained in $\cF(V')$. The extended map $Y \colon V' \to \cF(V')$ inherits continuity similarly. 

  Next, it is clear that the creativity axiom is preserved by taking limits. Likewise, the Jacobi identity is preserved under taking limits, as it is the same equality as in the algebraic case. Finally, the Virasoro structure of $V'$ is inherited from $V$, and similarly for the (continuous) direct sum decomposition.
\end{proof}

The following Proposition is straightforward, but we state it explicitly to make it clear that the continuously-graded pieces of the $p$-adic completion of an algebraic VOA do not grow in dimension:
\begin{prop}\label{propint}
  Let $V = \bigoplus_{n\geq N} V_{(n)}$ denote an algebraic VOA endowed with a compatible nonarchimedean absolute value, and let $V' = \widehat{\bigoplus}_{n\geq N} V_{(n)}'$ denote the corresponding completion ($p$-adic VOA). Then $V_{(n)} = V_{(n)}'$ and
  \[
    V'_{(n)} = \{v \in V' \mid L(0)v = nv\}.
  \]
\end{prop}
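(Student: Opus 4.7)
The proposition splits into two claims: first, that the completion of each graded piece adds nothing new ($V_{(n)} = V_{(n)}'$), and second, that the $L(0)$-eigenspace decomposition of $V'$ recovers these same pieces.

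\textbf{Step 1} ($V_{(n)} = V_{(n)}'$). Because $(V,Y,\bone,\omega)$ is an algebraic VOA over $\Qp$, part (2) of Definition \ref{kvertopalgdef} guarantees that $V_{(n)}$ is a finitely generated $\Qp$-module, hence a finite-dimensional $\Qp$-vector space. Every finite-dimensional normed space over the complete field $\Qp$ is itself complete, so $V_{(n)}$ is already closed with respect to the compatible absolute value, and its closure inside $V'$, which is what $V_{(n)}'$ denotes in the expression $V' = \widehat{\bigoplus}_{n\geq N} V_{(n)}'$, equals $V_{(n)}$.

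\textbf{Step 2} (Eigenspace characterization). The inclusion $V_{(n)} \subseteq \{v \in V' \mid L(0)v = nv\}$ is immediate, since $L(0)$ on $V'$ restricts to $L(0)$ on $V$ and elements of $V_{(n)}$ are by definition $n$-eigenvectors. For the reverse inclusion, let $v \in V'$ satisfy $L(0)v = nv$. Using the completed direct sum structure from Step 1, write $v = \sum_m v_m$ uniquely, with $v_m \in V_{(m)}$ and $\abs{v_m} \to 0$. The operator $L(0) = \omega(1)$ is a continuous endomorphism of $V'$ because $Y(\omega, z) \in \cF(V')$ by Proposition \ref{p:compatabsval}, so each of its modes is bounded by Definition \ref{d:pfield}(1). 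Continuity permits interchanging $L(0)$ with the convergent series:
\[
L(0)v = \sum_m L(0) v_m = \sum_m m\, v_m.
\]
Comparing with $nv = \sum_m n\, v_m$ and invoking uniqueness of the graded expansion yields $(m-n)v_m = 0$ for every $m$, hence $v_m = 0$ whenever $m \neq n$. Thus $v = v_n \in V_{(n)}$.

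\textbf{Main obstacle.} The only point of any substance is justifying the interchange of $L(0)$ with the infinite sum, which reduces to continuity of $L(0)$ on $V'$. This is built into our framework: $L(0)$ is a mode of the $p$-adic field $Y(\omega,z)$ and is therefore automatically bounded. Combined with the uniqueness of the completed direct sum expansion (which is intrinsic to the definition of $\widehat{\bigoplus}$), the argument is then essentially formal.
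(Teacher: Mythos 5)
Your proof is correct and follows essentially the same route as the paper's: finite-dimensionality of each $V_{(n)}$ over the complete field $\Qp$ gives $V_{(n)}'=V_{(n)}$, and then continuity of $L(0)$ (guaranteed because all modes of a compatible absolute value are bounded) lets you apply $L(0)-n$ term by term to the graded expansion $v=\sum_m v_m$ and invoke uniqueness to kill all components with $m\neq n$. The only cosmetic difference is that you route the continuity of $L(0)$ through Proposition \ref{p:compatabsval}, while the paper appeals directly to the compatibility hypothesis; both are valid.
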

\begin{proof}
  Since each $V_{(n)}$ is a finite-dimensional $\Qp$-vector space by hypothesis, it is automatically complete with respect to any nonarchimedean norm. Therefore, the completion $V'$ consists of sums $v = \sum_{n\geq N} v_n$ with $v_n \in V_{(n)}$ for all $n$, such that $v_n \to 0$ as $n \to \infty$. Moreover $v = 0$ if and only if $v_n = 0$ for all $n$. Since $L(0)$ is assumed to be continuous with respect to the topology on $V$ (since all modes are assumed continuous) we deduce that
  \[
  (L(0)-n)v = \sum_{m\geq N} (L(0)-n)v_m = \sum_{m\geq N}(m-n)v_m.
\]
The preceding expression only vanishes if $v \in V_n$. Thus $L(0)$ does not acquire any new eigenvectors with integer eigenvalues in $V'$, which concludes the proof.
\end{proof}

Now we wish to discuss one way that $p$-adic VOAs arise. It remains an open question to give an example of a $p$-adic VOA that does not arise via completion of an algebraic VOA with respect to some absolute value.

Let $U = \bigoplus_k U_{(k)}$ denote an algebraic vertex algebra over $\Zp$ in the sense of \cite{Mason1} and Section \ref{s:primer}, but equipped with an integral decomposition as shown, where each graded piece $U_{(k)}$ is assumed to be a free $\Zp$-module of finite rank.\ Then  $V \df U \otimes_{\ZZ_p}\QQ_p$ has the structure of an algebraic vertex algebra, and suppose moreover that $V$ has a structure of algebraic VOA compatible with this vertex algebra structure.\ As usual, $V=\oplus_k
V_{(k)}$ is the decomposition of $V$ into $L(0)$-eigenspaces. Note that the conformal vector $\omega \in V$ may not be integral, that is, it might not be contained in $U$.\
Suppose, however, that the gradings are compatible, so that $U_{(k)} = U \cap V_{(k)}$. Choose $\Zp$-bases for each submodule $U_{(k)}$, and endow $V$ with the corresponding sup-norm: that is, if $(e_j)$ are basis vectors, then $\abs{\sum \alpha_je_j} = \sup_j \abs{\alpha_j}$. 

\begin{prop}
  \label{p:integralcompletion}
  Let $V = U\otimes_{\Zp}\Qp$, equipped with the induced sup-norm as above. Then the sup-norm is compatible with the algebraic VOA structure.\ Thus, the completion $V'$ is a $p$-adic VOA.
\end{prop}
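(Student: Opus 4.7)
The plan is to verify that the sup-norm is compatible with $V$ in the sense of Definition \ref{compatabsval}; once this is done, the assertion that $V'$ is a $p$-adic VOA follows immediately from Proposition \ref{p:compatabsval}. The entire argument turns on a single estimate that translates integrality of $U$ into uniform boundedness of modes: for all $a, b \in V$ and all $n \in \ZZ$,
\[
\abs{a(n)b} \leq \abs{a}\abs{b}.
\]

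First I would establish this bound. If $a, b \in U$, then condition (i) of Definition \ref{d:lattice} guarantees $a(n)b \in U$; expanding $a(n)b$ in the chosen $\ZZ_p$-basis of $U$, each coefficient lies in $\ZZ_p$ and hence has absolute value at most $1$, so $\abs{a(n)b} \leq 1$ whenever $\abs{a}, \abs{b} \leq 1$. For arbitrary $a, b \in V = U \otimes_{\ZZ_p}\QQ_p$, expand in the basis as $a = \sum_j \alpha_j e_j$ and $b = \sum_k \beta_k f_k$ (finite sums); $\QQ_p$-bilinearity of the $n$-th product together with the strong triangle inequality yields
\[
\abs{a(n)b} \leq \max_{j,k} \abs{\alpha_j}\abs{\beta_k}\abs{e_j(n)f_k} \leq \abs{a}\abs{b},
\]
since each $e_j(n)f_k$ is an element of $U$ with sup-norm at most $1$.

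With the estimate in hand, condition (1) of Definition \ref{d:pfield} holds with $M = \abs{a}$, and in fact $Y(a, z)$ is submultiplicative in the sense of Definition \ref{d:pfieldsn}. Condition (2) of Definition \ref{d:pfield} comes for free from the algebraic truncation axiom in $U$: for $a, b \in U$ we have $a(n)b = 0$ for $n$ large, and this extends to all $a, b \in V$ by $\QQ_p$-bilinearity and the finiteness of the expansions of $a$ and $b$ in the basis. Continuity of $Y\colon V \to \cF(V)$ follows at once from the bound $\abs{Y(a, z)} = \sup_n \abs{a(n)} \leq \abs{a}$, exactly as in the proof of Lemma \ref{l:Ycont}. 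Compatibility is thereby established, and Proposition \ref{p:compatabsval} delivers the $p$-adic VOA structure on $V'$.

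I do not anticipate a genuine obstacle; the role of an integral form is precisely to trivialize the boundedness hypothesis in Definition \ref{compatabsval}. The one minor point worth flagging is that the conformal vector $\omega$ may fail to be integral, so that $\abs{\omega}$ can exceed $1$. This is harmless: the submultiplicative estimate above still applies to $\omega$, and the transfer of the Virasoro relations and the $L(0)$-grading to the completion $V'$ is already handled by Proposition \ref{p:compatabsval} in conjunction with Proposition \ref{propint}.
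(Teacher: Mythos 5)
Your proof is correct and follows essentially the same route as the paper: integrality of $U$ gives the uniform bound on modes, the algebraic truncation axiom gives the finite Laurent tails, the resulting inequality $\abs{Y(a,z)}\leq\abs{a}$ gives continuity, and Proposition \ref{p:compatabsval} finishes. The only differences are cosmetic --- the paper bounds the operator norms $\abs{e_j(n)}\leq 1$ of basis modes rather than stating the bilinear estimate $\abs{a(n)b}\leq\abs{a}\abs{b}$, and it additionally records the reverse inequality $\abs{a}\leq\abs{Y(a,z)}$ to conclude $Y$ is isometric, which your argument rightly observes is not needed for compatibility (note only that your constant in Definition \ref{d:pfield}(1) should be $M=1$, not $M=\abs{a}$).
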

\begin{proof}
  Each $a \in V$ is a finite $\Qp$-linear combination of elements in the homogeneous pieces $U_{(k)}$ of the integral model. The modes of elements in $U_{(k)}$ are direct sums of maps between finite-rank free $\Zp$-modules, and so they are thus uniformly bounded by $1$ in the operator norm. It follows that the modes appearing in any $Y(a,z)$ are uniformly bounded. Further, for each $b \in V$, the series $Y(a,z)b$ has a finite Laurent tail since $V$ is an algebraic VOA. Therefore each $Y(a,z)$ is a $p$-adic field, as required by condition (1) of Definition \ref{compatabsval}.

  It remains to show that the state-field correspondence $V \to \cF(V)$ is continuous. We would like to use the conclusion of Proposition \ref{p:closedmap} below, but that result assumed that we had a $p$-adic VOA to begin with. To avoid circularity, let us explain why the conclusion nevertheless applies here: first, since $\bone$ is from the underlying algebraic VOA, we still have $Y(\bone,z) = \id_{V}$. This is all that is required to obtain $\abs{\bone}\leq \abs{Y(\bone,z)}$, as in the proof of Corollary \ref{p:closedmap}. Then using this, and since $a(-1)\bone=a$ for $a \in V$ since $V$ is an algebraic VOA, we deduce that
  \begin{equation}
    \label{eq:cont}
    \abs{a} \leq \abs{Y(a,z)}
  \end{equation}
  for all $a \in V$, as in the proof of Proposition \ref{p:closedmap}.

  Now, returning to the continuity of the map $V \to \cF(V)$, write $a \in V$ as $a = \sum_j \alpha_je_j$ where $\alpha_j \in \Qp$ and $e_j\in U_{(k_j)}$ are basis vectors used to define the sup-norm on $V$. In particular, we have $\abs{e_j(n)}\leq 1$ for all $j$ and $n$, since $e_j(n)$ is defined over $\ZZ_p$ by hypothesis. Then
  \[
    \abs{Y(a,z)} = \sup_n\abs{a(n)} \leq \sup_n\sup_j\abs{\alpha_j}\abs{e_j(n)} \leq \sup_j\abs{\alpha_j} = \abs{a}.
  \]
Coupled with the previously establish inequality \eqref{eq:cont}, this implies that under the present hypotheses we have $\abs{Y(a,z)}=\abs{a}$. Thus, $Y$ is an isometric embedding, and this confirms in particular that the state-field correspondence is continuous, as required by condition (2) of Definition \ref{compatabsval}. Therefore we may apply Proposition \ref{p:compatabsval} to complete the proof.
\end{proof}
\begin{rmk}
We point out that the integral structure above was necessary to ensure a uniform bound for the modes $a(n)$, independent of $n\in\ZZ$, as required by the definition of a $p$-adic field.
\end{rmk}

\begin{rmk}
  There is a second, equivalent way to obtain a $p$-adic VOA from $U$ (cf.\ Subsection \ref{SSprojlim}).\ First observe that the $\Zp$-submodules $p^nU$ are  $2$-sided ideals defined over $\Zp$. We set
\[
  U' = \varprojlim U/p^nU.
\]
This is a limit of vertex rings over the finite rings $\ZZ/p^n\ZZ$ and $U'$ has a natural structure of $\ZZ_p$-module. Let $V' = U'\otimes_{\ZZ_p}\QQ_p$. Then this carries a natural structure of $p$-adic VOA that agrees with the sup-norm construction above.
\end{rmk}

\section{Further remarks on \texorpdfstring{$p$}{p}-adic locality}
\label{s:locality}
The Goddard axioms of Section \ref{s:goddard} illustrate that if $Y(a,z)$ and $Y(b,z)$ are $p$-adic vertex operators, then they are \emph{mutually $p$-adically local} in the sense that
\[
  \lim_{n\to\infty} (x-y)^n[Y(a,x),Y(b,y)]=0.
\]
In this section we adapt some standard arguments from the theory of algebraic vertex algebras on locality to this $p$-adic setting.

Let us first analyze what sort of series $Y(a,x)Y(b,y)$ is. We write
\[
  Y(a,x)Y(b,y) = \sum_{m,n\in\ZZ} a(n)b(m)x^{-n-1}y^{-m-1}.
\]
We wish to show that:
\begin{enumerate}
\item there exists $M \in \RR_{\geq 0}$ such that $\abs{a(n)b(m)c} \leq M\abs{c}$ for all $c \in V$ and all $n,m\in\ZZ$;
\item $\lim_{m,n\to\infty} a(n)b(m)c=0$ for all $c \in V$.
\end{enumerate}
Property (1) is true since $a(n)$ and $b(m)$ are each bounded operators on $V$, and thus so is their composition $a(n)b(m)$. For (2), notice that since there is a constant $M$ such that $\abs{a(n)c'} \leq M\abs{c'}$ for all $c' \in V$, we have
\[
  \abs{a(n)b(m)c} \leq M\abs{b(m)c} \to 0
\]
as $m \to \infty$, where convergence is independent of $n$. If instead $n$ grows, then $a(n)b \to 0$ and continuity of $Y(\bullet,z)$ yields $Y(a(n)b,z) \to 0$. Then uniformity of the sup-norm likewise yields $\lim_{n\to\infty} \abs{a(n)b(m)c}=0$ uniformly in $m$. Thus for every $\veps > 0$, there are at most finitely many pairs $(n,m)$ of integers $n,m\geq 0$ with $\abs{a(n)b(m)c} \geq \veps$. This establishes Property (2) above.

In order to give an equivalent formulation of $p$-adic locality, we introduce the formal $\delta$-function following Kac \cite{Kac}:
\[
  \delta(x-y) \df x^{-1}\sum_{n\in\ZZ} \left(\frac{x}{y}\right)^n.
\]
Likewise, let $\partial_x = d/dx$ and define $\partial^{(j)}_x = \frac{1}{j!}\partial^j$ for $j\geq 0$. The following result is a $p$-adic analogue of part of Theorem 2.3 of \cite{Kac}.

\begin{prop}
  \label{p:plocal}
  Let $V$ be a $p$-adic Banach space and let $Y(a,z)$, $Y(b,z) \in \cF(V)$ be $p$-adic fields on $V$. Then the following are equivalent:
  \begin{enumerate}
  \item $\lim_{n\to \infty} (x-y)^n[Y(a,x),Y(b,y)] = 0$;
  \item there exist unique series $c^j \in \pseries{\Endo(V)}{y,y^{-1}}$ with $\lim_{j \to \infty}c^j = 0$ such that
    \[
  [Y(a,x),Y(b,y)] = \sum_{j=0}^\infty c^j \partial_y^{(j)}\delta(x-y).
    \]
  \end{enumerate}
\end{prop}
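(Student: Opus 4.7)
The proof rests on two formal identities for the delta-function, identical to those in Kac's classical treatment. A direct calculation gives $(x-y)\partial_y^{(j)}\delta(x-y) = \partial_y^{(j-1)}\delta(x-y)$ for $j \geq 1$ while $(x-y)\delta(x-y) = 0$; iterating, $(x-y)^n \partial_y^{(j)}\delta(x-y) = \partial_y^{(j-n)}\delta(x-y)$ for $0 \leq n \leq j$ and vanishes for $n > j$. Second, $\Res_x \partial_y^{(j)}\delta(x-y) = \delta_{j,0}$, and combining yields $\Res_x (x-y)^k \partial_y^{(j)}\delta(x-y) = \delta_{j,k}$ for all $j, k \geq 0$. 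Throughout, I use the sup-norm on Laurent coefficients, transferred from $\pseries{\Endo(V)}{x,y,x^{-1},y^{-1}}$ to $\pseries{V}{x,y,x^{-1},y^{-1}}$ by evaluation at $w \in V$; as the arguments in Subsection \ref{SSacloc} show, condition (1) amounts to uniform convergence of the Laurent coefficients of $(x-y)^n [Y(a,x),Y(b,y)]w$ to $0$ as $n \to \infty$, for each fixed $w$, and $\lim_j c^j = 0$ in (2) similarly means uniform convergence of $c^j\cdot w$ to $0$ for each $w$.

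For (2) $\Rightarrow$ (1), I would multiply the expansion by $(x-y)^n$. Since $c^j(y)$ does not involve $x$ and multiplication by $(x-y)$ is a sup-norm contraction, this operation commutes with the infinite sum, and the $\delta$-function identity collapses the result to
\begin{align*}
(x-y)^n \sum_{j\geq 0} c^j(y)\partial_y^{(j)}\delta(x-y) = \sum_{j\geq n} c^j(y)\partial_y^{(j-n)}\delta(x-y).
\end{align*}
The Laurent coefficients of $\partial_y^{(j-n)}\delta(x-y)$ are integer binomials, hence $p$-adically bounded by $1$; applied to any fixed $w$, each summand has sup-norm at most $\|c^j\cdot w\|$, and by the ultrametric the whole sum has norm at most $\sup_{j \geq n} \|c^j\cdot w\|$, which tends to $0$.

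For (1) $\Rightarrow$ (2), I would define $c^j(y) := \Res_x (x-y)^j [Y(a,x),Y(b,y)]$. Applied to any $w$, each Laurent coefficient of $c^j\cdot w$ is a particular Laurent coefficient of $(x-y)^j [Y(a,x),Y(b,y)]w$ (the $x^{-1}$ slice), so $\|c^j\cdot w\| \leq \|(x-y)^j [Y(a,x),Y(b,y)]w\|$, which tends to $0$ by (1). The bound $\|c^j \partial_y^{(j)}\delta(x-y)\cdot w\| \leq \|c^j\cdot w\|$ and the ultrametric show that $G := \sum_{j \geq 0} c^j(y)\partial_y^{(j)}\delta(x-y)$ converges when applied to $w$. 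Setting $H := [Y(a,x),Y(b,y)] - G$, the residue identity yields $\Res_x (x-y)^k H = c^k - c^k = 0$ for every $k \geq 0$, while $\|(x-y)^N H \cdot w\| \to 0$ for each $w$ by combining (1) with the direction already proved. Uniqueness of the $c^j$ is immediate from the residue identity $\Res_x (x-y)^k \partial_y^{(j)}\delta(x-y) = \delta_{j,k}$ applied to any hypothetical expansion.

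The main obstacle is to deduce $H = 0$ from these two properties; this is where the $p$-adic proof departs most from the classical finite-order argument. I plan to argue by a double induction. First, inducting on $k \geq 0$, the expansion
\begin{align*}
\Res_x (x-y)^k H = \sum_{\ell=0}^k \binom{k}{\ell}(-y)^{k-\ell}H_{-\ell-1}(y),
\end{align*}
where $H_i(y)$ denotes the $x^i$-coefficient of $H$, isolates $H_{-k-1}(y)$ as the only term not previously forced to vanish, so $H_{-k-1} = 0$ for every $k \geq 0$; hence $H(x,y) = \sum_{i \geq 0} H_i(y) x^i$. Second, inducting on $i \geq 0$: once $H_0 = \cdots = H_{a-1} = 0$, the coefficient of $x^a$ in $(x-y)^N H$ receives a contribution only from the $\ell = 0$ term in the binomial expansion of $(x-y)^N$ (larger $\ell$ would pair with $H_i$ for $i < a$, which vanish), and equals $(-1)^N y^N H_a(y)$. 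Applied to any $w$, the corresponding coefficient of $(x-y)^N H\cdot w$ has sup-norm equal to $\|H_a\cdot w\|$, which is independent of $N$; the locality hypothesis forces this quantity to tend to $0$, so $H_a\cdot w = 0$. As $w$ was arbitrary, $H_a = 0$. This closes the induction, giving $H = 0$ and $F = G$, completing the proof.
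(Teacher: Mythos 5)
Your proof is correct and follows essentially the same route as the paper's: both decompose the commutator into a delta-function part plus a part regular in $x$, identify $c^j = \Res_x\,(x-y)^j[Y(a,x),Y(b,y)]$, and use the shift identity $(x-y)\partial_y^{(j)}\delta(x-y)=\partial_y^{(j-1)}\delta(x-y)$ to pass between (1) and (2). The only difference is presentational: the paper outsources the decomposition and the formal identities to Kac's Propositions 2.1 and 2.2 and dismisses the regular part $b(x,y)$ with a one-line remark, whereas your double induction showing $H=0$ (the coefficient of $x^a$ in $(x-y)^N H$ being $(-y)^N H_a$, of norm independent of $N$) is a careful, self-contained justification of exactly that step.
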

\begin{proof}
  First, notice that by part (c) of Proposition 2.2 of \cite{Kac}, we can uniquely write
 \begin{align*}
   [Y(a,x),Y(b,y)] &= \sum_{j\geq 0} c^j(y)\partial_y^{(j)}(x-y)+b(x,y),\\
   b(x,y) &= \sum_{\substack{m\in\ZZ_{\geq 0}\\ n\in \ZZ}} a_{m,n}x^my^n.
 \end{align*}
 for series $c^j(y) \in \pseries{\Endo(V)}{y,y^{-1}}$ and $a_{m,n} \in V$.

 Suppose that (1) holds. Then by part (e) of Proposition 2.1 of \cite{Kac}, we find that $\lim_{n\to\infty} (x-y)^nb(x,y)=0$. Since $b(x,y)$ is constant and the sequence $(x-y)^n$ does \emph{not} have a $p$-adic limit, the only way this can transpire is if $b(x,y)=0$. Then by parts (e) and (d)  of Proposition 2.1 in \cite{Kac}, we now have
 \begin{equation}
   \label{eq:locality}
   (x-y)^n[Y(a,x),Y(b,y)] =\sum_{j=0}^\infty c^{j+n}(y)\partial_y^{(j)}(x-y)
 \end{equation}
 By equation (2.2.2) of \cite{Kac}, the coefficient of the $x^{-1}$-term in this expression is the series $c^{j+n}(y)$. By definition of the sup-norm on formal series, as $n$ grows, then since $(x-y)^n[Y(a,x),Y(b,y)]$ tends to zero $p$-adically, we find that $c^{j+n}(y)$ must have coefficients that become more and more highly divisible by $p$. Thus $\lim_{j} c^j=0$ in the sup-norm, which confirms that (2) holds.

 Conversely, if (2) holds, then we deduce that equation \eqref{eq:locality} holds as in the previous part of this proof. Since the coefficients of $\delta^{(j)}_y(x-y)$ are integers, and the coefficients of $c^{n}(y)$ go to zero $p$-adically in the sup-norm as $n$ grows, we find that (1) follows from equation \eqref{eq:locality} by the strong triangle ineqaulity.
\end{proof}

Recall formula (2.1.5b) of \cite{Kac}:
\[
  \partial^{(j)}_y\delta(x-y) = \sum_{m\in\ZZ} \binom{m}{j}x^{-m-1}y^{m-j}.
\]
Thus, if $Y(a,x)$ and $Y(b,y)$ are mutually $p$-adically local, then we deduce that
\begin{align*}
  [Y(a,x),Y(b,y)] =& \sum_{j=0}^\infty  c^j(y)\partial^{(j)}_y(x-y) \\
  =& \sum_{j=0}^\infty\sum_{m\in\ZZ} \binom{m}{j}c^j(y)x^{-m-1}y^{m-j}\\
  =&\sum_{j=0}^\infty\sum_{m\in\ZZ}\sum_{n\in\ZZ} \binom{m}{j}c^j(n)x^{-m-1}y^{m-n-j-1}\\
  =&\sum_{m\in\ZZ}\sum_{n\in\ZZ} \left(\sum_{j=0}^\infty\binom{m}{j}c^j(m+n-j)\right)x^{-m-1}y^{-n-1}
\end{align*}
Thus, we deduce the fundamental identity:
\begin{equation}
  \label{eq:commutator1}
  [a(m),b(n)] = \sum_{j=0}^\infty \binom{m}{j} c^j(m+n-j).
\end{equation}
This series converges by the strong triangle inequality since $c^j \to 0$.
\begin{rmk}
  As a consequence of this identity, one can deduce the $p$-adic operator product expansion as in equation (2.3.7b) of \cite{Kac}. The only difference is that it now involves an infinite sum: if $Y(a,x)$ and $Y(b,y)$ are two mutually local $p$-adic fields, then the coefficients of the $c^j$ in equation \eqref{eq:commutator1} leads to an expression
  \[
    Y(a,x)Y(b,y) \sim \sum_{j=0}^\infty \frac{c^j(y)}{(x-y)^{j+1}}
  \]
  which is strictly an abuse of notation that must be interpreted as in \cite{Kac}. Again, this $p$-adic OPE converges thanks to the fact that $c^j \to 0$.
\end{rmk}

As in the algebraic theory of VOAs, when $Y(a,x)$ and $Y(b,x)$ are mutually $p$-adically local, one has the identity $c^j(n) = (a(j)b)(n)$ and equation \eqref{eq:commutator1} specializes to the commutator formula \eqref{commform}. 
\begin{dfn}
Let $V$ be a $p$-adic vertex algebra, and let $L(V) \subseteq \Endo(V)$ denote the $p$-adic closure of the linear span of the modes of every field $Y(a,x)$ for $a \in V$.
\end{dfn}

\begin{thm}
The space $L(V)$ is a Lie algebra with commutators acting as Lie bracket.
\end{thm}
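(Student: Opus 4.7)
The plan is to verify that $L(V)$ is closed under the commutator bracket inherited from $\Endo(V)$; the Jacobi identity and anti-symmetry are then automatic, since $\Endo(V)$ is an associative $\QQ_p$-algebra and these identities hold in any such algebra under $[A,B]=AB-BA$.

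\textbf{Step 1: Commutators of modes.} Let $a,b \in V$ and $m,n \in \ZZ$. By the commutator formula (Proposition \ref{p:commutator}),
\[
  [a(m), b(n)] = \sum_{i=0}^\infty \binom{m}{i}\,(a(i)b)(m+n-i).
\]
I must show that this infinite series converges in the operator norm on $\Endo(V)$, and that its sum lies in $L(V)$. Since $a(z) \in \cF(V)$ we have $\lim_{i\to\infty} a(i)b = 0$ in $V$. The continuity of the state--field correspondence $Y \colon V \to \cF(V)$ then yields $Y(a(i)b,z) \to 0$ in the sup-norm on $\cF(V)$. In particular, each mode $(a(i)b)(m+n-i)$ is bounded in operator norm by $\abs{Y(a(i)b,z)}$, and hence $\abs{(a(i)b)(m+n-i)} \to 0$. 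Since $\binom{m}{i} \in \ZZ$ has $p$-adic absolute value at most $1$, Lemma \ref{l:sums} implies that the series converges in $\Endo(V)$. Each partial sum is a finite $\ZZ$-linear combination of modes, hence lies in the linear span used to define $L(V)$, and the full sum lies in its $p$-adic closure $L(V)$.

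\textbf{Step 2: Commutators of finite linear combinations.} If $X = \sum_i \alpha_i\, a_i(m_i)$ and $Y = \sum_j \beta_j\, b_j(n_j)$ are finite $\QQ_p$-linear combinations of modes, bilinearity of the bracket gives $[X,Y] = \sum_{i,j} \alpha_i \beta_j [a_i(m_i), b_j(n_j)]$, which lies in $L(V)$ by Step 1.

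\textbf{Step 3: Passage to the closure.} The operator norm on $\Endo(V)$ is submultiplicative: $\abs{AB}\leq \abs{A}\abs{B}$. Hence composition is jointly continuous, and therefore so is the commutator $(A,B) \mapsto [A,B]$. Given arbitrary $X,Y \in L(V)$, choose sequences $X_k,Y_k$ in the linear span of modes with $X_k \to X$ and $Y_k \to Y$. Then
\[
  \abs{[X_k,Y_k] - [X,Y]} \leq \max\bigl(\abs{X_k}\abs{Y_k-Y},\,\abs{X_k - X}\abs{Y},\,\abs{Y_k}\abs{X_k-X},\,\abs{Y_k-Y}\abs{X}\bigr) \to 0,
\]
so $[X_k,Y_k] \to [X,Y]$. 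Each $[X_k,Y_k]$ lies in $L(V)$ by Step 2, and since $L(V)$ is closed by definition, $[X,Y] \in L(V)$ as required.

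\textbf{Main obstacle.} The only non-formal point is Step 1: justifying that the commutator formula, which is a priori only a formal identity among modes, actually converges in the Banach algebra $\Endo(V)$ so that $[a(m),b(n)]$ can be literally identified with an element of $L(V)$. This is where the $p$-adic field axioms together with continuity of $Y$ are essential; without continuity of $Y$ one could not pass from $a(i)b \to 0$ in $V$ to the uniform decay of the operator norms $\abs{(a(i)b)(m+n-i)}$ needed for convergence.
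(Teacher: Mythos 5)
Your proof is correct and follows essentially the same route as the paper's: show $[a(m),b(n)]\in L(V)$ via the commutator formula, extend by bilinearity to the span of the modes, and pass to the closure by continuity of the bracket. Your Steps 1 and 3 usefully fill in details the paper leaves implicit — namely that the series in the commutator formula converges in the operator norm (not merely pointwise on each $w$) via continuity of $Y$ and Lemma \ref{l:sums}, and the explicit estimate showing the bracket is jointly continuous — but the underlying argument is identical.
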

\begin{proof}
  It is clear that $L(V)$ is a subspace, so it remains to prove that it is closed under commutators. Let $L\subseteq L(V)$ denote the dense subspace spanned by the modes. First observe that if $a,b \in V$, then $[a(n),b(m)] \in L(V)$ for all $n,m\in\ZZ$, thanks to equation \eqref{commform}. It follows by linearity that $[L,L]\subseteq L(V)$. The closure of $[L,L]$ is equal to $[L(V),L(V)]$, and so we deduce $[L(V),L(V)]\subseteq L(V)$, which concludes the proof.
\end{proof}

Finally, we elucidate some aspects of rationality of locality in this $p$-adic context, modeling our discussion on Proposition 3.2.7 of \cite{LepowskyLi}. Let $V$ be a $p$-adic vertex algebra with a completed graded decomposition
\[
  V = \widehat{\bigoplus}_{n} V_{(n)},
\]
where $V_n=0$ if $n \ll 0$. The direct sum $V'$ of the duals of each finite-dimensional subspace $V_{(n)}$ consists of the linear functionals $\ell$ on $V$ such that $\ell|_{V_{(n)}} = 0$ for all but finitely many $n\in \ZZ$. This space is not $p$-adically complete, so we let $V^*$ denote its completion.
\begin{lem}
The space $V^*$ is the full continuous linear dual of $V$.
\end{lem}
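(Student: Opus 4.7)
The plan is to construct mutually inverse continuous $\Qp$-linear maps between $V^*$ and the continuous dual $V^{\vee} \df \Hom(V, \Qp)$, thereby identifying them.

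I would first establish the inclusion $V^* \hookrightarrow V^{\vee}$. Any element of $V'$ is a finite sum of functionals on finite-dimensional graded pieces $V_{(n)}$, and so it is automatically continuous on $V$; in particular $V' \subseteq V^{\vee}$. The norm on $V'$ used to form $V^*$ is the restriction of the operator norm inherited from $V^{\vee}$, and $V^{\vee}$ is complete as a space of continuous $\Qp$-linear maps into a Banach space (cf.\ Section \ref{s:banach}). Therefore the inclusion $V' \hookrightarrow V^{\vee}$ extends uniquely by continuity to an isometric embedding $V^* \hookrightarrow V^{\vee}$.

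For the reverse direction, given $\phi \in V^{\vee}$, I would define its graded restrictions $\phi_n \df \phi|_{V_{(n)}} \in V_{(n)}^*$ and truncations $\phi^{(N)} \df \sum_{n\leq N}\phi_n \in V'$. Because $V_{(n)} = 0$ for $n \ll 0$ and every $v \in V$ admits a graded expansion $v = \sum_n v_n$ with $v_n \to 0$, continuity of $\phi$ yields
\[
\phi(v) \;=\; \lim_{N\to\infty}\phi\!\left(\sum_{n\leq N} v_n\right) \;=\; \sum_n \phi_n(v_n),
\]
so $\phi$ is pointwise determined by the family $(\phi_n)$. I would then argue that the truncations $\phi^{(N)}$ form a Cauchy sequence converging to $\phi$ in the norm defining $V^*$, which places $\phi$ in the completion and completes the identification $V^* \cong V^{\vee}$.

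The main obstacle is precisely this last upgrade from pointwise convergence $\phi^{(N)}(v) \to \phi(v)$ to norm convergence $\phi^{(N)} \to \phi$ in $V^*$. Concretely, the restricted operator norm gives $\|\phi - \phi^{(N)}\| = \sup_{n>N}\|\phi_n\|$, so the matter reduces to showing $\|\phi_n\| \to 0$ as $n\to\infty$ for any continuous $\phi$ on $V$. I would extract this decay from the interplay between the defining sup-norm $|v| = \sup_n |v_n|$ on the graded completion and the hypothesis $\|\phi\| < \infty$: were there a subsequence $(n_k)$ with $\|\phi_{n_k}\|$ bounded below, one could, using finite-dimensionality of the $V_{(n_k)}$, select unit vectors $w_{n_k} \in V_{(n_k)}$ and scalar weights tailored so that the resulting $v = \sum_k \alpha_k w_{n_k} \in V$ witnesses $\phi$ violating its own continuity bound. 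This strong-triangle-inequality / contradiction argument, which exploits both the $p$-adic structure of $V$ and the finite-dimensionality of each graded piece, is the technical crux of the proof.
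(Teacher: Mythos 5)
Your proposal follows the same route as the paper's proof: restrict a continuous functional $\phi$ to its graded components $\phi_n = \phi|_{V_{(n)}}$ and try to show that the truncations $\phi^{(N)}$ converge to $\phi$ in the operator norm. You have correctly isolated the crux — $\|\phi-\phi^{(N)}\| = \sup_{n>N}\|\phi_n\|$, so everything rests on proving $\|\phi_n\|\to 0$ — whereas the paper's own proof simply asserts $\ell_N\to\ell$ at this point with no justification. The trouble is that this decay statement is false for a general continuous functional, so your proposed contradiction argument cannot be completed. Take $V=c(\NN)$ with orthonormal basis $(e_n)$ and $V_{(n)}=\Qp e_n$, and set $\phi\left(\sum_n v_ne_n\right)=\sum_n v_n$; the sum converges because $v_n\to 0$, and $\abs{\phi(v)}\leq\sup_n\abs{v_n}=\abs{v}$, so $\phi$ is continuous of norm $1$, yet $\|\phi_n\|=1$ for every $n$. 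Your plan to pick unit vectors $w_{n_k}\in V_{(n_k)}$ with $\|\phi_{n_k}\|$ bounded below and tailor scalars $\alpha_k$ so that $v=\sum_k\alpha_kw_{n_k}$ violates the continuity bound is defeated by the very ultrametric inequality you invoke: one always has $\abs{\phi(v)}\leq\sup_k\abs{\alpha_k}\,\|\phi_{n_k}\|\leq\|\phi\|\,\abs{v}$, so contributions from distinct graded pieces can never accumulate. This is the opposite of the archimedean picture (where the dual of $c_0$ is $\ell^1$): nonarchimedeanly, the continuous dual of $c(I)$ is the space of \emph{bounded} families, while the norm-completion of the restricted dual is the strictly smaller space of families tending to zero.

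So the gap is genuine, and it is not only yours: the functional above lies in the continuous dual but at operator-norm distance $1$ from every element of $V'$, hence is not in $V^*$, and the lemma as stated fails whenever $V$ is infinite-dimensional. What is true, and what Lemma \ref{l:rationality1} actually uses, is the characterization of $V^*$ as the set of continuous functionals $\phi$ with $\|\phi_n\|\to 0$; if you want a correct statement, prove that instead of the claimed identification with the full continuous dual.
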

\begin{proof}
  Let $\ell \colon V \to  \Qp$ be a continuous linear functional. We must show that $\ell$ is approximated arbitrarily well by functionals in the restricted dual $V'$. Continuity of $\ell$ asserts the existence of $M$ such that $\abs{\ell(v)} \leq M\abs{v}$ for all $v \in V$. Let $\delta_n \in V'$ denote the functional that is the identity on $V_{(n)}$ and zero on $V_{(m)}$ for $m\neq n$, and write
  \[
  \ell_N = \sum_{-N\leq n\leq N} \delta_n\ell.
\]
We clearly have $\ell_N \in V'$ for all $N$. Then $\ell_N \to \ell$ expresses $\ell$ as a limit of elements of $V'$. This concludes the proof.
\end{proof}

Note that we did not need the condition $V_{(n)}=0$ if $n$ is small enough in the preceding proof.

\begin{dfn}
The \emph{Tate algebra} $\Qp\langle x\rangle$ consists of series in $\pseries{\Qp}{x}$ whose coefficients go to zero.
\end{dfn}

The Tate algebra is the ring of rigid analytic functions on the closed unit disc in $\Qp$ that are defined over $\Qp$. More general Tate algebras are the building blocks of rigid analytic geometry in the same way that polynomial algebras are the building blocks of algebraic geometry. Below we use the slight abuse of notation $\Qp\langle x,x^{-1}\rangle$ to denote formal series in $x$ and $x^{-1}$ whose coefficients go to zero in both directions. Elements of $\Qp\langle x^{-1}\rangle[x]$ can be interpreted as functions on the region $\abs{x} \geq 1$, while elements of $\Qp\langle x,x^{-1}\rangle$ can be interpreted as functions on the region $\abs{x} =1$. Unlike in complex analysis, this boundary circle defined by $\abs{x}=1$ is a perfectly good rigid-analytic space.

\begin{lem}
  \label{l:rationality1}
  Let $V$ be a $p$-adic VOA, let $u,v \in V$, let $\ell_1 \in V'$, and let $\ell_2 \in V^*$. Then
  \begin{align*}
    \langle \ell_1,Y(u,x)v\rangle &\in \Qp \langle x^{-1}\rangle[x],\\
    \langle \ell_2,Y(u,x)v\rangle &\in \Qp\langle x,x^{-1}\rangle.
  \end{align*}
\end{lem}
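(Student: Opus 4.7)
The plan is to decompose $u$ and $v$ into graded components, control the gradings of the $u(n)v$ via Lemma \ref{lemgraded}, and then handle each assertion using either the restricted support of $\ell_1$ or the density of $V'$ in $V^*$.

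First, I would write $u = \sum_{m\geq t} u_m$ and $v = \sum_{k\geq t} v_k$ in the completed graded decomposition, where $u_m\in V_{(m)}$, $v_k\in V_{(k)}$ tend to zero in norm, and $V_{(j)}=0$ for $j<t$. Continuity of $Y\colon V \to \cF(V)$ together with continuity of each mode $a(n)$ gives
\[
u(n)v = \sum_{m,k\geq t} u_m(n)v_k,
\]
and by Lemma \ref{lemgraded} each summand lies in $V_{(m+k-n-1)}$.

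For the first claim, choose $a\leq b$ so that $\ell_1$ vanishes on $V_{(j)}$ for $j\notin[a,b]$; then $\ell_1(u_m(n)v_k)=0$ unless $m+k-n-1\in[a,b]$. Combined with $m,k\geq t$, this forces $n\geq 2t-b-1$, so $\ell_1(u(n)v)=0$ for $n\ll 0$ --- giving a uniform upper bound on the powers of $x$ appearing in $\ell_1(Y(u,x)v)$. For decay as $n\to\infty$, continuity of $Y$ supplies a constant $C$ with $|Y(w,z)|\leq C|w|$ for all $w\in V$, whence $|u_m(n)v_k|\leq C|u_m||v_k|$. Set $A\df\sup_m|u_m|$ and $B\df\sup_k|v_k|$, both finite. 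Given $\varepsilon>0$, pick $N$ with $|u_m|<\varepsilon$ for $m\geq N$ and $|v_k|<\varepsilon$ for $k\geq N$. When $n$ is large enough that every contributing $(m,k)$ satisfies $m+k\geq n+a+1\geq 2N$, at least one of $m,k$ exceeds $N$, so $|u_m||v_k|\leq\varepsilon\max(A,B)$, and the strong triangle inequality yields $|\ell_1(u(n)v)|\leq|\ell_1|\cdot C\cdot\varepsilon\cdot\max(A,B)$. This confirms $\ell_1(Y(u,x)v)\in\Qp\langle x^{-1}\rangle[x]$.

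For the second claim, the $n\to+\infty$ direction follows directly from $u(n)v\to 0$ (property (2) of Definition \ref{d:pfield}) and continuity of $\ell_2$. For the $n\to-\infty$ direction I would invoke the preceding Lemma, which identifies $V^*$ with the dual-norm completion of $V'$. Let $M$ be the constant of property (1) of Definition \ref{d:pfield} for $u$, so that $|u(n)v|\leq M|u||v|$ for all $n$; given $\varepsilon>0$, pick $\ell\in V'$ with $|\ell_2-\ell|<\varepsilon/(M|u||v|)$ (the case $u=0$ or $v=0$ being trivial). Part (1) applied to $\ell$ gives $\ell(u(n)v)=0$ for $n$ sufficiently negative, and therefore
\[
|\ell_2(u(n)v)| = |(\ell_2-\ell)(u(n)v)| \leq |\ell_2-\ell|\cdot|u(n)v| < \varepsilon
\]
for such $n$. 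The coefficients thus decay in both directions, so $\ell_2(Y(u,x)v)\in\Qp\langle x,x^{-1}\rangle$.

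The only subtle point is securing uniform estimates in $n$: for part (1), the global operator norm of $Y$ combined with the vanishing of the $u_m, v_k$ is what allows the strong triangle inequality to control the finite sum defining $\ell_1(u(n)v)$; for part (2), the uniform bound on $|u(n)|$ built into the definition of a $p$-adic field is what lets a single $V'$-approximation kill all sufficiently negative modes at once.
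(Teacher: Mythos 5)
Your argument is correct and follows essentially the same route as the paper: the same graded decomposition of $u$ and $v$, the same use of the grading shift $u_m(n)v_k \in V_{(m+k-n-1)}$ to kill all but finitely many positive powers of $x$ when $\ell_1$ has bounded support, and the same density argument approximating $\ell_2\in V^*$ by an element of $V'$ together with the uniform mode bound to control $n\to-\infty$. The only difference is that your $n\to\infty$ estimate for the $\ell_1$ case is done componentwise, where the paper simply cites $u(n)v\to 0$ and continuity of $\ell_1$; both are fine.
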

\begin{proof}
Write $u = \sum_{n\in \ZZ} u_n$ and $v = \sum_{n \in \ZZ} v_n$ where $u_n,v_n \in V_{(n)}$ for all $n$, and $\lim_{n\to \infty} u_n = 0$, $\lim_{n \to \infty} v_n = 0$. Since each $u(n)$ is continuous, we have
\[
  u(n)v = \sum_{b\in \ZZ} u(n)v_b.
\]
Likewise, since $u \mapsto Y(u,x)$ is continuous, we have $Y(u,x)= \sum_a Y(u_a,x)$ and hence
\[
  u(n)v = \sum_{a,b\in\ZZ} u_a(n)v_b.
\]
Since $u_a$ and $v_b$ are homogeneous of weight $a$ and $b$, respectively, we have that $u_a(n)v_b \in V_{a+b-n-1}$. If $\ell \colon V \to \Qp$ is a continuous linear functional, and if $V_{(n)} = 0$ for $n < M$, then we have
\[
  \ell(u(n)v) = \sum_{\substack{a,b\geq M\\ a+b\geq M+n+1}} \ell(u_a(n)v_b).
\]

First suppose that $\ell \in V'$, so that $\ell|_{V_{(n)}} = 0$ if $n > N$, where $M \leq N$. Hence in this case we have

\[
  \ell(u(n)v) = \sum_{a\geq M}\sum_{b=M+n+1-a}^{N+n+1-a}\ell(u_a(n)v_b)
\]
Suppose that $N+n+1-a < M$. Then each $v_b$ in the sum above must vanish, so that we can write the sum in fact as
\[
  \ell(u(n)v) = \sum_{a=M}^{N-M+n+1}\sum_{b=M+n+1-a}^{N+n+1-a}\ell(u_a(n)v_b)
\]
If $M > N-M+n+1$, or equivalently, $2M-N-1 > n$, then the sum on $a$ is empty and $\ell(u(n)v)$ vanishes. Therefore, $\langle \ell,Y(u,x)v\rangle$ has only finitely many nonzero terms in positive powers of $x$. If $n\to \infty$ then since $u(n)v\to 0$, and $\ell$ is continuous, we likewise see that the coefficients of the $x^{-n-1}$ terms go to zero as $n$ tends to infinity. This establishes the first claim of the lemma.

Suppose instead that $\ell \in V^*$, let $C$ be a constant such that $\abs{u(n)v} \leq C\abs{v}$ for all $n$ (we may assume without loss that $v\neq 0$), and write $\ell = \ell_1+\ell'$ where $\ell_1 \in V'$ and $\abs{\ell'} < \frac{\veps}{C\abs{v}}$. Then for each $n$,
\[
\abs{\ell'(u(n)v)} \leq \frac{\veps}{C\abs{v}}\abs{u(n)v} \leq \veps
\]
and so
\[
  \abs{\ell(u(n)v)} \leq \sup(\abs{\ell_1(u(n)v)}, \abs{\ell'(u(n)v)})\leq\sup(\abs{\ell_1(u(n)v)}, \veps)
\]
By the previous part of this proof, as $n$ tends to $-\infty$, eventually $\ell_1(u(n)v)$ vanishes. We thus see that $\ell(u(n)v) \to 0$ as $n \to -\infty$. Since $\ell$ is a continuous linear functional and $u(n)v \to 0$ as $n \to \infty$, we likewise get that $\ell(u(n)v) \to 0$ as $n \to \infty$. This concludes the proof.
\end{proof}

\begin{rmk}
A full discussion of rationality would involve a study of series
\[
  \langle \ell, Y(a,x)Y(b,y)v\rangle
\]
for $v \in V$ and $\ell \in V^*$, where $a$ and $b$ are mutually $p$-adically local. Ideally, one would show that such series live in some of the standard rings arising in $p$-adic geometry, similar to the situation of Lemma \ref{l:rationality1} above. It may be necessary to impose stronger conditions on rates of convergence of limits such as $\lim_{n\to \infty} a(n)b=0$ in the definition of $p$-adic field in order to achieve such results. We do not pursue this study here.
\end{rmk}

\section{The Heisenberg algebra}
\label{s:heisenberg}

We now discuss $p$-adic completions of the Heisenberg VOA of rank $1$, i.e., $c=1$ in detail, which was the motivation for some of the preceding discussion. In general there are many possible ways to complete such VOAs, as illustrated below, though we only endow a sup-norm completion with a $p$-adic VOA structure. Our notation follows \cite{Lepowsky}. To begin, define
\[
  S = \Qp[h_{-1},h_{-2},\ldots],
\]
a polynomial ring in infinitely many indeterminates. This ring carries a natural action of the Heisenberg Lie algebra. Recall that this algebra is defined by generators $h_n$ for $n\in \ZZ \setminus \{0\}$ and the central element $1$, subject to the canonical commutation relations
\begin{eqnarray}\label{CCRs}
  [h_m,h_n]=m\delta_{m+n,0}1. 
\end{eqnarray}
The Heisenberg algebra acts on $S$ as follows: if $n < 0$ then $h_n$ acts by multiplication, $1$ acts as the identity, while for $n > 0$, the generator $h_n$ acts as $n\frac{d}{dh_{-n}}$. Then $S$ carries a natural structure of VOA as discussed in Chapter 2 of \cite{FBZ}, where the Virasoro action is given by
\begin{align*}
  c &\mapsto 1,\\
  L_n &\mapsto \frac 12 \sum_{j\in\ZZ} h_jh_{n-j}, \quad n\neq 0,\\
  L_0 & \mapsto \frac 12 \sum_{j\in\ZZ} h_{-\abs{j}}h_{\abs{j}}.
\end{align*}

Elements of $S$ are polynomials with finitely many terms, and this space can be endowed with a variety of $p$-adic norms. We describe a family of such norms that are indexed by a real parameter $r\geq 0$, which are analogues of norms discussed in Chapter 8 of \cite{Kedlaya} for polynomial rings in infinitely many variables.

For $I$ a finite multi-subset of $\ZZ_{< 0}$, let $h^I = \prod_{i \in I} h_i$ and define $\abs{I} = -\sum_{i \in I}i$, so that $h^I$ has degree $\abs{I}$. For fixed $r \in \RR_{>0}$ define a norm
\[
\abs{\sum_{I} a_I h^I}_r = \sup_I\abs{a_I}r^{\abs{I}}.
\]
For example, when $r = 1$, this norm agrees with the sup-norm corresponding to the integral basis given by the monomials $h^I$. Let $S_r$ denote the completion of $S$ relative to the norm $a \mapsto \abs{a}_r$.
\begin{prop}
  The ring $S_r$ consists of all series $\sum_{I}a_Ih^I \in \pseries{\Qp}{h(-1),h(-2),\ldots}$ such that
  \[
\lim_{\abs{I} \to \infty} \abs{a_I}r^{\abs I} = 0.
  \]
\end{prop}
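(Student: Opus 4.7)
The plan is to show that the candidate space
\[
  T_r \df \left\{\sum_I a_I h^I \in \pseries{\Qp}{h_{-1},h_{-2},\ldots} \,\middle|\, \lim_{\abs{I}\to\infty} \abs{a_I} r^{\abs I} = 0\right\}
\]
is itself a $p$-adic Banach space into which $S$ embeds isometrically as a dense subspace, and then invoke the universal property of the completion to identify $S_r$ with $T_r$. To make sense of all this, note first that for every fixed $n$ there are only finitely many multi-subsets $I$ of $\ZZ_{<0}$ with $\abs{I}=n$ (the partitions of $n$), so the condition ``$\abs{a_I}r^{\abs I}\to 0$ as $\abs I \to \infty$'' is well-defined, and the supremum $\abs{a}_r \df \sup_I \abs{a_I} r^{\abs I}$ is finite and attained.

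The key technical step is to prove that $T_r$ is complete. Let $(a^{(n)})_{n\geq 1}$ be a Cauchy sequence in $T_r$. For each fixed $I$ one has $\abs{a^{(m)}_I - a^{(n)}_I} \leq r^{-\abs I}\abs{a^{(m)}-a^{(n)}}_r$, so the coefficient sequences are Cauchy in $\Qp$ and converge to some $a_I\in \Qp$. Set $a\df \sum_I a_I h^I$ as a formal series. Given $\veps>0$, pick $N$ so that $\abs{a^{(m)}-a^{(n)}}_r<\veps$ for $m,n\geq N$; letting $m\to\infty$ term by term yields $\abs{a_I-a^{(n)}_I}r^{\abs I}\leq \veps$ for every $I$ and every $n\geq N$. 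Since $a^{(N)}\in T_r$, there exists $K$ such that $\abs{a^{(N)}_I}r^{\abs I}<\veps$ whenever $\abs I>K$, and the ultrametric inequality then forces
\[
  \abs{a_I}r^{\abs I}\leq \max\bigl(\abs{a_I-a^{(N)}_I}r^{\abs I},\,\abs{a^{(N)}_I}r^{\abs I}\bigr)\leq \veps
\]
for $\abs I>K$. This shows both that $a\in T_r$ and that $a^{(n)}\to a$ in $T_r$, so $T_r$ is complete.

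To finish, observe that $S\subseteq T_r$ with the norm on $S$ agreeing with the restriction of $\abs\cdot_r$, and that $S$ is dense in $T_r$: for any $a=\sum_I a_I h^I\in T_r$, the truncations $a_{\leq n}\df \sum_{\abs I\leq n} a_I h^I$ lie in $S$ and $\abs{a-a_{\leq n}}_r = \sup_{\abs I>n}\abs{a_I}r^{\abs I}\to 0$ by the defining condition. Since the completion of a normed space is unique up to isometric isomorphism extending the identity on the dense subspace, we conclude $S_r\cong T_r$, which is the claim. The only delicate point is the completeness of $T_r$ handled above; everything else is a routine unwinding of definitions.
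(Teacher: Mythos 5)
Your proof is correct and rests on the same coefficientwise mechanism as the paper's: extract the limit coefficient $a_I$ for each fixed multiset $I$ using $\abs{a^{(m)}_I - a^{(n)}_I} \leq r^{-\abs I}\abs{a^{(m)}-a^{(n)}}_r$, then use the ultrametric inequality to see that the limiting series has decaying coefficients. The packaging differs slightly, and in your favour. The paper only takes Cauchy sequences drawn from $S$ itself and uses the fact that each $a^{(n)}$ has finite support to conclude $\abs{a_I}r^{\abs I} = \abs{a_I - a^{(n)}_I}r^{\abs I} < \veps$ for $\abs I$ large; this shows that every limit of a Cauchy sequence in $S$ lands in the candidate space, i.e.\ one inclusion, but it leaves implicit the converse fact that every element of the candidate space actually arises as such a limit. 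You supply exactly that missing half by proving completeness of $T_r$ for arbitrary Cauchy sequences and density of $S$ via the truncations $a_{\leq n}$, and then invoking uniqueness of completions. The one small point worth flagging: your claim that the supremum $\abs{a}_r$ is ``finite and attained'' needs the decay hypothesis (for a general formal series it can be infinite), but since you only ever use the norm on elements of $T_r$ and on differences of such elements, nothing breaks.
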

\begin{proof}
  Let $a^n$ denote a Cauchy sequence in $S$ relative to the norm $\abs{a}_r$, so that we can write $a^n = \sum_{I}a^n_Ih^I$ for each $n\geq 0$. We first show that for each fixed indexing multiset $J$, the sequence $a_J^n$ is Cauchy in $\Qp$ and thus has a well-defined limit. For this, let $\veps > 0$ be given and choose $N$ such that $\abs{a^n-a^m} < \veps r^{\abs{J}}$ for all $n,m > N$.  This means that
  \[
  \abs{a^n_J-a^m_J} r^{\abs{J}} \leq \abs{a^n-a^m} < \veps r^{\abs{J}}
\]
for all $n,m > N$, so that the sequence $(a^n_J)$ is indeed Cauchy. Let $a_J$ denote its limit in $\Qp$ and let $a = \sum_J a_Jh^J$.

There exists an index $n$ such that $\abs{a-a^n} < \veps$. But notice that since $a^n \in S$, it follows that $a^n_I = 0$ save for finitely many multisets $I$. In particular, if $\abs{I}$ is large enough we see that
\[
  \abs{a_I}r^{\abs{I}} = \abs{a_I-a_I^n}r^{\abs{I}} \leq \abs{a-a^n} < \veps.
\]
Therefore $\lim_{\abs{I} \to \infty} \abs{a_I}r^{\abs{I}}=0$ as claimed, and this concludes the proof.
\end{proof}

\begin{cor}
If $0 < r_1 < r_2$ then there is a natural inclusion $S_{r_2} \subseteq S_{r_1}$.
\end{cor}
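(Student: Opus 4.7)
The plan is to exploit the explicit description of $S_r$ provided by the preceding proposition, which characterizes $S_r$ as the space of formal series $\sum_I a_I h^I$ satisfying $\lim_{|I|\to\infty} |a_I| r^{|I|} = 0$. This reduces the statement to a direct comparison of two decay conditions.

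First I would observe that for any finite multisubset $I$ of $\ZZ_{<0}$ one has $|I| = -\sum_{i\in I} i \geq 0$. Therefore, since $0 < r_1 < r_2$, the monotonicity of exponentiation in the base gives $r_1^{|I|} \leq r_2^{|I|}$ for every admissible $I$. Consequently
\[
|a_I|\, r_1^{|I|} \leq |a_I|\, r_2^{|I|}
\]
for each $I$. Given $a = \sum_I a_I h^I \in S_{r_2}$, the right-hand side tends to zero as $|I| \to \infty$ by the previous proposition, so the left-hand side does too. Hence $a \in S_{r_1}$, which yields the set-theoretic inclusion $S_{r_2} \subseteq S_{r_1}$.

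To justify calling this inclusion \emph{natural} I would note that the map is simply the identity on underlying series, and taking suprema in the same inequality gives
\[
|a|_{r_1} = \sup_I |a_I|\, r_1^{|I|} \leq \sup_I |a_I|\, r_2^{|I|} = |a|_{r_2}.
\]
Thus the inclusion $S_{r_2} \hookrightarrow S_{r_1}$ is a norm-decreasing (in particular continuous) $\Qp$-linear map, which is the sense in which it is natural. There is no real obstacle here; the only mild point to check is that $|I| \geq 0$, which ensures we do not have to worry about $r_1^{|I|}$ being \emph{larger} than $r_2^{|I|}$ on some indices.
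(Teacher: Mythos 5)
Your proof is correct and follows essentially the same route as the paper: both reduce the claim to the characterization of $S_r$ from the preceding proposition and use $r_1^{\abs{I}} \leq r_2^{\abs{I}}$ (valid since $\abs{I}\geq 0$) to compare the two decay conditions. Your added remarks that the inclusion is the identity on series and is norm-decreasing are a harmless elaboration beyond the paper's one-line argument.
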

\begin{proof}
This follows immediately from the previous proposition since $r_1 < r_2$ implies that $\abs{a_I}r_1^{\abs{I}} < \abs{a_I} r_2^{\abs{I}}$.
\end{proof}

\begin{rmk}
  Let $\widehat S = \bigcap_{r > 0} S_r$. The ring $\widehat S$ consists of all series $\sum_{I}a_Ih^I$ such that $\lim_{\abs{I}\to\infty} \abs{a_I}r^{\abs{I}}=0$ for all $r > 0$. It contains $S$ but is strictly larger: for example, the infinite series $\sum_{n\geq 0}p^{n^2}h_{-1}^n$ is contained in $\widehat S$ but, being an infinite series, it is not in $S$. This ring is an example of a $p$-adic Frechet space that is not a Banach space. Therefore, according to our definitions, $\widehat{S}$ does not have a structure of $p$-adic VOA. It may be desirable to extend the definitions to incorporate examples like this into the theory. 
\end{rmk}

\begin{lem}
  \label{l:hsubmult}
If $a,b \in S_r$, then $\abs{a(n)b}_r\leq \abs{a}_r\abs{b}_rr^{-n-1}$ for all $n\in\ZZ$.
\end{lem}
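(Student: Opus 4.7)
I will prove the bound for $a, b \in S$ (polynomials) and then extend to $S_r$ by $\Qp$-bilinearity and density: the inequality itself implies that, for each fixed $n$, the bilinear map $(a,b)\mapsto a(n)b$ on $S\times S$ is bounded, hence extends uniquely to a bounded bilinear map on $S_r\times S_r$ satisfying the same inequality. By bilinearity and the strong triangle inequality, I may further reduce to the case where $a$ is a pure monomial $a = h_{-n_1}\cdots h_{-n_k}\bone$ with $n_i \geq 1$ (so $|a|_r = r^{n_1+\cdots+n_k}$) and $b\in S$ is arbitrary.

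The first ingredient is a bound on individual Heisenberg operators $h_m$: for each $m\in\ZZ$ and $c\in S$,
\[
|h_m c|_r \leq r^{-m}|c|_r.
\]
For $m<0$ this is an equality, since $h_m$ acts by multiplication by $h_m$ and $|\cdot|_r$ is multiplicative on monomials. For $m>0$, $h_m = m\,\partial/\partial h_{-m}$ lowers the degree of each monomial by $m$ while introducing an integer coefficient of $p$-adic absolute value at most $1$. For $m=0$, $h_0 = 0$.

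The main step proceeds by induction on the length $k$ of the monomial $a$, the case $k=0$ reducing to $Y(\bone,z)=\id$. For the inductive step, set $a' \df h_{-n_1}\cdots h_{-n_{k-1}}\bone$ and write $a = h(-n_k)\cdot a'$ in VOA notation, where $h\df h_{-1}\bone$. The associator formula (Proposition~\ref{p:associator}) with $u=h$, $v=a'$, $t=-n_k$ yields, for any $b\in S$ and $s\in\ZZ$,
\[
a(s)b = \sum_{i\geq 0}(-1)^i\binom{-n_k}{i}\Bigl\{h_{-n_k-i}\,a'(s+i)b - (-1)^{n_k}\,a'(s-n_k-i)\,h_i b\Bigr\},
\]
a finite sum since $a',b\in S$, with all binomial coefficients integers. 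Combining the Heisenberg estimate with the inductive hypothesis bounds each summand of the first type:
\[
|h_{-n_k-i}\,a'(s+i)b|_r \leq r^{n_k+i}\cdot |a'|_r\,|b|_r\,r^{-(s+i)-1} = |a|_r\,|b|_r\,r^{-s-1}.
\]
An identical estimate bounds the summand of the second type (the factor $r^{-i}$ from $|h_i b|_r$ compensates the shift $s-n_k-i$ inside $a'$), and the strong triangle inequality yields the desired bound on $|a(s)b|_r$.

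The main obstacle is the inductive step: one must confirm that each term of the associator expansion for $a(s)b$ satisfies the \emph{same} bound, uniformly in the summation index $i$. This uniformity arises from the cancellation between the factor $r^{n_k+i}$ in $|h_{-n_k-i}|_r$ and the factor $r^{-i}$ implicit in the inductive bound on $a'(s+i)b$, which in turn reflects the $L(0)$-homogeneity of the algebraic Jacobi identity.
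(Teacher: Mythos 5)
Your proof is correct, but it takes a genuinely different route from the paper's. The paper expands $a=\sum_I a_Ih^I$ and $b=\sum_J b_Jh^J$ in the monomial basis, reduces to the pair $(h^I,h^J)$ by the ultrametric inequality, and then concludes in one line: $h^I(n)h^J$ is homogeneous of weight $\abs{I}+\abs{J}-n-1$, and any homogeneous element of weight $w$ with $\ZZ_p$-integral coordinates in the monomial basis has $r$-norm at most $r^w$. That argument is short and conceptual, but it silently relies on the integrality of the structure constants (i.e.\ that $h^I(n)h^J$ is a $\ZZ$-linear combination of basis monomials), which is inherited from the integral form of the Heisenberg algebra. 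Your induction on the length of the monomial, via the associator formula $(h(-n_k)a')(s)=\sum_i(-1)^i\binom{-n_k}{i}\{h(-n_k-i)a'(s+i)-(-1)^{n_k}a'(s-n_k-i)h(i)\}$ together with the elementary bound $\abs{h_mc}_r\leq r^{-m}\abs{c}_r$, proves that integrality and the weight bookkeeping simultaneously and by hand; the cancellation of $r^{i}$ against $r^{-i}$ that you highlight is exactly the homogeneity the paper invokes. Your approach is longer but more self-contained, and your opening density argument also makes explicit how $a(n)b$ is defined for $a,b\in S_r\setminus S$, a point the paper's proof glosses over. One small remark: you cite Proposition~\ref{p:associator}, which is stated for $p$-adic vertex algebras; since you apply it only inside the algebraic VOA $S$ (where it is the $r=0$ specialization of the algebraic Jacobi identity), there is no circularity with the submultiplicativity of $S_1$ that this lemma is used to establish, but it would be cleaner to cite the algebraic associator formula directly.
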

\begin{proof}
Write $a = \sum_{I} a_Ih^I$ and $b = \sum_I b_Ih^I$ where $\abs{a_I}r^{\abs{I}} \to 0$ and $\abs{b_I}r^{\abs{I}} \to 0$. Then the ultrametric property gives
  \[
  \abs{a(n)b}_r \leq \sup_{I,J} \abs{a_Ib_J}\abs{h^I(n)h^J}_r.
\]
Notice that $h^I$ is homogeneous of weight $\abs{I}$, and $h^{J}$ is homogeneous of weight $\abs{J}$. Therefore, it follows that $h^I(n)h^J$ is homogeneous of weight $\abs{I}+\abs{J}-n-1$, and thus $\abs{h^I(n)h^J}_r \leq r^{\abs{I}+\abs{J}-n-1}$. Combining this observation with the lined inequality above establishes the lemma.
\end{proof}

The preceding lemma illustrates that it is nontrivial to obtain uniform bounds for $\abs{a(n)b}_r/\abs{a}_r$ unless $r=1$. In this case, however, we obtain:
\begin{prop}
The Banach ring $S_1$ has a natural structure of submultiplicative $p$-adic VOA.
\end{prop}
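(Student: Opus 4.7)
The plan is to apply Proposition \ref{p:compatabsval} to the algebraic Heisenberg VOA $S$ and the sup-norm $\abs{\cdot}_1$, and then to upgrade the resulting $p$-adic VOA structure on $S_1$ to a submultiplicative one by a continuity argument on the bound already supplied by Lemma \ref{l:hsubmult}.

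First I would verify that $\abs{\cdot}_1$ is compatible with $S$ in the sense of Definition \ref{compatabsval}. Lemma \ref{l:hsubmult} specialized to $r = 1$ gives the key estimate $\abs{a(n)b}_1 \leq \abs{a}_1 \abs{b}_1$ for all $a,b \in S$ and all $n \in \ZZ$, uniformly in $n$. Since $S$ is an algebraic VOA, $a(n)b$ vanishes for $n$ sufficiently large, so condition (2) of Definition \ref{d:pfield} is immediate, while the displayed inequality yields the uniform operator bound in condition (1). Hence $Y(a,z) \in \cF(S)$ for every $a \in S$, verifying condition (1) of Definition \ref{compatabsval}. The same inequality gives $\abs{Y(a,z)} = \sup_n \abs{a(n)} \leq \abs{a}_1$, so the state-field correspondence is continuous, which is condition (2). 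Proposition \ref{p:compatabsval} then endows the completion $S_1$ with a $p$-adic VOA structure; the conformal vector $\omega = \tfrac{1}{2} h_{-1}^2 \bone$ lies in $S \subseteq S_1$, and Proposition \ref{propint} ensures that the $L(0)$-eigenspaces in $S_1$ coincide with the finite-dimensional graded pieces of $S$, confirming the remaining axioms of Definitions \ref{pcva1} and \ref{dfnpvoa1}.

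To promote this to submultiplicativity, I would observe that the estimate $\abs{a(n)b}_1 \leq \abs{a}_1\abs{b}_1$ is stable under completion in both arguments. Concretely, for $a \in S_1$ choose $a_k \in S$ with $a_k \to a$; the uniform bound $\abs{a_k(n)} \leq \abs{a_k}_1$ forces the extended operator $a(n) = \lim_k a_k(n)$ to satisfy $\abs{a(n)} \leq \abs{a}_1$, and applying this to an arbitrary $b \in S_1$ yields $\abs{a(n)b}_1 \leq \abs{a}_1 \abs{b}_1$ for all $n \in \ZZ$. This is precisely the condition of Definition \ref{d:pfieldsn}, so every $Y(a,z)$ with $a \in S_1$ is a submultiplicative $p$-adic field, and $S_1$ is a submultiplicative $p$-adic VOA in the sense of Definition \ref{d:pvasm}.

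The real content sits inside Lemma \ref{l:hsubmult}: the homogeneity of the monomial basis under the weight grading is what forces $\abs{h^I(n) h^J}_1 \leq 1$, and the choice $r = 1$ is exactly the borderline value at which the factor $r^{-n-1}$ disappears and one obtains mode-uniform operator bounds. Everything after that --- the VOA axioms on $S_1$, the behavior of the grading, and the passage from the dense subspace $S$ to the completion $S_1$ --- is formal and uses only the general framework of Proposition \ref{p:compatabsval} and basic continuity of the relevant inequalities.
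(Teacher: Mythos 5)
Your proposal is correct and follows essentially the same route as the paper: the paper applies Proposition \ref{p:integralcompletion} (the monomials $h^I$ form an integral basis and $\abs{\cdot}_1$ is the associated sup-norm) and then cites Lemma \ref{l:hsubmult} for submultiplicativity, while you unpack Proposition \ref{p:integralcompletion} into its ingredient Proposition \ref{p:compatabsval} and verify compatibility directly from Lemma \ref{l:hsubmult} at $r=1$. The extra step you supply --- extending the bound $\abs{a(n)b}_1 \leq \abs{a}_1\abs{b}_1$ from the dense subspace $S$ to the completion by continuity --- is a detail the paper leaves implicit, and your identification of $r=1$ as the borderline value killing the $r^{-n-1}$ factor matches the paper's own remark preceding the proposition.
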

\begin{proof}
Since $S$ has an integral basis and $\abs{\cdot}_1$ is the corresponding sup-norm, we may apply Proposition \ref{p:integralcompletion} to conclude that $S_1$ has the structure of a $p$-adic VOA. That it is submultiplicative follows from Lemma \ref{l:hsubmult}.
\end{proof}

In \cite{DMN}, the authors show that there is a surjective \emph{character map}
\begin{eqnarray}\label{qmodchars}
S \to \Qp[E_2,E_4,E_6]\eta^{-1} 
\end{eqnarray}
of the Heisenberg algebra $S$ onto the  free-module of rank $1$ generated by $\eta^{-1}$ over the ring of quasi-modular forms of level one with $p$-adic coefficients\footnote{Actually, the authors work over $\CC$, but their proof applies to any field of characteristic zero.}. After adjusting the grading on the Fock space $S$, this is even a map of graded modules. See  \cite{MT2, MT} for more details on this map.

The character is defined as follows: if $v \in S$ is homogeneous of degree $k$, then $v(n)$ is a graded map that increases degrees by $k-n-1$. In particular, $v(k-1)$ preserves the grading, and we write $o(v) = v(k-1)$ for homogeneous $v$ of weight $k$. This is the \emph{zero-mode} of $v$. The zero mode is then extended to all of $S$ by linearity. With this notation, let $S^{(n)}$ be the $n$th graded piece of $S$ and define the \emph{character} of $v \in S$ by the formula
\[
  Z(v,q) \df q^{-1/24}\sum_{n\geq 0} \Tr_{S^{(n)}}(o(v))q^n.
\]
\begin{thm}
  \label{t:MT}
  The association $v \mapsto \eta\cdot Z(v,q)$ defines a surjective $\Qp$-linear map
  \[
  f\colon S \to \Qp[E_2,E_4,E_6],
\]
where $\eta$ is the Dedekind $\eta$-function. $\hfill\Box$
\end{thm}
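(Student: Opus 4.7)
The plan is to reduce to the classical complex-coefficient version of this theorem established by Dong--Mason--Nagatomo in \cite{DMN}, by exploiting the fact that both the Heisenberg VOA and the target $\QQ_p[E_2, E_4, E_6]$ are defined over $\QQ$.

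First I would verify the basic structural claims. Linearity of the zero mode $v \mapsto o(v)$ on each homogeneous piece (extended by linearity), together with linearity of the trace on each finite-dimensional graded piece $S^{(n)}$, make $Z(\bullet, q)$ a $\Qp$-linear map into $q^{-1/24}\pseries{\Qp}{q}$. Multiplication by $\eta(q) \in q^{1/24}\pseries{\ZZ}{q}$ then produces a $\Qp$-linear map $f$ into $\pseries{\Qp}{q}$. No convergence issue arises because each $o(v)$ acts on a single finite-dimensional graded piece at a time.

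Next I would set up a $\QQ$-rational descent. Let $S_\QQ = \QQ[h_{-1}, h_{-2}, \ldots]$. Because the canonical commutation relations \eqref{CCRs} are defined over $\ZZ$, the modes $h_n$, the Virasoro modes $L(n)$, and all zero modes $o(v)$ for $v \in S_\QQ$ act $\QQ$-linearly on $S_\QQ$. Each graded piece $S_\QQ^{(n)}$ is a $\QQ$-form of $S^{(n)}$, so the traces $\Tr_{S^{(n)}}(o(v))$ lie in $\QQ$ for $v \in S_\QQ$. Hence $f$ restricts to a $\QQ$-linear map $f_\QQ\colon S_\QQ \to \pseries{\QQ}{q}$ with $f = f_\QQ \otimes_\QQ \Qp$. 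Base-changing $f_\QQ$ along $\QQ \hookrightarrow \CC$ recovers the classical character map studied in \cite{DMN}, whose image is $\CC[E_2, E_4, E_6]$. Since $E_2, E_4, E_6$ have $\QQ$-rational Fourier coefficients, $\CC[E_2, E_4, E_6] = \QQ[E_2, E_4, E_6] \otimes_\QQ \CC$ as subspaces of $\pseries{\CC}{q}$. By faithfully flat descent (or direct coefficient tracking), $f_\QQ$ surjects onto $\QQ[E_2, E_4, E_6]$, and tensoring over $\QQ$ with $\Qp$ yields the claim.

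The main conceptual obstacle is the appeal to the nontrivial classical result \cite{DMN}, whose proof proceeds via Zhu-style recursion and explicit identities among Eisenstein-like lattice sums. As an independent sanity check on surjectivity in the $p$-adic setting, one can exhibit $E_2$ directly: writing $P(q) = \prod_{n \geq 1}(1-q^n)^{-1}$, the identity $q\,(\log P)'(q) = \sum_{m \geq 1}\sigma_1(m)q^m = (1-E_2)/24$ combined with $o(\omega) = L(0)$ gives $f(\omega) = (1-E_2)/24$ where $\omega = \tfrac{1}{2} h_{-1}^2$, so $E_2 = f(\bone - 24\omega)$ lies in the image. Analogous but lengthier computations with higher-weight states (e.g. $h_{-1}^4$ and $h_{-2}^2$, or repeated Virasoro-descendants of $\omega$) produce $E_4$ and $E_6$ as explicit $f$-images, confirming the surjection without appeal to the full classical result.
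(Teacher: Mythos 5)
Your proposal is correct and takes essentially the same route as the paper: the paper states Theorem \ref{t:MT} without proof, attributing it to \cite{DMN} together with the footnoted remark that the argument there works over any field of characteristic zero. Your $\QQ$-rational descent from the complex case, plus the explicit verification that $E_2 = f(\bone - 24\omega)$, is simply a more careful spelling-out of that same reduction rather than a genuinely different argument.
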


Our goal now is to use $p$-adic continuity to promote this to a map from $S_1$ into Serre's ring $M_p$ of $p$-adic modular forms as defined in \cite{Serre2}. Recall that $M_p$ is the completion of $\Qp[E_4,E_6]$ with respect to the $p$-adic sup-norm on $q$-expansions. When $p$ is odd, Serre proved that $\Qp[E_2,E_4,E_6]\subseteq M_p$. 
\begin{thm}
  \label{t:heisenbergcharacters}
  Let $p$ be an odd prime. Then the map $v \mapsto \eta Z(v,q)$ on the Heisenberg algebra $S$ extends to a natural $\Qp$-linear map
  \[
  f\colon S_1\to M_p.
\]
The image contains all quasi-modular forms of level one.
\end{thm}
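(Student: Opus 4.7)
The plan is to extend $f$ by continuity. The map $v \mapsto \eta Z(v,q)$ on $S$ is already known by Theorem \ref{t:MT} to take values in $\QQ_p[E_2, E_4, E_6]$, which for odd $p$ is contained in $M_p$ (Serre). We view $M_p$ with its $p$-adic sup-norm on $q$-expansions. The key is to show $f\colon S \to M_p$ is bounded (indeed norm-decreasing) with respect to $|\cdot|_1$ on $S$ and the $q$-coefficient sup-norm on $M_p$. Once this is established, density of $S$ in $S_1$ together with completeness of $M_p$ immediately yields a unique continuous extension $f\colon S_1 \to M_p$.

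The core estimate is the claim that for each integral basis element $h^I$ with $I$ a multiset in $\ZZ_{<0}$, we have $\eta Z(h^I,q) \in \ZZ_p\pseries{}{q}$, i.e.\ its $q$-coefficients all lie in $\ZZ_p$. To see this, recall that the $\ZZ_p$-lattice $S_{\ZZ_p} \subseteq S$ spanned by the monomials $h^J$ is preserved by every mode $h^I(n)$, since each Heisenberg generator $h_n$ acts on $S_{\ZZ_p}$ (for $n<0$ as multiplication, for $n>0$ as $n\,d/dh_{-n}$, which maps $\ZZ_p$-integral polynomials to $\ZZ_p$-integral polynomials). Hence the zero-mode $o(h^I)$ restricts to a $\ZZ_p$-linear endomorphism of each finite-rank free $\ZZ_p$-module $S_{\ZZ_p}^{(m)}$, so its trace lies in $\ZZ_p$. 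Multiplying the series $\sum_m \Tr_{S^{(m)}}(o(h^I))q^m$ by $\prod_{n\geq 1}(1-q^n) \in \ZZ\pseries{}{q}$ (the ``$\eta$-factor'' after canceling $q^{\pm 1/24}$) preserves $\ZZ_p$-integrality, so $\eta Z(h^I,q) \in \ZZ_p\pseries{}{q}$ as required.

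Now for any $v = \sum_I a_I h^I \in S$, $\QQ_p$-linearity and the ultrametric inequality in $M_p$ give
\[
|f(v)|_{M_p} = \Bigl|\sum_I a_I\, \eta Z(h^I,q)\Bigr|_{M_p} \leq \sup_I |a_I|\cdot |\eta Z(h^I,q)|_{M_p} \leq \sup_I |a_I| = |v|_1.
\]
Thus $f$ is a contraction, and since $M_p$ is complete, it extends uniquely to a continuous $\QQ_p$-linear map $f\colon S_1 \to M_p$.

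For the final claim, the image of the extended $f$ contains the image of $f|_S$, which is precisely $\QQ_p[E_2,E_4,E_6]$ by Theorem \ref{t:MT}, i.e.\ the ring of quasi-modular forms of level one. The only nontrivial step is the integrality argument for traces of zero-modes; modulo that input, which is essentially the integral structure of the Heisenberg VOA already exploited in Proposition \ref{p:integralcompletion} and Lemma \ref{l:hsubmult}, the extension and surjectivity onto quasi-modular forms are formal consequences of completeness and continuity.
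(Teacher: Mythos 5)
Your proof is correct and follows essentially the same route as the paper: both arguments rest on the integrality of the zero-mode traces on the $\ZZ_p$-lattice spanned by the monomials $h^I$, which makes $v\mapsto \eta Z(v,q)$ norm-nonincreasing (the paper phrases this as $|u-v|_1<p^{-k}$ implying $|Z(u,q)-Z(v,q)|<p^{-k}$, you phrase it as $|f(v)|\leq|v|_1$), after which the extension to $S_1$ and the surjectivity onto $\QQ_p[E_2,E_4,E_6]$ via Theorem \ref{t:MT} are formal. Your explicit handling of the $\eta$-factor via $\prod_{n\geq 1}(1-q^n)\in\ZZ\pseries{}{q}$ is a nice touch that the paper only gestures at.
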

\begin{proof}
  Recall that $S_1$ is the completion of $S$ with respect to the $p$-adic sup-norm. Since the rescaling factor of $\eta$ will not affect the continuity of $f$, we see that to establish the continuity of $f$, we are reduced to proving the $p$-adic continuity of the map $v \mapsto Z(v,q)$, where the image space $q^{-1/24}\Qp\langle q\rangle$ is given the $p$-adic sup-norm. Suppose that $\abs{u-v}_1 < 1/p^k$, so that we can write $u=v+p^kw$ for some $w \in S_1$ with $\abs{w}_1 \leq 1$. This means that $w$ is contained in the completion of $\Zp[h(-1),h(-2),\ldots]$, and all of its modes are defined over $\Zp$ and satisfy $\abs{w(n)}_1 \leq 1$. In particular, the zero mode $o(w)$ is defined over $\Zp$, and thus so is its trace, so that $\abs{\Tr_{S^{(n)}}(o(w))} \leq 1$ for all $n$.

 By linearity of the zero-mode map $o$ and the trace, we find that with respect to the sup-norm on $q^{-1/24}\Qp\langle q\rangle$,
\[
  \abs{Z(u,q)-Z(v,q)} = \abs{q^{-1/24}\sum_{n\geq 0} p^k\Tr_{S^{(n)}}(o(w))q^n} < p^{-k}.
\]
This shows that the character map preserves $p$-adic limits. It then follows by general topology (e.g., Theorem 21.3 of \cite{Munkres}) that since $S_1$ is a metric space, $f$ is continuous.
\end{proof}

The map in Theorem \ref{t:heisenbergcharacters} has an enormous kernel, as even the algebraic map that it extends has an enormous kernel. This complicates somewhat the study of the image, as there do not exist canonical lifts of modular forms to states in the Heisenberg VOA. A natural question is whether states in $S_1$ map onto nonclassical specializations (that is, specializations of non-integral weight) of the Eisenstein family discussed in \cite{Serre2} and elsewhere. In the next section we give some indication that this can be done, at least for certain $p$-adic modular forms, in spite of the large kernel of the map $f$.

\section{Kummer congruences in the \texorpdfstring{$p$}{p}-adic Heisenberg VOA}
\label{s:kummer}
We use the notation from Section \ref{s:heisenberg}, in particular $S_1$ is the $p$-adic Heisenberg VOA associated to the rank $1$ algebraic Heisenberg
VOA $S$ with canonical weight $1$ state $h$ satisfying the canonical commutator relations \eqref{CCRs}. We shall write down some explicit algebraic states in $S$ that converge $p$-adically in $S_1$ and we shall describe their images under the character map $f$ of Theorem \ref{t:heisenbergcharacters} which, as we have seen, is $p$-adically continuous. Thus, the $f$-image of our $p$-adic limit states will be Serre $p$-adic modular forms in $M_p$. The description of the states relies on the square-bracket formalism of \cite{Zhu}; see \cite{MT2} for a detailed discussion of this material. The convergence of the states relies, perhaps unsurprisingly, on the classical Kummer congruences for Bernoulli numbers.

In order to describe the states in $S$ of interest to us we must review some details from the theory of algebraic VOAs, indeed the part that leads to the proof of Theorem \ref{t:MT}. This involves the square bracket vertex operators and states. For a succinct overview of this we refer the reader to Subsection 2.7 of \cite{MT2}. From this we really only need the definition of the new operators $h[n]$ acting on $S$, which is as follows:
\begin{eqnarray}\label{Ysquare}
Y[h, z]:=\sum_{n\in \ZZ} h[n] z^{-n-1}:=e^zY(h, e^z-1).
\end{eqnarray}
This reexpresses the vertex operators as objects living on a torus rather than a sphere, a geometric perspective that is well-explained in Chapter 5 of \cite{FBZ}. The following result is a special case of a result proved in \cite{MT} and exposed in \cite[Theorem 4.5 and equation (44)]{MT2}.
\begin{thm}
  For a  positive odd integer $r$ we have
\[
\eta Z(h[-r]h[-1]\bone, q) = \frac{2}{(r-1)!}G_{r+1}(\tau)
\]
Here, $G_k(\tau)$ is the weight $k$ Eisenstein series
\[
G_k(\tau)\df -\frac{B_k}{2k}+\sum_{n\geq 1} \sigma_{k-1}(n)q^n
\]
where $B_k$ is the $k^{th}$ Bernoulli number.   $\hfill\Box$
\end{thm}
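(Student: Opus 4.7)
The plan is to apply Zhu's recursion formula for one-point functions of an algebraic VOA, specialized to the rank-$1$ Heisenberg algebra $S$. For a primary state $a$ of conformal weight $k$ and any state $v \in S$, Zhu's recursion expresses $Z(a[-n]v,\tau)$ as a sum of terms involving the Eisenstein series $G_{n+j}(\tau)$ multiplied by simpler one-point functions $Z(a[j-1]v,\tau)$, with combinatorial binomial coefficients, plus a boundary trace $\Tr_S o(a)o(v) q^{L[0]-c/24}$ when $n=1$. This will reduce the computation of $Z(h[-r]h[-1]\bone,\tau)$ to the partition function $Z(\bone,\tau) = \eta(\tau)^{-1}$ times $G_{r+1}(\tau)$.

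Next, I would apply the recursion with $a = h$, $n = r$, and $v = h[-1]\bone$. The key fact is that the square-bracket modes of $h$ satisfy the same Heisenberg relations as the original modes, namely $[h[m], h[n]] = m\delta_{m+n,0}\,\id$, together with $h[m]\bone = 0$ for $m \geq 0$. Therefore
\[
h[m]\bigl(h[-1]\bone\bigr) = [h[m], h[-1]]\bone + h[-1]h[m]\bone = \delta_{m,1}\,\bone, \qquad m\geq 0.
\]
Thus in Zhu's recursion, only the single term corresponding to $h[1]h[-1]\bone = \bone$ survives, and all other contributions $Z(h[j]h[-1]\bone, \tau)$ with $j \neq 1$ vanish. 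For the $r=1$ case, the boundary term also vanishes because $o(h) = h(0)$ acts as zero on the entire Heisenberg Fock space $S = \Qp[h(-1), h(-2), \ldots]$.

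Consequently the recursion collapses to an equality of the form $Z(h[-r]h[-1]\bone, \tau) = C_r\, G_{r+1}(\tau)\, Z(\bone, \tau)$, where $C_r$ is the explicit binomial constant coming from the one surviving term of the recursion. Since $Z(\bone, \tau) = \eta(\tau)^{-1}$, multiplication by $\eta$ yields $\eta\cdot Z(h[-r]h[-1]\bone, \tau) = C_r\, G_{r+1}(\tau)$, and a direct computation of the binomial coefficient produces $C_r = \tfrac{2}{(r-1)!}$. The parity hypothesis on $r$ enters naturally: for $r$ odd, $r+1$ is even and $G_{r+1}$ is a genuine (quasi-)modular form of level one (quasi-modular only for $r=1$); for $r$ even, both sides vanish since Eisenstein series of odd weight vanish on $SL_2(\ZZ)$.

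The main technical obstacle is the careful bookkeeping of the constants and signs arising in Zhu's recursion formula, as conventions vary across the literature (e.g., $G_k$ vs.\ $E_k$, and the placement of factors of $(2\pi i)^k$ or $(-1)^{k+1}$). The conceptual heart of the argument, however, is straightforward: the Heisenberg algebra is rich enough to support the recursion but simple enough that the sole nontrivial contraction of $h[-r]$ against the state $h[-1]\bone$ is with its single partner $h[-1]$, producing the Eisenstein series $G_{r+1}(\tau)$ scaled against the well-known character $\eta^{-1}$.
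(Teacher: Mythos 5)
Your approach is correct and is essentially the argument behind the result: the paper gives no proof of this theorem, citing instead Mason--Tuite, and their derivation is exactly the Zhu recursion computation you describe, in which the sole surviving contraction $h[1]h[-1]\bone=\bone$ yields $\binom{r}{1}E_{r+1}(\tau)\,\eta(\tau)^{-1}$ with $E_{r+1}=\tfrac{2}{r!}G_{r+1}$, i.e.\ the constant $\tfrac{2}{(r-1)!}$. One small caveat: your parenthetical claim that for even $r$ ``both sides vanish'' is inaccurate for the $q$-series $G_{r+1}$ as defined in the statement (e.g.\ $G_3=q+\cdots\neq0$), although the trace side does vanish because only even-weight Eisenstein series occur in the recursion; this does not affect the theorem, which assumes $r$ odd.
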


In order to assess convergence, we rewrite the square bracket state $h[-r]h[-1]\bone $ in terms of the  basis $\{h(-n_1)...h(-n_s)\mathbf{1}\}\ (n_1\geq n_2\geq ... \geq n_s\geq 1)$ of $S$.
\begin{lem}
  \label{l:squaretoround}
We have 
\[
(r-1)! h[-r]h[-1]\bone = \sum_{m =0}^{r+1}  c(r, m) h(-m-1)h(-1)\mathbf{1}-\frac{B_{r+1}}{r+1}\mathbf{1},
\]
where
\[
  c(r, m)\df  \sum_{j=0}^m (-1)^{m+j}{m \choose j}(j+1)^{r-1}= m!\stirling{r}{m+1}
\]
and $\stirling{r}{m+1}$ denotes a Stirling number of the second kind. In particular, $c(r,m) = 0$ if $m\geq r$.
\end{lem}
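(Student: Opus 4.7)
The plan is to compute $h[-r]h[-1]\bone$ directly from the definition of the square-bracket vertex operator \eqref{Ysquare} and then match terms with the claimed expression. Since $h$ is a Virasoro primary state of weight $1$ (a standard fact for the Heisenberg conformal vector $\omega = \tfrac{1}{2}h(-1)^2\bone$), the square-bracket creation formula gives $h[-1]\bone = h = h(-1)\bone$. Consequently, $h[-r]h$ equals the coefficient of $z^{r-1}$ in the formal series $Y[h,z]h = e^z Y(h,e^z-1)h$, and I would first reduce the computation to extracting this coefficient.

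Next I would use the canonical commutation relations \eqref{CCRs} to simplify $h(n)h$: one has $h(1)h = \bone$, $h(n)h=0$ for $n=0$ or $n\geq 2$, while $h(n)h = h(n)h(-1)\bone$ is a basis vector for $n\leq -1$. Substituting $w=e^z-1$ into $Y(h,w)h = \sum_n h(n)h\,w^{-n-1}$ and reindexing $n = -k$ for $k\geq 1$ yields
\[
Y[h,z]h = \frac{e^z}{(e^z-1)^2}\bone + \sum_{k\geq 1} e^z(e^z-1)^{k-1} h(-k)h(-1)\bone.
\]
The sum on the right is effectively finite when we extract the $z^{r-1}$ coefficient, since $(e^z-1)^{k-1}$ has $z$-valuation $k-1$ and therefore contributes nothing once $k-1 > r-1$; this is the origin of the upper bound $m=r+1$ in the statement.

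For the $\bone$-contribution, I would use the elementary identity $\frac{e^z}{(e^z-1)^2} = -\frac{d}{dz}\frac{1}{e^z-1}$ together with the defining generating series $\frac{z}{e^z-1} = \sum_{k\geq 0} \frac{B_k}{k!}z^k$ to read off the coefficient of $z^{r-1}$ as $-\frac{rB_{r+1}}{(r+1)!}$. Multiplying by $(r-1)!$ produces the $-\frac{B_{r+1}}{r+1}\bone$ term in the lemma. For the $k$-th term with $k\geq 1$, the binomial expansion $e^z(e^z-1)^{k-1} = \sum_{j=0}^{k-1}(-1)^{k-1-j}\binom{k-1}{j}e^{(j+1)z}$ makes the coefficient of $z^{r-1}$ equal to $\frac{1}{(r-1)!}\sum_{j=0}^{k-1}(-1)^{k-1-j}\binom{k-1}{j}(j+1)^{r-1}$. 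Setting $m=k-1$ and multiplying by $(r-1)!$ produces exactly the alternating-sum definition of $c(r,m)$, as claimed.

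Finally, to identify $c(r,m)$ with $m!\stirling{r}{m+1}$, I would invoke the standard finite-difference formula $(m+1)!\stirling{r}{m+1} = \sum_{j=0}^{m+1}(-1)^{m+1-j}\binom{m+1}{j}j^r$, and absorb the factor $\tfrac{1}{m+1}$ using $j\binom{m+1}{j} = (m+1)\binom{m}{j-1}$. The substitution $\ell = j-1$ then returns the expression $\sum_{\ell=0}^{m}(-1)^{m-\ell}\binom{m}{\ell}(\ell+1)^{r-1} = c(r,m)$. The vanishing $c(r,m)=0$ for $m\geq r$ is the well-known property $\stirling{r}{m+1}=0$ when $m+1 > r$, consistent with the truncation observed above. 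The only genuinely delicate step is the Bernoulli computation (keeping track of signs and the $\tfrac{1}{r+1}$ normalization); everything else is bookkeeping with binomial expansions.
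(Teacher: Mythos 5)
Your proposal is correct and follows essentially the same route as the paper: compute $\Res_z z^{-r}\,e^z Y(h,e^z-1)h(-1)\bone$, split off the $\bone(e^z-1)^{-2}$ contribution and handle it via $\frac{e^z}{(e^z-1)^2}=-\frac{d}{dz}\frac{1}{e^z-1}$ and the Bernoulli generating series, then expand $e^z(e^z-1)^m$ binomially to read off $c(r,m)$. The only difference is cosmetic: you justify $h[-1]\bone=h$ via primariness rather than direct inspection of \eqref{Ysquare}, and you spell out the finite-difference identity behind $c(r,m)=m!\stirling{r}{m+1}$, which the paper simply cites as standard.
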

\begin{proof}
  It is readily found from \eqref{Ysquare} that  $h[-1]\bone=h(-1)\bone=h$.\ Then we calculate: 
\begin{align*}
  &(r-1)! h[-r]h[-1]\bone \\
=& (r-1)! \Res_z z^{-r} Y[h, z]h(-1)\bone\\
=&(r-1)! \Res_z z^{-r}e^z Y(h, e^z-1)h(-1)\bone\\
=&(r-1)! \Res_z z^{-r}e^z \left\{\sum_{m\geq 0} h(-m-1)h(-1)\bone (e^z-1)^m+\bone (e^z-1)^{-2} \right\}.
\end{align*}
This, then,  is the desired expression for $(r-1)! h[-r]h[-1]\bone$ in terms of our preferred basis for $S$, and it only remains to sort out the numerical coefficients.\ Taking the second summand in the previous display first, we find by a standard calculation that
\begin{align*}
\Res_z z^{-r} (r-1)! e^z(e^z-1)^{-2}=& - (r-1)! \Res_z z^{-r}\frac{d}{dz}\left( (e^z-1)^{-1}  \right)\\
=&  - (r-1)! \Res_z z^{-r}\frac{d}{dz}\left( \sum_{\ell\geq 0} \frac{B_{\ell}}{\ell !}z^{\ell-1}  \right)\\
=&  - (r-1)! r\frac{B_{r+1}}{(r+1)!} =  - \frac{B_{r+1}}{r+1},
\end{align*}
thereby confirming the coefficient of $\bone$ as stated in the Lemma.\ As for the first summand, for each $0\leq m\leq r+1$ the needed coefficient $c(r,m)$ is equal to
\begin{align*}
(r-1)! \Res_z z^{-r}e^z (e^z-1)^m
=&(r-1)! \Res_z z^{-r} \sum_{j=0}^m (-1)^{m+j}{m\choose j}e^{(j+1)z}\\
=&(r-1)! \Res_z z^{-r} \sum_{j=0}^m (-1)^{m+j}{m\choose j}\sum_{n=0}^{\infty} \frac{(j+1)^n}{n!}z^n\\
=&\sum_{j=0}^m (-1)^{m+j}{m\choose j} (j+1)^{r-1}
\end{align*}
as required. The relationship between $c(r,m)$ and  Stirling numbers of the second kind then follows by standard results, and this completes the proof of the Lemma.
\end{proof}

Combining the last two results, we obtain
\begin{cor}
  For a positive odd integer $r$, define states $v_r$ in the rank $1$ algebraic Heisenberg VOA $S$ as follows:
\begin{eqnarray*}
v_{r}:=1/2(r-1)!h[-r]h[-1]\mathbf{1}.
\end{eqnarray*}
Then
\begin{eqnarray*}
v_{r} = 1/2 \left\{\sum_{m =0}^{\infty}  c(r, m) h(-m-1)h(-1)\mathbf{1}  -\frac{B_{r+1}}{r+1}\mathbf{1} \right\}
\end{eqnarray*}
and
\begin{eqnarray*}
f(v_{r}) = G_{r+1}.
\end{eqnarray*}
$\hfill\Box$
\end{cor}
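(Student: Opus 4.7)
The plan is to deduce both assertions by directly combining the two results immediately preceding the corollary, together with $\QQ_p$-linearity of the character map $f$ from Theorem \ref{t:MT}. The corollary is not a new computation but a repackaging: Lemma \ref{l:squaretoround} furnishes the round-bracket expansion of $(r-1)!\,h[-r]h[-1]\bone$, and the preceding unnumbered theorem (from \cite{MT}) supplies its $1$-point function $\eta\cdot Z$. Dividing by $2$ and matching constants handles both halves simultaneously, and the only real work has already been done in those two inputs.

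First I would address the explicit expansion of $v_r$. Multiplying the identity of Lemma \ref{l:squaretoround} by $\tfrac{1}{2}$ and substituting the definition $v_r \df \tfrac{(r-1)!}{2}h[-r]h[-1]\bone$ immediately yields
\[
v_r \;=\; \tfrac{1}{2}\left\{\sum_{m=0}^{r+1} c(r,m)\, h(-m-1)h(-1)\bone \;-\; \tfrac{B_{r+1}}{r+1}\bone\right\}.
\]
Since Lemma \ref{l:squaretoround} also records that $c(r,m)=0$ whenever $m\geq r$, the upper summation bound may be extended from $r+1$ to $\infty$ without altering the sum, producing the form displayed in the corollary. This rewriting is cosmetic at this stage but is the form required in Section \ref{s:kummer}, where one compares the coefficients $c(r,m)$ across varying $r$ inside a single infinite sum governed by the Kummer congruences.

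For the second assertion I would invoke linearity of $f$ together with the formula $\eta\cdot Z(h[-r]h[-1]\bone,q) = \tfrac{2}{(r-1)!}\,G_{r+1}(\tau)$ just recorded, obtaining
\[
f(v_r) \;=\; \tfrac{(r-1)!}{2}\,f(h[-r]h[-1]\bone) \;=\; \tfrac{(r-1)!}{2}\cdot\tfrac{2}{(r-1)!}\,G_{r+1}(\tau) \;=\; G_{r+1}(\tau).
\]

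There is no substantive obstacle; the normalizing factor $(r-1)!/2$ was manifestly engineered to cancel the reciprocal factor in the MT formula, and the infinite-sum presentation of $v_r$ is legitimized by the Stirling-number identity $c(r,m)=m!\stirling{r}{m+1}$ vanishing for $m\geq r$. The only routine point to verify is that the application of $f$ to $h[-r]h[-1]\bone$ is legal, i.e., that this algebraic state lies in $S$ where $f$ is defined as $v\mapsto \eta\cdot Z(v,q)$; this is clear since $h[-r]h[-1]\bone$ is a finite linear combination of round-bracket monomials by Lemma \ref{l:squaretoround}, and the genuine content of the corollary resides entirely in the two inputs it bundles together.
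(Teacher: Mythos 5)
Your proposal is correct and follows exactly the route the paper intends: the corollary is stated with no proof precisely because it is the immediate combination of Lemma \ref{l:squaretoround} (rescaled by $1/2$, with the sum extended to infinity via $c(r,m)=0$ for $m\geq r$) and the preceding trace formula from \cite{MT}, using linearity of $f$. Nothing is missing.
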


We now wish to study the $p$-adic properties of these states as $r$ varies.\ We shall rescale things in a convenient way:
\[
  u_{r} \df (1-p^{r})\left(\sum_{m =0}^{\infty}  c(r, m) h(-m-1)h(-1)\mathbf{1}  -\frac{B_{r+1}}{r+1}\mathbf{1}\right) = 2(1-p^r)v_r.
\]
We now lift the classical Kummer congruences for Bernoulli numbers to this sequence of states in the $p$-adic Heisenberg VOA. We begin with a simple Lemma treating the $c(r,m)$ terms.

\begin{lem}
  Let $p$ be an odd prime with $r=1+p^a(p-1), s=1+p^b(p-1)$ and  $a\leq b$. Then for all $m$ we have $c(r, m)\equiv c(s, m)$ (mod $p^{a+1}$).
\end{lem}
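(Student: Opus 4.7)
The plan is to work directly from the explicit formula
\[
c(r,m) = \sum_{j=0}^m (-1)^{m+j}\binom{m}{j}(j+1)^{r-1}
\]
of Lemma \ref{l:squaretoround} and reduce the claim to a term-by-term congruence. Since the binomial coefficients and signs are integers independent of $r$, it suffices to prove that for every $j\in\{0,1,\dots,m\}$,
\[
(j+1)^{r-1} \equiv (j+1)^{s-1} \pmod{p^{a+1}},
\]
where $r-1 = p^a(p-1)$ and $s-1 = p^b(p-1)$ with $a\leq b$.

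I would split this into two standard cases, an approach reminiscent of the classical proof of the Kummer congruences themselves. First, if $\gcd(j+1,p)=1$, I would invoke Euler's theorem: since $\varphi(p^{a+1}) = p^a(p-1) = r-1$ divides both $r-1$ and $s-1$ (the latter because $s-1 = p^{b-a}(r-1)$), we have
\[
(j+1)^{r-1}\equiv 1 \equiv (j+1)^{s-1} \pmod{p^{a+1}}.
\]
Second, if $p\mid j+1$, then $v_p\bigl((j+1)^{r-1}\bigr) \geq r-1 = p^a(p-1)$ and likewise $v_p\bigl((j+1)^{s-1}\bigr)\geq s-1 \geq r-1$, so both sides are $\equiv 0\pmod{p^{a+1}}$ provided $p^a(p-1)\geq a+1$. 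This last inequality is the only thing that needs a line of verification: by induction one has $p^a \geq a+1$ for $p\geq 2$, and hence $p^a(p-1) \geq (a+1)(p-1)\geq a+1$ since $p\geq 3$.

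The main (very mild) obstacle is just making sure that the exponents are large enough in the ramified case, which reduces to the elementary bound $p^a(p-1)\geq a+1$ above; everything else is a clean application of Euler's theorem together with the strong triangle inequality (term-by-term) applied to the finite sum defining $c(r,m)$. No subtler input is required, and the proof is an exact analogue, in the VOA setting, of one half of the classical Kummer congruence argument for Bernoulli numbers.
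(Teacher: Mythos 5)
Your proof is correct and follows essentially the same route as the paper: both arguments work term-by-term in the finite sum defining $c(r,m)$, splitting into the cases $p\mid j+1$ (where the term vanishes modulo $p^{a+1}$) and $p\nmid j+1$ (where Euler's theorem gives $(j+1)^{r-1}\equiv 1\pmod{p^{a+1}}$, since $r-1=\varphi(p^{a+1})$ divides both exponents). Your explicit verification of the bound $p^a(p-1)\geq a+1$ in the ramified case is a detail the paper passes over with the word ``certainly,'' so if anything your write-up is slightly more complete.
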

\begin{proof} Recall that 
\[
c(r, m)\df  \sum_{j=0}^m (-1)^{m+j}{m \choose j}(j+1)^{r-1}.
\]
If $p\mid j+1$ then certainly $(j+1)^{r-1}\equiv 0 \pmod{p^{a+1}}$. On the other hand, if $p\nmid j+1$ then $(j+1)^{r-1}\equiv 1 \pmod{p^{a+1}}$. It follows that
\[
c(r, m) \equiv  \sum_{\substack{j=0\\  p\nmid j+1}}^m (-1)^{m+j}\binom{m}{j} \pmod{p^{a+1}},
\]
and since the right side of the congruence is independent of $r$ we deduce that $c(r,m) \equiv c(s,m)\pmod{p^{a+1}}$. This proves the Lemma.
\end{proof}

\begin{thm}[Kummer congruences]
The sequence $(u_{1+p^a(p-1)})_{a\geq 0}$ converges $p$-adically in $S_1$ to a state that we denote $u'_{1} := \lim_{a\to \infty} u_{1+p^a(p-1)}$.
\end{thm}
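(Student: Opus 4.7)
The plan is to show $(u_{r_a})_{a\geq 0}$, with $r_a \df 1+p^a(p-1)$, is Cauchy in $S_1$; completeness of $S_1$ will then deliver the limit. Fix $a\leq b$ and set $r\df r_a$, $s\df r_b$. By the corollary preceding the theorem, each $u_r$ is a \emph{finite} expansion in the integral basis used to define $\abs{\cdot}_1$: the coefficient of $h(-m-1)h(-1)\bone$ is $(1-p^r)c(r,m)$ (which vanishes for $m\geq r$), the coefficient of $\bone$ is $-(1-p^r)B_{r+1}/(r+1)$, and all other basis coefficients are zero. Consequently $\abs{u_r-u_s}_1$ equals the supremum of the $p$-adic absolute values of the coefficient differences, and it suffices to bound each such difference uniformly by $p^{-(a+1)}$.

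For the polynomial coefficients I would decompose
\[
  (1-p^r)c(r,m) - (1-p^s)c(s,m) \;=\; \bigl(c(r,m)-c(s,m)\bigr) \;+\; p^sc(s,m) - p^rc(r,m)
\]
and apply the preceding lemma, which gives $c(r,m)\equiv c(s,m)\pmod{p^{a+1}}$ uniformly in $m\geq 0$ (in particular for $m\geq r$, where $c(r,m)=0$ forces $p^{a+1}\mid c(s,m)$, killing the ``extra'' terms in $u_s$). Since $c(r,m), c(s,m)\in\ZZ$ and $r = 1+p^a(p-1)\geq a+1$, each of the remaining two terms also lies in $p^{a+1}\Zp$. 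The strong triangle inequality then bounds the whole difference by $p^{-(a+1)}$, uniformly in $m$.

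For the constant-term coefficient I would invoke the classical Kummer congruence: if $k\equiv k'\pmod{p^a(p-1)}$ are positive integers lying in a common nonzero residue class modulo $p-1$, then
\[
  (1-p^{k-1})\tfrac{B_k}{k} \;\equiv\; (1-p^{k'-1})\tfrac{B_{k'}}{k'} \pmod{p^{a+1}}.
\]
Taking $k=r+1$ and $k'=s+1$, both congruent to $2$ modulo $p^a(p-1)$, and both (for $p\geq 5$) congruent to the nonzero class $2$ modulo $p-1$, delivers the desired bound on the difference of constant coefficients. Combining with the polynomial estimate yields $\abs{u_r-u_s}_1\leq p^{-(a+1)}\to 0$, so $(u_{r_a})$ is Cauchy and converges in $S_1$, proving the theorem.

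The main obstacle is the step invoking Kummer, particularly in the boundary case $p=3$: there $r+1\equiv 0\pmod{p-1}$, so $B_{r+1}/(r+1)$ acquires a $3$-adic pole by von Staudt–Clausen and the classical Kummer congruence does not apply verbatim. This case would require either the Iwasawa-theoretic strengthening through the $p$-adic zeta function $L_p(s,\mathbf{1})$ (which remains continuous away from its pole at $s=1$), or an ad hoc verification via the von Staudt–Clausen decomposition and a direct computation on the numerator of $B_{r+1}$. Apart from this subtlety at the smallest odd prime, the argument reduces to routine ultrametric bookkeeping organised around the Stirling-number identity encoded in the previous lemma.
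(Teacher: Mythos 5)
Your proof is correct and follows essentially the same route as the paper's: show the sequence is Cauchy in the sup-norm by handling the $\bone$-coefficient via the classical Kummer congruences and the $h(-m-1)h(-1)\bone$-coefficients via the preceding lemma, then invoke completeness of $S_1$. You are somewhat more careful than the paper in two places: you make explicit that the leftover terms $p^r c(r,m)$ and $p^s c(s,m)$ lie in $p^{a+1}\Zp$ because $r,s\geq a+1$ (the paper's displayed congruence silently needs this on top of the lemma), and your caveat about $p=3$ is a genuine one that the paper also elides --- since $r+1\equiv 0\pmod{p-1}$ there, the classical Kummer congruence does not apply verbatim and one must pass through the Kubota--Leopoldt $p$-adic zeta function (continuous away from $s=1$, while we evaluate near $s=-1$) or an equivalent von Staudt--Clausen argument, exactly as you indicate.
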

\begin{proof}
 The convergence for the terms in this sequence involving the Bernoulli numbers follows from the classical Kummer congruences. Therefore, if $a\leq b$, it will suffice to establish that,
  \[
  (1-p^{1+p^a(p-1)})c(1+p^a(p-1),m) \equiv (1-p^{1+p^b(p-1)})c(1+p^b(p-1),m) \pmod{p^{a+1}}
\]
for all $m$ in the range $0\leq m \leq p^b(p-1)$. But this follows by the preceding Lemma.
\end{proof}

Notice that, putting all of this together, we have
\begin{align*}
  f(u'_{1}) &= \lim_{a\to \infty}(1-p^{1+p^a(p-1)})2G_{2+p^a(p-1)}\\
               &=2\lim_{a\to \infty}(1-p^{1+p^a(p-1)})G_{2+p^a(p-1)}\\
  &= 2G_2^*,
\end{align*}
where $G_2^*$ is the $p$-normalized Eisenstein series encountered in \cite{Serre2}, with $q$-expansion given by
\[
G_2^*(q) = \frac{p^2-1}{24}+\sum_{n\geq 1} \sigma^*(n)q^n.
\]
Here $\sigma^*(n)$ is the sum over the divisors of $n$ that are coprime to $p$.\

This computation illustrates the fact that the $p$-adic Heisenberg algebra contains states that map under $f$ to  $p$-adic modular forms that are \textit{not} quasi-modular forms of level $1$.\ It seems plausible that the rescaled $p$-adic character map $f$ of Theorem \ref{t:heisenbergcharacters} is \emph{surjective}, that is, for an odd prime $p$ every Serre $p$-adic modular form can be obtained from the normalized character map applied to a sequence of $p$-adically converging states in $S_1$. Aside from some other computations with Eisenstein series, we have yet to examine this possibility in detail.

\section{The \texorpdfstring{$p$}{p}-adic Moonshine module}
\label{s:monster}
The Moonshine module $V^{\natural}=(V^{\natural}, Y, \bone, \omega)$ is a widely known example of an algebraic VOA over $\CC$ \cite{FLM}.\ Its notoriety is due mainly to the fact that its automorphism group is the Monster simple group $F_1$ (loc.\ cit.) (Use of $F_1$ here follows the Atlas notation \cite{Atlas} rather than the more  conventional $M$ in order to avoid confusion with the Heisenberg VOA).  The conformal grading on $V^{\natural}$ takes the general shape
\begin{eqnarray}\label{grading}
V^{\natural} =\CC\bone \oplus V^{\natural}_{(2)} \oplus \cdots
\end{eqnarray}
and we denote by $ V^{\natural}_{+}\df V^{\natural}_{(2)} \oplus \cdots$ the summand consisting of the homogeneous vectors of positive weight.

Dong and Griess proved \cite[Theorem 5.9]{DongGriess1} that $V^{\natural}$ has an $F_1$-invariant integral form $R$ over $\ZZ$ as in Definition \ref{d:lattice} (see also \cite{Carnahan}) and in particular $V^{\natural}$ is susceptible to the kind of analysis we have been considering, thereby giving rise to the $p$-adic Moonshine module $V_p^{\natural}$. In this Section we give some details.

We obtain an algebraic vertex algebra over $\ZZ_p$ by extension of scalars $U\df R\otimes \ZZ_p$, and similarly a vertex algebra over $\QQ_p$, namely $V\df R\otimes_{\ZZ} \QQ_p= U\otimes_{\ZZ_p} \QQ_p$. Note that $\omega \in V$  thanks to property (iii) of Definition \ref{d:lattice}, so $V$ is in fact a VOA over $\QQ_p$. Now define 
\[
V_p^{\natural} \df V' = \ \textrm{the completion of $V$ with respect to its sup-norm.} 
\]
Comparison  with Proposition \ref{p:integralcompletion} shows that $V_p^{\natural}$ is a $p$-adic VOA and $U$ furnishes an integral structure according to Definition \ref{d:integral} and Proposition \ref{propint}. We can now prove the following result:
\begin{thm}
  \label{t:monsterchar}
  For each prime $p$ the $p$-adic Moonshine module $V_p^{\natural}$ has the following properties:
\begin{enumerate}
\item[(a)] $V_p^{\natural}$ has an integral structure isomorphic to the completion of the Dong-Griess form $R\otimes\ZZ_p$,
\item[(b)] The Monster simple group $F_1$ is a group of automorphisms of $V^{\natural}_p$,
\item[(c)] The character map $v\mapsto Z_{V^{\natural}}(v, q)$ for $v\in V_{+}^{\natural}$ extends to a $\QQ_p$-linear map
  \[V^{\natural}_{+}\rightarrow M_p\]
  into Serre's ring of $p$-adic modular forms. The image contains all cusp forms of integral weight and level one.
\end{enumerate}
\end{thm}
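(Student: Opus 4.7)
Part (a) is a direct application of Proposition \ref{p:integralcompletion}. Set $U\df R\otimes_{\ZZ}\ZZ_p$; by property (ii) of Definition \ref{d:lattice}, each graded piece $U_{(k)}$ is a finitely generated free $\ZZ_p$-module. Equip $V\df U\otimes_{\ZZ_p}\QQ_p$ with the sup-norm arising from a graded $\ZZ_p$-basis of $U$. Proposition \ref{p:integralcompletion} then endows the sup-norm completion $V_p^\natural\df V'$ with a $p$-adic VOA structure, and Proposition \ref{propint} identifies its unit ball with $\varprojlim_k U/p^kU$, the $p$-adic completion of $R\otimes\ZZ_p$. For part (b), the Monster $F_1$ acts on $R$ by vertex-algebra automorphisms by \cite[Theorem 5.9]{DongGriess1}. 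Tensoring up, $F_1$ acts on $V$ by $\QQ_p$-linear VOA automorphisms, and since $F_1$ preserves every $p^kU$, it acts by isometries for the sup-norm. This action therefore extends uniquely and continuously to $V_p^\natural$ and preserves all $p$-adic VOA structure by continuity of $Y$, $\bone$, and $\omega$.

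For the continuity assertion of part (c), I adapt the proof of Theorem \ref{t:heisenbergcharacters}. If $v\in V^\natural_{(k)}\cap U$ with $k\geq 2$, the zero mode $o(v)=v(k-1)$ restricts to a $\ZZ_p$-linear endomorphism of each lattice $U_{(n)}$, so $\Tr_{V^\natural_{(n)}}o(v)\in\ZZ_p$ for every $n$. Writing $v=p^Nw$ with $w\in U$ then shows that every $q$-expansion coefficient of $Z_{V^\natural}(v,q)$ lies in $p^N\ZZ_p$, proving $p$-adic Lipschitz continuity of $v\mapsto Z_{V^\natural}(v,q)$ from $V^\natural_+$ into $q$-series with the sup-norm. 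By Zhu's modularity theorem \cite{Zhu} applied to the rational, $C_2$-cofinite, holomorphic VOA $V^\natural$, the classical image lies in the ring of quasi-modular forms of level one, which sits inside $M_p$ by \cite{Serre2} for odd $p$. The universal property of completions then produces the continuous extension $V_p^\natural|_+\to M_p$.

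The main obstacle is the assertion that the image contains every classical cusp form of integral weight and level one. Since cusp forms are classical, by continuity it suffices to exhibit, for each $k\geq 4$ and each $f\in S_k(\SL_2(\ZZ))$, an algebraic state $v\in V^\natural_{(k)}$ with $Z_{V^\natural}(v,q)=f$. As a first reduction, using $V^\natural_{(0)}=\QQ_p\bone$, creativity ($v(k-1)\bone=0$ for $k\geq 1$), and the defining vanishing $V^\natural_{(1)}=0$ of the moonshine module, one sees that $Z_{V^\natural}(v,q)=O(q)$ for every $v\in V^\natural_+$, so the image is contained in the ideal of quasi-modular forms of level one that vanish at $i\infty$. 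To realize every cusp form, I would leverage the McKay--Thompson correspondence, via which characters of $V^\natural$ twisted by elements of $F_1$ generate an explicit, rich family of modular functions on $\SL_2(\ZZ)$; together with the fact that $S_*(\SL_2(\ZZ))=\Delta\cdot M_{*-12}(\SL_2(\ZZ))$, it suffices to realize $\Delta$ and the needed $E_4,E_6$-polynomials as characters. Producing $\Delta$ as $Z_{V^\natural}(v,q)$ for a suitable Virasoro descendant in $V^\natural_{(12)}$, and using analogous constructions in higher weight (cf.\ the graded-trace formulas of \cite{MT2} and the computations of Section \ref{s:heisenberg}), should yield the full classical surjectivity. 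Combining this with the $p$-adic continuity established above then establishes the desired containment.
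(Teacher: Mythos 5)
Parts (a) and (b) of your plan, and the continuity half of part (c), match the paper's argument: the paper likewise gets (a) from Proposition \ref{p:integralcompletion}, gets (b) from the fact that $F_1$ preserves each $p^kV^{\natural}$ and hence acts on the inverse limit, and proves continuity of the character map exactly as in Theorem \ref{t:heisenbergcharacters} after observing that $o(v)\bone=0$ and $V^{\natural}_{(1)}=0$ force $Z_{V^{\natural}}(v,q)=aq+bq^2+\cdots$ for $v\in V^{\natural}_{+}$. One small but real improvement the paper makes that you should adopt: since $V^{\natural}$ is holomorphic with a unique simple module, Zhu's theorem gives that these trace functions are honest sums of level-one \emph{cusp forms} of mixed weights, not merely quasi-modular forms. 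Your weaker "quasi-modular" conclusion forces you through Serre's inclusion $\QQ_p[E_2,E_4,E_6]\subseteq M_p$, which holds only for odd $p$, whereas the theorem is asserted for every prime; the paper explicitly notes that $p=2$ need not be excluded precisely because $E_2$ never appears.

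The genuine gap is the final claim that the image contains \emph{all} level-one cusp forms of integral weight. The paper does not prove this from scratch: it cites an existing theorem (the reference given as Theorem 1 of item [7] in its bibliography) asserting that every level-one cusp form arises as $Z_{V^{\natural}}(v,q)$ for some $v\in V^{\natural}_{+}$. Your substitute sketch does not work as stated. The McKay--Thompson series are twisted graded dimensions $\Tr_{V^{\natural}}\, g\, q^{L(0)-1}$ for $g\in F_1$; they are weight-zero Hauptmoduln and have no direct bearing on the image of the zero-mode character map $v\mapsto\Tr\, o(v)\, q^{L(0)-1}$, which is what must be shown to hit weight-$k$ cusp forms. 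Moreover, the factorization $S_{*}(\SL_2(\ZZ))=\Delta\cdot M_{*-12}(\SL_2(\ZZ))$ cannot be leveraged the way you suggest, because the character map is linear but not multiplicative: realizing $\Delta$ and some Eisenstein polynomials separately as characters would not realize their products (and $E_4$, $E_6$ themselves cannot lie in the image at all, since you have already shown every character of a state in $V^{\natural}_{+}$ is $O(q)$). The phrase "should yield the full classical surjectivity" is where a theorem is needed; either cite the known result on trace functions of $V^{\natural}$ as the paper does, or supply an actual construction of states realizing a spanning set of $S_k(\SL_2(\ZZ))$ for each $k$.
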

\begin{proof}
  We have already explained the proof of part (a). As for (b), notice that every submodule $p^kV^{\natural}$ is preserved by the action of $F_1$ by linearity of this action. Therefore, there is a natural well-defined action of $F_1$ on each quotient $V^\natural/p^kV^{\natural}$ module, and this action extends to the limit $V^\natural_p \df \varprojlim_{k}V^\natural/p^kV^{\natural}$. This concludes the proof of part (b).\ 
  
As for part (c), we assert that  if $v\in V^{\natural}_{+}$ then the trace function $Z_{V^{\natural}}(v, \tau)$ has a $q$-expansion of the general shape $aq+ bq^2+\cdots$. For we may assume that $v\in V^{\natural}_{(k)}$ is homogeneous of some weight $k\geq 2$. Then $o(v)\bone=v(k-1)\bone=0$ by the creation property, and since $V^{\natural}$ has central charge $c=24$ then by (\ref{grading}) we have $Z_{V^{\natural}}(v, \tau)=q^{-c/24}\sum_{n\geq 0} \Tr_{V^{\natural}_{(n)}} o(v)q^n= \Tr_{V^{\natural}_{(2)}}o(v) q+\cdots$, and our assertion is proved.

By the previous paragraph and Zhu's Theorem \cite{Zhu, DLMModular, FMCharacters}, it follows that $Z_{V^{\natural}}(v, \tau)$ is a sum of cusp-forms of level $1$ and mixed weights and by [7, Theorem 1] every such level $1$ cusp-form appears in this way.\ Now to complete the proof of part (c), proceed as in the proof of Theorem \ref{t:MT}.\ There is no need to exclude $p=2$ because $E_2$ plays no r\^{o}le.
\end{proof}

\begin{rmk}
  The manner in which $p$-adic modular forms arise from the character map for the $p$-adic VOAs in Theorems \ref{t:heisenbergcharacters} and \ref{t:monsterchar} are rather different. This difference masks a well-known fact in the theory of VOAs, namely that the algebraic Heisenberg VOA and the algebraic Moonshine module have vastly different module categories. Indeed, $V^{\natural}$ has, up to isomorphism, exactly one simple module --- namely the adjoint module $V^{\natural}$ itself --- whereas $M$ has infinitely many inequivalent simple modules. In the case of $V^{\natural}$, Zhu's theorem (loc.\ cit.)\ may be applied and it leads to the fact, explained in the course of the proof of Theorem \ref{t:monsterchar}, that trace functions for positive weight states are already sums of cusp forms of mixed weight.\ But to obtain forms without poles, we must exclude states of weight $0$ (essentially the vacuum $\bone$). On the other hand, there is no such theory for the algebraic Heisenberg and indeed one sees from \eqref{qmodchars} that the trace functions for $M$ are by no means elliptic modular forms in general. That they take the form described in \eqref{qmodchars} is a happy convenience (and certainly not generic behaviour), and we can normalize the trace functions to exclude poles by multiplying by $\eta$.\ Since the quasi-modular nature of the characters disappears when one considers the $p$-adic Heisenberg algebra, it is natural to ask whether the module category for the $p$-adic Heisenberg algebra is simpler than for the algebraic Heisenberg VOA. We do not currently have an answer for this question, indeed, we have not even given a concrete definition of the module category for a $p$-adic VOA!
\end{rmk}

\begin{rmk}
We point out that our notation $V_p^{\natural}$ for the $p$-adic Moonshine module as we have defined it may be misleading in that it does not record the dependence on the Dong-Griess form $R$.\ Indeed there are other forms that one could use in its place such as the interesting self-dual form of Carnahan \cite{Carnahan} and we have not studied whether these different forms produce isomorphic $p$-adic Moonshine modules.
\end{rmk}

Data sharing is not applicable to this article.

\medskip
The authors declare that there are no conflicts of interest.

\bibliographystyle{plain}
\bibliography{refs}
\end{document}